
\documentclass[12pt]{amsart}
\DeclareOption{A4}
   {\setlength\paperheight {297mm}%
    \setlength\paperwidth  {210mm}}
\ExecuteOptions{A4} \ProcessOptions \RequirePackage{amsmath}
\RequirePackage{amssymb} \RequirePackage{amsthm}
\RequirePackage{epsfig} \setlength\oddsidemargin  {14pt}
\setlength\evensidemargin {16pt} \setlength\marginparwidth {60pt}
\setlength\hoffset{7mm} \setlength\voffset{5mm}
\setlength\lineskip{1pt} \setlength\normallineskip{1pt}
\setlength\overfullrule{0pt} \setlength\textwidth{32pc}
\setlength\textheight{46\baselineskip} \setlength\headsep {9pt}

\setlength\parindent{1em} \setlength\parskip{0pt}

       \def\b{\beta}        
            \def\om{\omega} 
              
                  \def\z{\zeta}
         
\def\a{\alpha}
\def\ep{\varepsilon}

\def\z{\zeta}

\def\D{{\mathbb D}}  
  \def\N{{\mathbb N}}

\def\Xpa{{ X^p_{\alpha}}}

\def \Da {{\mathcal{D}_{\alpha}}}

\def\({\left(}       \def\){\right)}

\newcommand{\ig}{\stackrel{\text{def}}{=}}

\newcounter{capa}

\newtheorem{thm}{\sc Theorem}[section]

\newtheorem{prop}[thm]{\sc Proposition}
\newtheorem{lem}[thm]{\sc Lemma}
\newtheorem{mthm}{\sc Theorem}

\newtheorem{other}{\sc Theorem}              
\newtheorem{otherl}[other]{\sc Lemma}

\numberwithin{equation}{section}
\begin{document}
\title
[Schatten classes  ] {Schatten classes of integration operators on
Dirichlet spaces}

\date{April 16, 2012}

\keywords{Dirichlet spaces, Schatten classes, integration operators, Toeplitz operators}


\author[Jordi Pau]{Jordi Pau}
\address{Jordi Pau \\Departament de Matem\`{a}tica Aplicada i Analisi\\
Universitat de Barcelona\\
Gran Via 585 \\
08007 Barcelona\\
Spain} \email{jordi.pau@ub.edu}

\author{Jos\'e \'Angel Pel\'aez}
\address{Jos\'e \'Angel Pel\'aez\\ Departamento de An\'alisis
Matem\'atico\\
Universidad de M\'alaga\\ Campus de Teatinos\\ 29071 M\'alaga\\
Spain}\email{japelaez@uma.es}

\thanks{The first author is
 supported by SGR grant $2009$SGR $420$ (Generalitat de
Catalunya) and DGICYT grant MTM$2011$-$27932$-$C02$-$01$
(MCyT/MEC), while the second author is supported by: the Ram\'on y Cajal program
of MICINN (Spain), \lq\lq the Ministerio de Edu\-ca\-ci\'on y
Ciencia, Spain\rq\rq\, (MTM2011-25502) and from \lq\lq La Junta de
Andaluc{\'\i}a\rq\rq\, (FQM210), (P09-FQM-4468)}

\begin{abstract}
We address the question of describing the membership to Schatten-Von Neumann ideals $\mathcal{S}_ p$ of integration operators  $(T_ g f)(z)=\int_{0}^{z}f(\zeta)\,g'(\zeta)\,d\zeta$ acting on Dirichlet type spaces. We also study this problem for multiplication, Hankel and Toeplitz operators. In particular, we  provide   an extension of Luecking's result on Toeplitz  operators \cite[p. 347]{LJFA87}.
\end{abstract}
\maketitle

\section{Introduction and main results}
\par
  Let $\D$ denote the open unit disk of the complex
plane, and let $H(\D)$ be the class
of all analytic functions on $\D$. For $\alpha>-1$, let
\[dA_{\alpha}(z)=(\alpha+1)\,(1-|z|^2)^{\alpha}\,dA(z),\]
where $dA(z)=\frac{1}{\pi}\,dx\,dy$ is the normalized area measure on $\D$. For $\alpha\ge 0$, the weighted
Dirichlet-type space $\Da$ consists of those functions $f\in
H(\D)$ for which
\begin{displaymath}
\|f\|_{\Da}^2\ig
|f(0)|^2+\int_{\D}|f'(z)|^2\,dA_{\alpha}(z)<\infty,
\end{displaymath}
 Note that the space $\mathcal{D}_ 0$ is just the classical Dirichlet space and, as usual, will be simply denoted by $\mathcal{D}$. The spaces $\Da$ are reproducing kernel Hilbert spaces: for each $z\in \D$, there are functions $K^{\alpha}_ z\in \Da$ for which the reproducing formula $f(z)=\langle f,K^{\alpha}_ z\rangle _{\Da}$ holds, where the inner product in $\Da$ is given by
\[\langle f,g\rangle_{\Da} \ig f(0)\overline{g(0)}+\int_{\D}f'(z)\,\overline{g'(z)}\,dA_{\alpha}(z).\]
For $0<p<\infty$, we shall also write $A^p_\alpha$ for the weighted
 Bergman space of those $g\in H(\D)$ such that
 $$\|g\|^p_{A^p_\alpha}=\int_\D|g(z)|^p\,dA_{\alpha}(z)<\infty.$$

\par   Here we put our attention on the study of the integration operator $T_ g$ and the multiplication operator $M_g$
defined by
\begin{equation*}\begin{split}
&(T_ g f)(z)=\int_{0}^{z}f(\zeta)\,g'(\zeta)\,d\zeta,
\\ & M_g(f)=g(z)f(z).
\end{split}\end{equation*} where $g$ is an analytic function on $\D$.
The bilinear operator $\left(f,g\right)\rightarrow \int f\,g'$  was introduced by  A. Calder\'on in harmonic analysis in the $60$'s for his research on commutators of singular integral operators \cite{Calderon65} (see also \cite[p.1136]{SteinNotAmer}). After that, it and different variations going by the name of \lq\lq paraproducts\rq\rq, have been extensively studied, becoming fundamental tools in harmonic analysis.
Pommerenke was probably one of the first authors of the complex function theory community to consider the operator $T_g$ \cite{Pom}. After the pioneering works of Aleman and Siskakis \cite{AS0,AS}, the study of the operator $T_ g$ on several spaces of analytic functions
has attracted a lot of attention in recent years (see \cite{A,AC,PP,PelRat,SiS,SZ}).

\par Our main goal is to study the membership in the Schatten-Von Neumann ideals $\mathcal{S}_ p$ of the integration operator $T_ g:\Da\rightarrow \Da$.
  If $\alpha>1$, $\Da$ is nothing else but $A^2_{\alpha-2}$ and $\mathcal{D}_1=H^2$, the classical Hardy space, so   for $p>1$, then $T_g\in\mathcal{S}_ p(\Da)$ if and
only if $g$ belongs to the  Besov space $B_p$, and if
$0<p\le 1$, then $T_g\in \mathcal{S}_ p(\Da)$ if and only if $g$ is
constant (see \cite{AS0,AS}).
 We recall that, for $p>1$, the Besov space $B_p$ is the space of all analytic
functions $g$ in $\D$ such that
$$\int_{\D}|g'(z)|^p(1-|z|^2)^{p}\,d\lambda(z)<\infty,$$
where $d\lambda(z)=\frac{dA(z)}{(1-|z|^2)^2}$ is the hyperbolic measure on $\D$.
\par  The following result is implicit in the literature (see \cite{Wuark93}) and can be proved by  using the theory of Toeplitz operators (see Section \ref{operators}).

\begin{other}\label{th:1}
Let  $g\in H(\D)$. We have the following:
\begin{enumerate}
\item[(a)] Let $0<\alpha<1$ and $p>1$ with  $p(1-\alpha)<2$. Then $T_g\in S_p(\Da)$
if and only if $g$ belongs to  $B_p$.
\item[(b)]
If $0<p\le 1$ and $0<\alpha<1$, then
$T_g\in S_p(\Da)$ if and only if $g$ is constant.

\end{enumerate}
\end{other}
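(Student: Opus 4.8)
The plan is to reduce the statement to the description of Schatten-class Toeplitz operators on $\Da$ carried out in Section~\ref{operators}. Since $(T_g f)(0)=0$ and $(T_g f)'=f\,g'$, for every $f\in\Da$ we have
\[
\|T_g f\|_{\Da}^2=\int_{\D}|f(z)|^2|g'(z)|^2\,dA_{\alpha}(z)=\int_{\D}|f|^2\,d\mu_g,\qquad d\mu_g(z):=|g'(z)|^2\,dA_{\alpha}(z),
\]
and therefore $\langle T_g^*T_g f,h\rangle_{\Da}=\int_{\D}f\,\overline{h}\,d\mu_g=\langle T_{\mu_g}f,h\rangle_{\Da}$, where $T_{\mu_g}$ is the Toeplitz operator on $\Da$ with positive symbol $\mu_g$. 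Hence $T_g^*T_g=T_{\mu_g}$, and since $T_g\in S_p(\Da)$ if and only if $T_g^*T_g\in S_{p/2}(\Da)$, we conclude that $T_g\in S_p(\Da)\iff T_{\mu_g}\in S_q(\Da)$ with $q=\tfrac p2$. Observe that the hypothesis $p(1-\alpha)<2$ of part (a) is precisely the condition $q(1-\alpha)<1$, and that this condition holds automatically under the hypothesis $0<p\le1$ of part (b).

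Next I would apply the characterization of $S_q$-Toeplitz operators on $\Da$ from Section~\ref{operators} (see also \cite{Wuark93}), which plays here the role of Luecking's theorem: for $0<\alpha<1$ and $0<q<\infty$ with $q(1-\alpha)<1$, a positive Borel measure $\mu$ on $\D$ that is a Carleson measure for $\Da$ induces a Toeplitz operator $T_\mu$ which belongs to $S_q(\Da)$ if and only if
\[
\int_{\D}\left(\frac{\mu\bigl(D(z,r)\bigr)}{(1-|z|^2)^{\alpha}}\right)^{q}d\lambda(z)<\infty,
\]
where $D(z,r)$ is a fixed hyperbolic disk. (The normalization $(1-|z|^2)^{-\alpha}$ is the right one because the reproducing kernel of $\Da$ satisfies $K^{\alpha}_z(w)\asymp(1-\bar z w)^{-\alpha}$ for $0<\alpha<1$, so that $\|K^{\alpha}_z\|_{\Da}^2\asymp(1-|z|^2)^{-\alpha}$ and the displayed integrand is comparable to the Berezin transform of $\mu$.) Taking $\mu=\mu_g$ and using that $(1-|w|^2)\asymp(1-|z|^2)$ on $D(z,r)$ together with the standard sub-mean value estimates for $|g'|^2$, one gets $\mu_g\bigl(D(z,r)\bigr)(1-|z|^2)^{-\alpha}\asymp|g'(z)|^2(1-|z|^2)^2$ in the relevant averaged sense, and hence
\[
\int_{\D}\left(\frac{\mu_g(D(z,r))}{(1-|z|^2)^{\alpha}}\right)^{q}d\lambda(z)\asymp\int_{\D}|g'(z)|^{2q}(1-|z|^2)^{2q-2}\,dA(z)=\int_{\D}|g'(z)|^{p}(1-|z|^2)^{p-2}\,dA(z).
\]
For $p>1$ the last quantity is comparable to $\|g\|_{B_p}^p$, which gives (a).

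For part (b), the same equivalences show that, when $0<p\le1$, $T_g\in S_p(\Da)$ if and only if $\int_{\D}|g'(z)|^{p}(1-|z|^2)^{p-2}\,dA(z)<\infty$, so it only remains to see that this forces $g$ to be constant. If $g$ were non-constant then $g'\not\equiv0$, and the means $M_p^p(r,g')=\frac1{2\pi}\int_0^{2\pi}|g'(re^{i\theta})|^p\,d\theta$ are non-decreasing in $r$ (by subharmonicity of $|g'|^p$) and strictly positive; fixing $r_0\in(0,1)$ with $c:=M_p^p(r_0,g')>0$ and integrating in polar coordinates,
\[
\int_{\D}|g'(z)|^{p}(1-|z|^2)^{p-2}\,dA(z)\ge 2\int_{r_0}^{1}M_p^p(r,g')\,(1-r^2)^{p-2}r\,dr\ge c\int_{r_0}^{1}(1-r^2)^{p-2}\,2r\,dr=+\infty
\]
because $p-2\le-1$; this contradiction shows $g$ is constant, and (b) follows.

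The only step that requires genuine work is the Toeplitz characterization invoked in the second paragraph, and it is precisely there that the restriction $q(1-\alpha)<1$ (equivalently $p(1-\alpha)<2$) is needed: besides the kernel estimate $K^{\alpha}_z(w)\asymp(1-\bar z w)^{-\alpha}$, its proof requires Luecking-type arguments — an $\ell^q$ criterion over a separated sequence when $q\le1$, a Berezin-transform criterion when $q\ge1$ — adapted to the non-Bergman weight $(1-|z|^2)^{\alpha-2}$ attached to $\|K^{\alpha}_z\|_{\Da}^2$, and $q(1-\alpha)<1$ is exactly the threshold that keeps the off-diagonal part of the Gram matrix $\bigl(\langle T_\mu\widehat k_{a_i},\widehat k_{a_j}\rangle\bigr)_{i,j}$ under control. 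Everything else in the argument is routine.
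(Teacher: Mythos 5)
Your proposal follows essentially the same route the paper intends for Theorem~\ref{th:1}: factor $T_g^*T_g=Q_{\mu_g}$ with $d\mu_g=|g'|^2\,dA_{\alpha}$, use $T_g\in\mathcal{S}_p\iff Q_{\mu_g}\in\mathcal{S}_{p/2}$, and invoke Luecking's trace-ideal criterion \eqref{LC} (valid since $p(1-\alpha)<2$ gives $\tfrac p2(1-\alpha)<1$), translating the lattice/averaged condition into the $B_p$ integral by standard subharmonicity estimates, with the divergence of $\int_{\D}|g'|^p(1-|z|^2)^{p-2}\,dA$ handling $0<p\le 1$. The steps you leave as sketches (equivalence of the Berezin-type integral and the lattice sum, and the Luecking/Aleman--Siskakis sub-mean-value comparison) are the routine ones the paper likewise takes for granted, so the argument is correct.
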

\medskip \par However for $0<\alpha<1$ and $p(1-\a)\ge 2$, to the best of our knowledge, it  is  an open problem founding a description of those $g\in H(\D)$ such that $T_g\in S_p(\Da)$.
This motivation leads us to introduce for $0\le \a<\infty$ and $1<p<\infty$, the space $X^p_{\alpha}$ which consists of those $g\in H(\D)$ such that
\begin{equation}\label{eq:xpa}
||g||^p_{X^p_{\alpha}} \ig |g(0)|^p +\int_{\D}\!\left (\!(1-|w|^2)^\alpha \!\!\int_{\D}\frac{|g'(z)|^2\, dA_{\a}(z)}{|1-\bar{w}z|^{2+2\a}} \right)^{p/2}\!\!\!\!(1-|w|^2)^{p-2}dA(w)<\infty.
\end{equation}

\par The following result gives a description of the membership in $\mathcal{S}_ p(\Da)$ in the range $p>1$ and $p(1-\alpha)<4$.

\begin{mthm}\label{th:nuevo}
Let $0<\alpha<1$, $g\in H(\D)$ and $p>1$ with  $p(1-\alpha)<4$. Then $T_g\in S_p(\Da)$
if and only if $g$ belongs to  $X^p_{\alpha}$.
\end{mthm}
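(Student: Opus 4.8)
The plan is to reduce the statement to the Schatten‑class description of Toeplitz operators on $\Da$ established in Section~\ref{operators}, via the elementary identity
\[
\|T_gf\|_{\Da}^2=\int_\D|f(z)|^2\,|g'(z)|^2\,dA_\alpha(z)=\int_\D|f(z)|^2\,d\mu_g(z),\qquad d\mu_g:=|g'|^2\,dA_\alpha,
\]
which holds because $(T_gf)(0)=0$ and $(T_gf)'=f\,g'$. One may assume $T_g$ is bounded on $\Da$: otherwise $T_g\notin S_p(\Da)$, and since finiteness of $\|g\|_{X^p_\alpha}$ forces $\mu_g$ to be a Carleson measure for $\Da$ — hence $T_g$ bounded — one also has $g\notin X^p_\alpha$, so the equivalence is trivial. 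When $T_g$ is bounded, $\mu_g$ is a Carleson measure for $\Da$, the Toeplitz operator $T_{\mu_g}$ is bounded on $\Da$, and comparing the quadratic forms in the identity above yields $T_g^*T_g=T_{\mu_g}$. As $T_g^*T_g=|T_g|^2$, the singular numbers obey $s_n(T_g)^2=s_n(T_{\mu_g})$, so that $T_g\in S_p(\Da)$ if and only if $T_{\mu_g}\in S_{p/2}(\Da)$.

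Next I would invoke the Luecking‑type Schatten description of $T_\mu$ on $\Da$ from Section~\ref{operators}. Writing $q=p/2$, the hypothesis $p(1-\alpha)<4$ is exactly $q(1-\alpha)<2$, which is the range in which that description is valid. Specialized to $\mu=\mu_g$, and after writing $d\lambda(a)=(1-|a|^2)^{-2}\,dA(a)$ and using the bookkeeping identity $q(2+\alpha)-2=q\alpha+(2q-2)$ (recall $2q-2=p-2$), its characterizing condition reads
\[
\int_\D\left((1-|a|^2)^{\alpha}\int_\D\frac{|g'(z)|^2\,dA_\alpha(z)}{|1-\bar a z|^{2+2\alpha}}\right)^{p/2}(1-|a|^2)^{p-2}\,dA(a)<\infty,
\]
i.e.\ $\|g\|_{X^p_\alpha}^p<\infty$ (the term $|g(0)|^p$ in the definition of $X^p_\alpha$ being irrelevant, since $T_g$ depends only on $g'$). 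Together with the previous paragraph this gives $T_g\in S_p(\Da)\iff g\in X^p_\alpha$. If the characterization in Section~\ref{operators} is instead phrased through the Berezin transform of $T_\mu$, or through a discrete sum over a separated lattice $\{a_k\}$, one inserts here the routine lemma reconciling that formulation with the integral above (a Fubini plus Forelli--Rudin computation, together with the subharmonicity of $a\mapsto\int_\D|1-\bar a z|^{-(2+2\alpha)}\,d\mu_g(z)$).

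The essential difficulty is therefore carried entirely by the quoted ingredient — the extension of Luecking's theorem to Toeplitz operators on $\Da$ with $0<\alpha<1$ — and it is in its proof that the restriction $q(1-\alpha)<2$, equivalently $p(1-\alpha)<4$, genuinely enters; beyond it the whole approach collapses, which is why the case $p(1-\alpha)\ge 4$ remains open. Within the present argument the only points needing attention are: that boundedness of $T_g$ really does imply $\mu_g$ is a Carleson measure for $\Da$, so that $T_{\mu_g}$ is an admissible symbol; and the careful tracking of exponents so that the measure‑theoretic condition becomes the integral defining $X^p_\alpha$. As a consistency check one should verify that for $p(1-\alpha)<2$ the space $X^p_\alpha$ coincides with $B_p$: then $q(1-\alpha)<1$, the integral condition reduces to $\{\mu_g(D(a_k,r))/(1-|a_k|^2)^\alpha\}_k\in\ell^{p/2}$ over a lattice, and for this measure that is $\int_\D|g'(z)|^p(1-|z|^2)^{p-2}\,dA(z)<\infty$; so Theorem~\ref{th:nuevo} indeed contains Theorem~\ref{th:1}(a).
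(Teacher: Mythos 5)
Your reduction $T_g\in\mathcal{S}_p(\Da)\Longleftrightarrow Q_{\mu_g}\in\mathcal{S}_{p/2}(\Da)$, $d\mu_g=|g'|^2\,dA_\alpha$, and the exponent bookkeeping identifying the resulting condition with $\|g\|_{\Xpa}<\infty$, are correct; this is exactly the link the paper itself records in Section \ref{operators}. The gap is the ingredient you then invoke. Luecking's theorem \eqref{LC} describes $\mathcal{S}_q(\Da)$ membership of Toeplitz operators only for $q(1-\alpha)<1$; with $q=p/2$ this is $p(1-\alpha)<2$, so from the literature your argument recovers only Theorem \ref{th:1}(a), not the new range $2\le p(1-\alpha)<4$. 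The ``Luecking-type description of $T_\mu$ on $\Da$'' in the range $q(1-\alpha)<2$ that you quote from Section \ref{operators} is Theorem \ref{th:TC} of this very paper, and its proof is obtained by repeating the proof of the sufficiency of Theorem \ref{th:nuevo} with $|g'|^2\,dA_\alpha$ replaced by $d\mu$ (together with Proposition \ref{pr:gen} and \eqref{LC} for the remaining ranges); invoking it to prove Theorem \ref{th:nuevo} is therefore circular. In other words, you have correctly performed the soft reduction, but the entire new content of the theorem --- the sufficiency for $2<p\le 4$ via the estimate of $|e_n^2(z)-e_n^2(0)|$ through the reproducing formula of $\mathcal{D}_{1+2\alpha}$, the case $4<p<4/(1-\alpha)$ with the auxiliary parameters $\varepsilon,\beta$ and Lemma \ref{Ict}, and the necessity coming from Propositions \ref{pr:gen} and \ref{nece} together with the identification $\Xpa=B_p$ when $p(1-\alpha)<2$ (Proposition \ref{pr:xpaproperties}(iv)) --- sits inside the quoted black box and is nowhere supplied.

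A secondary point: your dismissal of the unbounded case rests on the claim that $\|g\|_{\Xpa}<\infty$ forces $\mu_g$ to be a Carleson measure for $\Da$, hence $T_g$ bounded. For $0<\alpha<1$ Carleson measures for $\Da$ are characterized by capacitary (Stegenga-type) conditions, not by the one-box or Berezin-type quantity implicit in $\Xpa$, so this step also requires a proof (in the paper it is never needed: the Schatten estimates are run directly on orthonormal sets, and boundedness comes out as a by-product of $T_g\in\mathcal{S}_p$). This is minor compared with the circularity above, but it is a further place where your outline leans on the conclusion rather than on available results.
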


\par Now we are going to deal with the case of the classical Dirichlet space $\mathcal{D}$. The situation here it seems to be more difficult. First of all, it is easy (and well known) to describe when the operator $T_ g$ belongs to the Hilbert-Schmidt class $\mathcal{S}_ 2(\mathcal{D})$. Indeed, for any orthonormal basis $\{e_ n\}$ of the Dirichlet space, one has (see Section \ref{s2})
\begin{equation}\label{HS}
\begin{split}
\|T_ g\|^2_{\mathcal{S}_ 2}=\sum_ n \|T_ ge_ n\|_{\mathcal{D}}^2&= \int_{\D} |g'(z)|^2 \sum_ n |e_ n(z)|^2\,dA(z)
\\
&=\int_{\D} |g'(z)|^2 \,\log \frac{e}{1-|z|^2}\,dA(z).
\end{split}
\end{equation}
Therefore, the integration operator $T_ g$ belongs to $\mathcal{S}_ 2(\mathcal{D})$ if and only if the last integral in the previous equation is finite. The class of functions $g\in H(\D)$ satisfying this condition shall be denoted by $\mathcal{DL}$.
\par  If $1<p<2$ Theorem \ref{th:1} suggests that the membership in $\mathcal{S}_ p(\mathcal{D})$ of the operator $T_ g$ could be described by  those $g$ being in the Besov space $B_ p$. However, since for $p<2$ any operator on $\mathcal{S}_ p$ must be Hilbert-Schmidt, clearly the condition $g\in \mathcal{DL}$ is necessary for $T_ g$ being in $\mathcal{S}_ p(\mathcal{D})$, and an easy calculation shows that the function $g(z)=\log \log \frac{e}{1-z}$ belongs to $B_ p$ for all $p>1$ but $g$ is not in $\mathcal{DL}$. Thus, the  condition $g\in B_ p$ is not sufficient to assert that $T_ g$ is in $\mathcal{S}_ p(\mathcal{D})$.
\par On the other hand, as in the weighted case, there are no trace class integration operators in the Dirichlet space unless $g$ is constant.
\begin{mthm}\label{th:2}
Let $0<p\le 1$ and $g\in H(\D)$. Then $T_ g \in \mathcal{S}_ p(\mathcal{D})$ if and only if $g$ is constant.
\end{mthm}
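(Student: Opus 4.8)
The plan is to show that if $T_g$ belongs to the trace class $\mathcal{S}_1(\mathcal{D})$, then $g$ must be constant; the converse is trivial since $T_g=0$ when $g$ is constant. First I would recall the standard reduction: for $0<p\le 1$ it suffices to treat $p=1$, because $\mathcal{S}_p(\mathcal{D})\subset\mathcal{S}_1(\mathcal{D})$. The operator $T_g$ on $\mathcal{D}$ is unitarily equivalent, via the isometry $f\mapsto f'$ (modulo the point evaluation at $0$), to the Toeplitz-type operator $h\mapsto P_{\mathcal{D}}(g'\,h)$ on the Bergman space $A^2_0=A^2$, where $P_{\mathcal{D}}$ is an appropriate projection; more concretely, $\langle T_g e_n, T_g e_m\rangle_{\mathcal{D}}=\int_{\D} e_n(z)\overline{e_m(z)}\,|g'(z)|^2\,dA(z)$ for any orthonormal basis $\{e_n\}$ of $\mathcal{D}$, so $T_g^*T_g$ is the Toeplitz operator $T_\mu$ on $\mathcal{D}$ with symbol $d\mu(z)=|g'(z)|^2\,dA(z)$. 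Thus $T_g\in\mathcal{S}_1(\mathcal{D})$ if and only if $T_\mu\in\mathcal{S}_{1/2}(\mathcal{D})$, and I will exploit the known characterization of Schatten-class Toeplitz operators on $\mathcal{D}$ (developed in Section \ref{operators}) in the borderline exponent $1/2\le 1$.

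The key step is to produce, from the assumption $T_\mu\in\mathcal{S}_{1/2}$, a lower bound that forces $\mu=0$. The natural device is a decomposition of $\D$ into a dyadic Whitney grid $\{R_j\}$ of boxes with $(1-|z|)\asymp 2^{-k}$ on the $k$-th generation, and the associated atoms/reproducing kernels: for $\mathcal{D}$ one has $\|K^0_{z_j}\|_{\mathcal{D}}^2\asymp \log\frac{e}{1-|z_j|^2}$, and the normalized kernels $k_{z_j}=K^0_{z_j}/\|K^0_{z_j}\|_{\mathcal{D}}$ behave like an "almost orthogonal" system in the sense that $\sum_j |\langle f,k_{z_j}\rangle|^2\lesssim \|f\|_{\mathcal{D}}^2$. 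Testing $T_\mu$ against these (or using the standard fact that for a positive operator $\langle T_\mu k_{z_j},k_{z_j}\rangle$ controls the $\ell^{1/2}$-quasinorm of the eigenvalue sequence from below via the grid), one gets
\[
\|T_g\|_{\mathcal{S}_1(\mathcal{D})}\;\gtrsim\;\Big(\sum_j \Big(\widehat{\mu}(R_j)\Big)^{1/2}\Big)\;=\;\sum_j\Big(\int_{R_j}|g'(z)|^2\,dA(z)\Big)^{1/2}\cdot\frac{1}{\log\frac{e}{1-|z_j|^2}}\,\cdots
\]
and the precise exponent bookkeeping (weight $(1-|z_j|^2)$ coming from $|R_j|\asymp(1-|z_j|^2)^2$ against the kernel norm) shows that the right-hand side is a weighted $\ell^{1/2}$-sum of the masses $\mu(R_j)$; for this to be finite with exponent $1/2$ strictly below $1$, each $\mu(R_j)$ must vanish, hence $g'\equiv 0$.

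The main obstacle is the sharp form of the lower estimate: unlike the $p>1$ case, where one compares with an $\ell^p$ sum of averages and interpolates, the sub-additivity of $t\mapsto t^{1/2}$ means one cannot simply sum local contributions — one must exhibit a genuinely \emph{disjointly supported} family, or equivalently pass to a thinned-out sublattice $\{z_{j}\}$ on which the normalized kernels are close to orthonormal, apply the elementary fact that if $A\in\mathcal{S}_1$ then $\sum_j\langle A e_j,e_j\rangle\le\|A\|_{\mathcal{S}_1}$ fails at exponent $1/2$ unless the diagonal vanishes, and then use that the Whitney boxes in the thinned lattice still cover enough of $\D$ after a Vitali-type argument to conclude $\mu=0$ on all of $\D$. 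I would model this on Luecking's argument for the trace class on Bergman spaces and on the treatment of $\mathcal{S}_p$ Toeplitz operators for $p\le 1$ given in Section \ref{operators}; the only genuinely new point is handling the extra logarithmic factor $\log\frac{e}{1-|z|^2}$ in the kernel norm, which only helps (it makes the weight larger) and therefore does not affect the conclusion.
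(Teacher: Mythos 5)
Your reduction to $p=1$ and the identification $T_g^*T_g=Q_{\mu_g}$ with $d\mu_g(z)=|g'(z)|^2\,dA(z)$ are fine, but the core of your argument has a genuine gap, in two places. First, the final deduction is simply wrong as stated: finiteness of a weighted $\ell^{1/2}$-sum $\sum_j \mu(R_j)^{1/2}w_j$ does \emph{not} force each $\mu(R_j)$ to vanish, and likewise there is no ``elementary fact'' that a positive trace-class (or $\mathcal{S}_{1/2}$) operator must have vanishing diagonal against a nearly orthonormal family. The correct fact in this direction is that for a \emph{positive} operator $A$ and an \emph{orthonormal} set $\{e_n\}$ one has $\sum_n\langle Ae_n,e_n\rangle^{p}\le\|A\|_{\mathcal{S}_p}^{p}$ for $0<p\le1$; to conclude that $g$ is constant one must then convert the resulting finite quantity into a divergent integral, e.g.\ via subharmonicity $\mu_g(R_j)\gtrsim |g'(z_j)|^2(1-|z_j|)^2$, obtaining $\int_{\D}|g'(z)|(1-|z|^2)^{-1}dA(z)<\infty$ (possibly with an extra logarithm), which forces $g'\equiv0$ because the integral means of $|g'|$ are nondecreasing. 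Your proposal skips this entirely and replaces it with a false ``each term must vanish'' claim. Second, the lattice-testing step you lean on is exactly the delicate part for $p<1$: the normalized Dirichlet kernels $k_{z_j}$ are not orthonormal, and the machinery you invoke from Section \ref{operators} does not cover your situation --- Theorem \ref{th:TC} requires $\alpha>0$ and $p(2+\alpha)>1$, whereas you need the Toeplitz operator $Q_{\mu_g}$ on $\mathcal{D}$ (so $\alpha=0$) in $\mathcal{S}_{1/2}$, which is precisely the excluded borderline; Luecking's necessity argument at such small exponents is not a black box you can cite here.

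For comparison, the paper's proof avoids lattices and kernels altogether: it takes the canonical decomposition $T_g^*T_gf=\sum_n\lambda_n\langle f,e_n\rangle e_n$, observes that $\{e_n\}$ must be an orthonormal \emph{basis} unless $g'\equiv0$, and then uses the pointwise lower bound of Lemma \ref{le:2} with $p=1$, namely $\sum_n|e_n(z)|\,|e'_n(z)|\gtrsim(1-|z|^2)^{-1}$, together with Cauchy--Schwarz to get
\begin{equation*}
\int_{\D}|g'(z)|\,(1-|z|^2)^{-1}\,dA(z)\le C\sum_n\Big(\int_{\D}|g'|^2|e_n|^2\,dA\Big)^{1/2}=C\sum_n\lambda_n^{1/2}\le C\,\|T_g\|_{\mathcal{S}_1},
\end{equation*}
whence $g$ is constant. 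If you want to salvage your route, you would need (i) a proved almost-orthogonality/thinning lemma giving the diagonal lower bound for $Q_{\mu}$ on $\mathcal{D}$ at exponent $1/2$, and (ii) the subharmonicity-plus-divergence argument in place of your final step; as written, the essential estimate is unproved and the concluding logic is incorrect.
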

For the case $1<p<2$ we have a necessary condition and a different sufficient condition. We will see that they are sharp in a certain sense. Before that, for $p>1$ and $\gamma>0$, we consider the space
 $B_{p,\log^{\gamma}}$, that consists of those functions $g$ analytic on $\D$
 such that
\[ \|g\|^p_{B_{p,\log^{\gamma}}}=|g(0)|^p+\int_\D|g'(z)|^p
\left(\log\frac{e}{1-|z|}\right)^{\gamma}(1-|z|^2)^{p-2}\,dA(z)<\infty.\]

\begin{mthm}\label{th:3}
Let $1<p<2$ and $g\in H(\D)$. Then
\begin{enumerate}
\item[(a)] If $T_ g\in \mathcal{S}_ p(\mathcal{D})$, then $g\in B_ p$.

\item[(b)] If $g\in B_{p,\log^{p/2}}$, then $T_ g\in \mathcal{S}_ p(\mathcal{D})$. Moreover, $\|T_ g\|_{\mathcal{S}_ p}\le C \|g\|_{B_{p,\log^{p/2}}}$.

\item[(c)] If $g\in X^p_{0}$, then $T_ g\in \mathcal{S}_ p(\mathcal{D})$.
\end{enumerate}
\end{mthm}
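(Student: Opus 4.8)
The plan is to handle the three parts by different techniques, each exploiting a standard tool from Schatten-class theory on Hilbert spaces of analytic functions. For part (a), I would use the fact that membership $T_g\in\mathcal{S}_p(\mathcal{D})$ forces the diagonal of $T_g$ with respect to a suitable orthonormal-type family to be $\ell^p$-summable, or, more robustly, I would pass to the associated Toeplitz operator $T_\mu$ with $d\mu(z)=|g'(z)|^2\,dA(z)$ acting on $\mathcal{D}$ (as in Section \ref{operators}): the identity $\|T_gf\|_{\mathcal D}^2=\int_\D|f|^2\,d\mu$ shows that $T_g\in\mathcal{S}_p(\mathcal D)$ is equivalent to $T_\mu\in\mathcal{S}_{p/2}$. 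Then I would invoke a Luecking-type lower bound — testing the operator against a lattice $\{z_k\}$ of points in $\D$ with normalized reproducing kernels $k_{z_k}^0=K^0_{z_k}/\|K^0_{z_k}\|_{\mathcal D}$ — to conclude that $\sum_k \big(\mu(D_k)/\,(1-|z_k|)^2\log\frac{e}{1-|z_k|}\big)^{p/2}<\infty$, where $D_k$ are the Whitney squares. Summing $|g'|^2$ over Whitney boxes and discarding the logarithmic factor (which is $\ge 1$) recovers exactly $\int_\D|g'(z)|^p(1-|z|^2)^{p-2}\,dA(z)<\infty$, i.e. $g\in B_p$.

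For part (b), I would run the \emph{upper} Schatten estimate. Since $1<p<2$, I would use the criterion that an operator $T$ on a Hilbert space lies in $\mathcal{S}_p$ whenever $\sum_k\|Te_k\|^p<\infty$ for a sufficiently fine ``almost orthogonal'' decomposition, or — cleaner here — the characterization of $\mathcal{S}_{p/2}$ membership of the Toeplitz operator $T_\mu$ via the averaging function: $T_\mu\in\mathcal{S}_{p/2}(\mathcal{D})$ if $\sum_k\big(\widehat\mu(D_k)\big)^{p/2}<\infty$, where $\widehat\mu(D_k)=\mu(D_k)/\big((1-|z_k|)^2\log\frac{e}{1-|z_k|}\big)$ is the suitably normalized average (this is the sufficiency direction of the Luecking-type theorem, which holds for $p/2$ in the relevant range). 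Estimating $\mu(D_k)=\int_{D_k}|g'|^2\,dA\le C(1-|z_k|)^2\sup_{D_k}|g'|^2$ and then bounding $\sup_{D_k}|g'|^2$ by an average of $|g'|^p$ over a slightly larger box (subharmonicity plus Hölder, since $p<2$) produces, after summing, a bound by $\int_\D|g'(z)|^p\big(\log\frac{e}{1-|z|}\big)^{p/2}(1-|z|^2)^{p-2}\,dA(z)=\|g\|^p_{B_{p,\log^{p/2}}}$, up to a constant, which also yields the stated norm inequality $\|T_g\|_{\mathcal{S}_p}\le C\|g\|_{B_{p,\log^{p/2}}}$.

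For part (c), the strategy is to compare $X^p_0$ with the sufficient condition already established. Unwinding definition \eqref{eq:xpa} with $\alpha=0$, the quantity $\big(\int_\D|g'(z)|^2\,|1-\bar wz|^{-2}\,dA(z)\big)^{p/2}(1-|w|^2)^{p-2}$ integrated in $w$ controls the Toeplitz operator directly: the inner integral is comparable to the Berezin-type transform of $d\mu=|g'|^2\,dA$ against the Dirichlet reproducing kernel, so $g\in X^p_0$ says precisely that this Berezin transform lies in $L^{p/2}((1-|w|^2)^{p-2}dA)$, which by the Schatten characterization for Toeplitz operators on $\mathcal D$ (again the sufficiency half, now phrased via the Berezin transform rather than averages — equivalent in this range by standard arguments) gives $T_\mu\in\mathcal{S}_{p/2}$, hence $T_g\in\mathcal{S}_p(\mathcal D)$. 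The main obstacle throughout is making the Toeplitz correspondence and the Luecking-type Schatten characterization rigorous in the Dirichlet setting with the logarithmic weight $\log\frac{e}{1-|z|^2}$ in the reproducing kernel norm: one must verify that the lattice estimates, the covering arguments, and the passage between averages and Berezin transforms all go through with this extra slowly varying factor, which is where the bulk of the technical work lies; I would isolate this as a lemma in Section \ref{operators} and then apply it mechanically in (a), (b), (c).
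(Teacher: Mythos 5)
Your outline reduces all three parts to a two\--sided Luecking-type Schatten characterization of Toeplitz operators on the Dirichlet space (lattice averages or Berezin transforms), which you defer to a lemma; that lemma is not just ``the technical bulk'' --- in the form you need it, it is unavailable and its necessity half is actually false, and the internal bookkeeping is inconsistent. Concretely: with $d\mu=|g'|^2\,dA$ and Whitney boxes one has $\mu(D_k)\asymp|g'(z_k)|^2(1-|z_k|)^2$, so the discretization of $\|g\|_{B_p}^p$ is $\sum_k\mu(D_k)^{p/2}$ and that of $\|g\|^p_{B_{p,\log^{p/2}}}$ is $\sum_k\bigl(\mu(D_k)\log\frac{e}{1-|z_k|}\bigr)^{p/2}$; your normalization $\mu(D_k)/\bigl((1-|z_k|)^2\log\frac{e}{1-|z_k|}\bigr)$ matches neither, and fed into your own part (b) computation it yields $\int_\D|g'|^p\bigl(\log\frac{e}{1-|z|}\bigr)^{-p/2}(1-|z|)^{-2}\,dA$, not $\|g\|^p_{B_{p,\log^{p/2}}}$. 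More fundamentally, the natural Berezin quantity for $\mathcal{D}$ is $\mu(D_k)\log\frac{e}{1-|z_k|}$ (since $\|K^0_{z_k}\|^2\asymp\log\frac{e}{1-|z_k|}$), and a lattice lower bound of the form $\sum_k\bigl(\mu_g(D_k)\log\frac{e}{1-|z_k|}\bigr)^{p/2}\le C\|T_g\|^p_{\mathcal{S}_p(\mathcal{D})}$ is false: testing on $g(z)=z^j$, the left side is $\asymp j(\log j)^{p/2}$ while $\|T_{z^j}\|^p_{\mathcal{S}_p(\mathcal{D})}\asymp j$ by Lemma \ref{le:normagkxpa} (and your own cruder normalization gives an even larger left side). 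This is consistent with the fact that Luecking's criterion \eqref{LC} requires $p(1-\alpha)<1$ and Theorem \ref{th:TC} requires $\alpha>0$: for the logarithmic kernel the normalized reproducing kernels at lattice points are far from orthogonal. In addition, for the exponent $q=p/2<1$ you need, the inequality $\sum_k\langle Qu_k,u_k\rangle^{q}\le\|Q\|^q_{\mathcal{S}_q}$ fails even for orthonormal $\{u_k\}$ (concavity of $t\mapsto t^q$), so ``testing on kernels'' cannot deliver the necessity in (a) at all; and the sufficiency half for $\mathcal{S}_q$, $q<1$, on $\mathcal{D}$, invoked in (b) and (c), is likewise not something you can cite. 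Finally, in (c) the inner integral of \eqref{eq:xpa} with $\alpha=0$ carries the kernel $|1-\bar wz|^{-2}$, i.e.\ it is $\|T_gJ^0_w\|^2_{\mathcal{D}}$ with $J^0_w$ the derivative of the reproducing kernel --- not the Dirichlet Berezin transform, whose kernel is $\bigl|1+\log\frac{1}{1-\bar wz}\bigr|^2$.

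The paper's proof bypasses Toeplitz characterizations entirely, and this is the repair you need. For (a) (Proposition \ref{nece}(i)): take the canonical eigendecomposition of the positive operator $T_g^*T_g\in\mathcal{S}_{p/2}$, observe its eigenvectors form an orthonormal \emph{basis}, apply the pointwise bound $\sum_n|e_n(z)|^p|e'_n(z)|^{2-p}\ge C(1-|z|^2)^{p-2}$ of Lemma \ref{le:2} and H\"older to get $\int_\D|g'|^p(1-|z|^2)^{p-2}\,dA\le C\sum_n\lambda_n^{p/2}$. For (b): for an arbitrary orthonormal set estimate $\sum_n|\langle T_ge_n,e_n\rangle_{\mathcal{D}}|^p$ by H\"older together with $\sum_n|e_n(z)|^2\le\log\frac{e}{1-|z|^2}$ and $\sum_n|e'_n(z)|^2\le(1-|z|^2)^{-2}$ from \eqref{eqRK1}, then invoke \cite[Theorem 1.27]{Zhu}; this gives exactly $\|T_g\|_{\mathcal{S}_p}\le C\|g\|_{B_{p,\log^{p/2}}}$. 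For (c) (Proposition \ref{Pr4} via Proposition \ref{pr:gen}(ii)): test on the normalized \emph{derivative} kernels $j^0_z$, which gives $\|T_g\|^p_{\mathcal{S}_p}\le\|T_g\|^p+C\int_\D\|T_gj^0_z\|^p_{\mathcal{D}}\,d\lambda(z)\asymp\|g\|^p_{X^p_0}$. The fact that (a), (b), (c) involve three genuinely different conditions is exactly why a single two-sided lattice characterization of the kind your proposal presupposes cannot be the route.
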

When one takes the monomials as the symbols, it turns out that the correct behavior of $\|T_ g\|_{\mathcal{S}_ p}$ is given by $B_ p$ or $X^p_{0}$, while if one takes as a symbol to be functions of the type $g_ a(z)=(1-\bar{a}z)^{-\gamma}$, the correct behavior is   given by the $B_{p,\log^{p/2}}$ condition (see Lemmas \ref{le:normagkxpa} and \ref{le:ga}).
\par The case $p>2$ seems to be a mystery. Let $\mathcal{D}^p_{\beta}$ denote the space of those functions $f$ with $f'\in A^p_{\beta}$. For $p>2$, the inclusion $\mathcal{D}^p_{\beta} \subset \mathcal{D}$ holds if and only if $\beta<(p-2)/2$; and $\mathcal{D}\subset \mathcal{D}^p_{\beta}$ if and only if $\beta \ge p-2$ (see \cite[p.94]{ZZ}). Thus, if one is looking for conditions on the integrability of $g'$, it can not be expected some necessary condition much better than $B_ p=\mathcal{D}^p_{p-2}$, and a sufficient condition must be stronger than $g$ being in $\mathcal{D}^p_{\frac{p-2}{2}}$. We will discuss a little bit this case in Section \ref{sclassical}.

We close this section saying that from now on the paper is organized as
follows. In Section \ref{s2} we introduce several preliminary general results related on Schatten classes of operators on Dirichlet spaces. Section \ref{smain} is devoted to the proof of Theorem \ref{th:nuevo}. There it  will be proved directly (see Proposition \ref{pr:xpaproperties} (iv)) the identity
\begin{equation}\label{eq:BPC}
X^p_\a=B_p,\quad p>1,\, \alpha>0,\,\text{and $p(1-\alpha)<2$},
\end{equation}
which together with Theorem \ref{th:nuevo}
gives a proof of Theorem \ref{th:1} not relying in the theory of Toeplitz operators. It is worth mentioning that the Besov space $B_ p$ is rich of several characterizations (the identity \eqref{eq:BPC} gives a new one), each of them being the appropriate tool to use in different situations (see \cite{AFP}, \cite{BP}, or \cite{ZhuJMAA91} for example).  In Section \ref{sclassical}  we prove Theorem \ref{th:2} and Theorem \ref{th:3}. Also, by using some testing classes of functions,  we show that those results are sharp in a certain sense. Finally, Section \ref{operators} is devoted to study the relationship of the integration operator $T_ g$ with other classical operators acting on weighted Dirichlet spaces, such as Toeplitz operators, multiplication operators or big and small Hankel operators.  A similar connection also happens in other contexts \cite{PottSmithJFA2004}.
 Indeed, the same techniques used in the proof of Theorem \ref{th:nuevo}  work to demonstrate an  extension for positive Borel measures of the helpful  result of Luecking on Toeplitz operators \cite[p. $347$]{LJFA87}).

\par
Throughout the paper, the letter $C$ will denote a positive absolute
constant whose value may change at different occurrences, and we write $A\asymp B$ when the two quantities $A$ and $B$ are comparable.
 \section{Preliminary results}\label{s2}
 \par Let $H$ and $K$ be separable Hilbert spaces. Given $0<p<\infty$, let $\mathcal{S}_ p(H,K)$  denote the
Schatten $p$-class of operators from $H$ to $K$. If $H=K$ we simply
shall write $\mathcal{S}_ p(H)$. The class $\mathcal{S}_ p(H,K)$
consists of those compact operators $T$ from $H$ to $K$  with its sequence of singular
numbers $\lambda_ n$ belonging to $\ell^p$, the $p$-summable
sequence space. We recall that the singular numbers of a
compact operator $T$ are the square root of the eigenvalues of the
positive operator $T^*T$, where $T^*$ denotes the Hilbert adjoint of $T$. We remind the reader that $T\in \mathcal{S}_p(H)$ if and only
if $T^* T\in \mathcal{S}_{p/2}(H)$.
Also, the compact operator $T$ admits a decomposition of the form
\[T=\sum_ n \lambda_ n \, \langle \cdot, e_ n\rangle_ H \,\sigma_ n,\] where
$\{\lambda_ n\}$ are the singular numbers of $T$, $\{e_ n\}$ is an
orthonormal set in $H$, and $\{\sigma_ n\}$ is an orthonormal set in
$K$.
\par For $p\ge 1$, the class $\mathcal{S}_ p(H,K)$ is a Banach space equipped
with the norm
\[\|T\|_ {\mathcal{S}_ p}=\left (\sum_ n |\lambda_ n|^p\right
)^{1/p},\] while for $0<p<1$ one has the inequality $\|S+T\|_{\mathcal{S}_
p}^p\le \|S\|_{\mathcal{S}_ p}^p+\|T\|_{\mathcal{S}_ p}^p.$ We refer to \cite{Sim} or \cite[Chapter 1]{Zhu}
for a brief account on the theory of Schatten $p$-classes.
\par We shall write $H$ for a Hilbert
space of analytic functions in $\D$ with reproducing kernels
$K_z$.  Given an operator $T$  on $H$, usually  the reproducing kernel functions carry a large amount of information about relevant properties
of $T$, such as boundedness, compactness, membership in Schatten $p$-classes, etc.
 It is known that if $\{e_n\}$ is an orthonormal basis of a Hilbert
space $H$ of analytic functions in $\D$ with reproducing kernel
$K_z$, then
    \begin{equation}\label{RKformula}
    K_z(\zeta)=\sum_n e_ n(\zeta)\,\overline{e_ n(z)}
    \end{equation}
for all $z$ and $\zeta$ in $\D$, see e.g.~\cite[Theorem
$4.19$]{Zhu}. We also introduce $J_z$, the derivative of $K_z$ respect to $\overline{z}$, that is,
 \begin{equation}\label{RKdotformula}
    J_z(\zeta)=\sum_n e_ n(\zeta)\,\overline{e'_ n(z)}.
    \end{equation}
 It follows that
    \begin{equation}\label{eqRK1}
    \sum_n|e_n(z)|^2\le\|K_ z\|_{H}^2,\quad \text{and}\quad \sum_n|e'_n(z)|^2\le\|J_z\|_{H}^2
    \end{equation}
for any orthonormal set $\{e_n\}$ of $H$, and equality in
\eqref{eqRK1} holds if $\{e_ n\}$ is an orthonormal basis of $H$.
We shall write $k_z$ and $j_z$ for the normalizations of these functions.
 \par
 In order to avoid some confusions when dealing with reproducing kernels of either $\Da$ or $A^2_{\alpha}$, we use the notation $B_{z}^{\alpha}$ for the reproducing kernel of the weighted Bergman space $A^2_{\alpha}$ at the point $z$,
  and let
  $b_{z}^{\alpha}=\frac{B_{z}^{\alpha}}{\|B_{z}^{\alpha}\|_{A^2_\alpha}}$ be its normalization. It is well known (see \cite[Corollary $4.20$]{Zhu}) that
\begin{equation}\label{RKBergman}
B_ z^{\alpha}(w)=\frac{1}{(1-\bar{z}{w})^{2+\alpha}},\quad \textrm{ and}\quad \|B_ z^{\alpha}\|_{A^2_{\alpha}}=(1-|z|^2)^{-\frac{(2+\alpha)}{2}}.
\end{equation}
The reproducing kernel function for the Dirichlet type space $\Da$ is denoted by
$K^{\alpha}_ z$, and $k_ z^{\alpha}$ denotes the corresponding normalized reproducing kernel. Since $f\in \Da$ if and only if $f'\in A^2_{\alpha}$, using the reproducing formula for the Bergman space $A^2_{\alpha}$ (see \cite[Proposition $4.23$]{Zhu}), it can be deduced the following expression of the reproducing kernel of $\Da$ (see \cite{BP} or \cite{Wuark93}):
\begin{equation}\label{FRK}
  K^{\alpha}_ z(w)=1+\int_{0}^{w}\int_{0}^{\bar{z}}\frac{d\zeta}{(1-\eta\zeta)^{2+\alpha}}\,d\eta.
\end{equation}
In particular, for $\alpha=0$,
\begin{displaymath}
K^{\mathcal{D}}_ z(w):=K^{0}_ z(w)=1+\log \frac{1}{1-\bar{z}w}.
\end{displaymath}
Also, it is easy to see that
\begin{equation}\label{RKDA}
\|K^{\alpha}_ z\|_{\Da}^2= K_ z^{\alpha}(z)\asymp
\left \{
\begin{array}{ccc}
\log \frac{e}{1-|z|^2} & \textrm{if} & \alpha=0
\\
(1-|z|^2)^{-\alpha} & \textrm{if} &\alpha>0
\end{array}\right .
.
\end{equation}
 \par The next two results are certainly well known to the experts (see \cite{HS} or \cite{Msm} for similar results), but we find convenient for the reader to give a proof here.

\begin{prop}\label{th:general}

Let $T:A^2_{\alpha}\rightarrow H$ be a compact operator, where $H$
is any separable Hilbert space.
\begin{enumerate}
\item[\rm(i)]  For $p\ge 2$,
\[\int_{\D} \|T b_{z}^{\alpha}\|_ H ^p\,d\lambda (z)\le \frac{1}{1+\alpha}\,
\|T\|_{\mathcal{S}_ p}^p.\]

\item[\rm(ii)] For $0<p\le 2$,
\[\|T\|_{\mathcal{S}_ p}^p\le (1+\alpha)\int_{\D} \|T b_{z}^{\alpha}\|_ H ^p\,d\lambda
(z).\]
\end{enumerate}
\end{prop}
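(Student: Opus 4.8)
The plan is to obtain both inequalities at once, as a single application of Jensen's inequality to the singular value decomposition of $T$; the only genuine computation is the exact value of the $L^{2}(d\lambda)$-norm of the coordinate functions $z\mapsto\langle b_{z}^{\alpha},e_{n}\rangle_{A^2_{\alpha}}$.

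First I would fix a singular value decomposition $T=\sum_{n}\lambda_{n}\,\langle\cdot,e_{n}\rangle_{A^2_{\alpha}}\,\sigma_{n}$, with $\{\lambda_{n}\}$ the singular numbers of $T$, $\{e_{n}\}$ orthonormal in $A^2_{\alpha}$ and $\{\sigma_{n}\}$ orthonormal in $H$, and then enlarge $\{e_{n}\}$ to an orthonormal basis of $A^2_{\alpha}$ by appending vectors with associated coefficient $\lambda_{n}=0$; this changes neither $\|T\|_{\mathcal{S}_{p}}$ nor the quantities below. Since $\{\sigma_{n}\}$ is orthonormal, for every $z\in\D$
\[\|Tb_{z}^{\alpha}\|_{H}^{2}=\sum_{n}\lambda_{n}^{2}\,w_{n}(z),\qquad w_{n}(z):=|\langle b_{z}^{\alpha},e_{n}\rangle_{A^2_{\alpha}}|^{2}.\]
By the reproducing property of $B_{z}^{\alpha}$ and \eqref{RKBergman} one has $\langle e_{n},b_{z}^{\alpha}\rangle_{A^2_{\alpha}}=e_{n}(z)(1-|z|^{2})^{(2+\alpha)/2}$, hence $w_{n}(z)=|e_{n}(z)|^{2}(1-|z|^{2})^{2+\alpha}$. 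From this I would record two facts. Since $\{e_{n}\}$ is an orthonormal basis, Parseval's identity gives $\sum_{n}w_{n}(z)=\|b_{z}^{\alpha}\|_{A^2_{\alpha}}^{2}=1$ for every $z$ (this is the equality case of \eqref{eqRK1}); and, since $dA_{\alpha}=(1+\alpha)(1-|z|^{2})^{\alpha}\,dA$ while $d\lambda=(1-|z|^{2})^{-2}\,dA$, a one-line computation gives
\[\int_{\D}w_{n}(z)\,d\lambda(z)=\int_{\D}|e_{n}(z)|^{2}(1-|z|^{2})^{\alpha}\,dA(z)=\frac{1}{1+\alpha}\,\|e_{n}\|_{A^2_{\alpha}}^{2}=\frac{1}{1+\alpha}\,.\]

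With these in hand I would apply Jensen's inequality to the function $t\mapsto t^{p/2}$ with the probability weights $(w_{n}(z))_{n}$. For $p\ge2$ this function is convex, so $\|Tb_{z}^{\alpha}\|_{H}^{p}=\bigl(\sum_{n}\lambda_{n}^{2}w_{n}(z)\bigr)^{p/2}\le\sum_{n}\lambda_{n}^{p}\,w_{n}(z)$; integrating in $d\lambda$, interchanging sum and integral by Tonelli, and using the two recorded facts yields $\int_{\D}\|Tb_{z}^{\alpha}\|_{H}^{p}\,d\lambda(z)\le\frac{1}{1+\alpha}\sum_{n}\lambda_{n}^{p}=\frac{1}{1+\alpha}\|T\|_{\mathcal{S}_{p}}^{p}$, which is part (i). For $0<p\le2$ the function is concave, the inequality reverses, and the same integration gives $\|T\|_{\mathcal{S}_{p}}^{p}=\sum_{n}\lambda_{n}^{p}\le(1+\alpha)\int_{\D}\|Tb_{z}^{\alpha}\|_{H}^{p}\,d\lambda(z)$, which is part (ii).

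I do not expect a real obstacle: this is the standard Berezin-transform argument and the estimates are elementary. The two points that require a word of care are the reduction to an orthonormal basis (so that $\sum_{n}w_{n}(z)=1$ holds exactly, which is what makes Jensen applicable with probability weights) and the exact evaluation $\int_{\D}w_{n}\,d\lambda=1/(1+\alpha)$, which is what produces the sharp constant $1/(1+\alpha)$ in both statements; as a consistency check, $p=2$ forces equality in (i) and (ii) simultaneously.
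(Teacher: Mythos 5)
Your proof is correct and follows essentially the same route as the paper: both start from the singular value decomposition, write $\|Tb_z^{\alpha}\|_H^2=\sum_n\lambda_n^2|e_n(z)|^2\|B_z^{\alpha}\|_{A^2_{\alpha}}^{-2}$, compare $\bigl(\sum_n\lambda_n^2|e_n(z)|^2\bigr)^{p/2}$ with $\sum_n\lambda_n^p|e_n(z)|^2$ by convexity, and integrate using $\int_{\D}|e_n(z)|^2(1-|z|^2)^{\alpha}\,dA(z)=\tfrac{1}{1+\alpha}$. The only (cosmetic) difference is that you complete $\{e_n\}$ to an orthonormal basis so that Parseval makes the weights sum exactly to one and Jensen applies, whereas the paper keeps the orthonormal set from the decomposition and uses H\"older together with the inequality $\sum_n|e_n(z)|^2\le\|B_z^{\alpha}\|_{A^2_{\alpha}}^2$.
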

\begin{proof}
Since the operator $T$ is compact, it admits the decomposition
\[Tf=\sum_ n \lambda_ n \langle f,e_ n\rangle_{A^2_{\alpha}} f_
n,\] where $\{\lambda_ n\}$ are the singular values of $T$, $\{e_
n\}$ is an orthonormal set in $A^2_{\alpha}$, and $\{f_ n\}$ is an
orthonormal set in $H$. Then
\[TB_ z^{\alpha}=\sum_ n \lambda_ n \overline{e_ n(z)} f_ n,\]
and therefore
\[\|TB_ z^{\alpha}\|_{H}^2=\sum_ n |\lambda_ n|^2\,|e_ n(z)|^2.\]
Now, if $p\ge 2$, using H\"{o}lder's inequality, \eqref{eqRK1} and \eqref{RKBergman}, yields
\begin{displaymath}
\begin{split}
\int_{\D} \|T b_{z}^{\alpha}\|_ H ^p\,d\lambda (z)&=\int_{\D} \|T
B_{z}^{\alpha}\|_ H ^p\,\|B_ z^{\alpha}\|_{A^2_{\alpha}}^{-p}\,d\lambda (z)
\\
&=\int_{\D} \left (\sum_ n |\lambda_ n|^2\,|e_ n(z)|^2\right
)^{p/2}\,\|B_ z^{\alpha}\|_{A^2_{\alpha}}^{-p}\,d\lambda (z)
\\
&\le \int_{\D} \left (\sum_ n |\lambda_ n|^p\,|e_ n(z)|^2\right
)\left (\sum_ n |e_ n(z)|^2 \right )^{\frac{p-2}{2}} \!\!\|B_
z^{\alpha}\|_{A^2_{\alpha}}^{-p} \,d\lambda(z)
\\
&\le \sum_ n |\lambda_ n|^p \int_{\D}|e_ n(z)|^2\,\|B_
z^{\alpha}\|_{A^2_{\alpha}}^{-2} \,d\lambda(z)
\\
&=\sum_ n |\lambda_ n|^p \int_{\D} |e_ n(z)|^2
(1-|z|^2)^{\alpha}\,dA(z)=\frac{1}{1+\alpha}\,\|T\|_{S_ p}^p.
\end{split}
\end{displaymath}
If $0<p\le 2$, a similar argument, using H\"{o}lder's inequality with exponent $2/p\ge 1$, \eqref{eqRK1}  and \eqref{RKBergman}, gives
\begin{displaymath}
\begin{split}
\|T\|_{S_ p}^p&=(1+\alpha)\int_{\D} \sum_ n |\lambda_ n|^p \,|e_ n(z)|^2\,\|B_
z^{\alpha}\|_{A^2_{\alpha}}^{-2} \,d\lambda(z)
\\
& \le (1+\alpha)\int_{\D}\left (\sum_ n |\lambda_ n|^2\,|e_ n(z)|^2\right
)^{\frac{p}{2}}\! \left (\sum_ n |e_ n(z)|^2 \right )^{\frac{2-p}{2}}\!\!\|B_
z^{\alpha}\|_{A^2_{\alpha}}^{-2} \,d\lambda(z)
\\
&\le (1+\alpha)\int_{\D}\left (\sum_ n |\lambda_ n|^2\,|e_ n(z)|^2\right
)^{\frac{p}{2}} \!\|B_ z^{\alpha}\|_{A^2_{\alpha}}^{-p} \,d\lambda(z)
\\
&=(1+\alpha)\int_{\D} \|T b_{z}^{\alpha}\|_ H ^p\,d\lambda (z).
\end{split}
\end{displaymath}
\end{proof}
The corresponding analogue of Proposition \ref{th:general} for the Dirichlet type spaces $\Da$ uses the functions $j^{\alpha}_ z\ig\frac{J^{\alpha}_ z}{\| J^{\alpha}_ z\|_{\Da}}$.
\begin{prop}\label{pr:gen}

Let $T:\Da\rightarrow H$ be a compact operator, where $H$
is any separable Hilbert space.
\begin{enumerate}
\item[\rm(i)]  For $p\ge 2$,
\[\int_{\D} \|T j^{\alpha}_z\|_ H ^p\,d\lambda (z)\le \frac{1}{1+\alpha}
\|T\|_{\mathcal{S}_ p}^p.\]

\item[\rm(ii)] For $0<p\le 2$,
\[\|T\|_{\mathcal{S}_ p}^p\le \|T\|^p+(1+\alpha)\!\int_{\D} \|T j^{\alpha}_z\|_ H ^p\,d\lambda
(z).\]
\end{enumerate}
\end{prop}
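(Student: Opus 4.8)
The plan is to follow the proof of Proposition~\ref{th:general} almost verbatim, with the Bergman kernel $B_z^\alpha$ replaced throughout by the derivative kernel $J_z^\alpha$. The only genuinely new input needed is the size of $\|J_z^\alpha\|_{\Da}$. Since $J_z^\alpha$ reproduces the functional $f\mapsto f'(z)$, i.e. $\langle f,J_z^\alpha\rangle_{\Da}=f'(z)$ (immediate from \eqref{RKdotformula} and the expansion $f=\sum_n\langle f,e_n\rangle e_n$), differentiating \eqref{FRK} in $\bar z$ gives $J_z^\alpha(w)=\int_0^w(1-\eta\bar z)^{-(2+\alpha)}\,d\eta$, whence
\[
\|J_z^\alpha\|_{\Da}^2=(J_z^\alpha)'(z)=(1-|z|^2)^{-(2+\alpha)}=\|B_z^\alpha\|_{A_\alpha^2}^2 .
\]
In particular the change of measure $\|J_z^\alpha\|_{\Da}^{-2}\,d\lambda(z)=(1-|z|^2)^{\alpha}\,dA(z)$ is available, exactly as in the Bergman case.

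For part (i) I would write the Schmidt decomposition $Tf=\sum_n\lambda_n\langle f,e_n\rangle_{\Da}f_n$ with $\{e_n\}$ an orthonormal set in $\Da$ and $\{f_n\}$ orthonormal in $H$, note that $TJ_z^\alpha=\sum_n\lambda_n\overline{e_n'(z)}f_n$ so that $\|TJ_z^\alpha\|_H^2=\sum_n|\lambda_n|^2|e_n'(z)|^2$, and then run the same H\"older estimate as in Proposition~\ref{th:general}(i), using $\sum_n|e_n'(z)|^2\le\|J_z^\alpha\|_{\Da}^2$ from \eqref{eqRK1}. This reduces the integral to $\sum_n|\lambda_n|^p\int_\D|e_n'(z)|^2(1-|z|^2)^{\alpha}\,dA(z)=\tfrac1{1+\alpha}\sum_n|\lambda_n|^p\|e_n'\|_{A_\alpha^2}^2$, and since $\|e_n'\|_{A_\alpha^2}^2\le\|e_n\|_{\Da}^2=1$ the bound $\tfrac1{1+\alpha}\|T\|_{\mathcal{S}_p}^p$ follows; only that $\{e_n\}$ is an orthonormal set was used.

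For part (ii) I would take $\{e_n\}$ to be an orthonormal \emph{basis} of $\Da$, so that equality holds in \eqref{eqRK1}, and start from $\|T\|_{\mathcal{S}_p}^p=\sum_n|\lambda_n|^p\|e_n\|_{\Da}^2$. Splitting $\|e_n\|_{\Da}^2=|e_n(0)|^2+\|e_n'\|_{A_\alpha^2}^2$, the contribution of $\|e_n'\|_{A_\alpha^2}^2$ is handled exactly as in Proposition~\ref{th:general}(ii): H\"older with exponent $2/p$, together with $\sum_n|e_n'(z)|^2=\|J_z^\alpha\|_{\Da}^2$ and the change of measure above, yields the term $(1+\alpha)\int_\D\|Tj_z^\alpha\|_H^p\,d\lambda(z)$. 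For the contribution of $|e_n(0)|^2$ I would use that each singular value satisfies $|\lambda_n|\le\|T\|$, and that $\sum_n|e_n(0)|^2=\|K_0^\alpha\|_{\Da}^2=K_0^\alpha(0)=1$ by \eqref{eqRK1} and \eqref{FRK}; hence $\sum_n|\lambda_n|^p|e_n(0)|^2\le\|T\|^p$. Adding the two contributions gives the claimed inequality.

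I do not expect a serious obstacle: once $\|J_z^\alpha\|_{\Da}$ has been computed, the argument is essentially a transcription of the Bergman proof. The one point requiring a little care — and the reason for the extra summand $\|T\|^p$, which is absent in Proposition~\ref{th:general} — is the constant term in the $\Da$-inner product: $\|e_n\|_{\Da}^2$ coincides with $\|e_n'\|_{A_\alpha^2}^2$ only up to $|e_n(0)|^2$, and it is exactly this leftover that must be absorbed into $\|T\|^p$.
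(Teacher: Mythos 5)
Your proposal is correct and follows essentially the same route as the paper: the same Schmidt decomposition, the identity $\|TJ^{\alpha}_z\|_H^2=\sum_n|\lambda_n|^2|e_n'(z)|^2$ with $\|J^{\alpha}_z\|_{\Da}=(1-|z|^2)^{-(2+\alpha)/2}$, the H\"older estimates against \eqref{eqRK1}, and the splitting $\|e_n\|_{\Da}^2=|e_n(0)|^2+\|e_n'\|_{A^2_\alpha}^2$ whose constant part is absorbed into $\|T\|^p$ exactly as in the paper's terms $(I)$ and $(II)$. The only cosmetic difference is your insistence on an orthonormal basis in part (ii); the paper works with the orthonormal set from the decomposition and only uses the inequality direction of \eqref{eqRK1}, which suffices.
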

\begin{proof}
Since $T$ is compact, it admits the decomposition
$$Tf=\sum_ n \lambda_ n \langle f,e_ n\rangle_{\Da} f_
n,$$ where $\{\lambda_ n\}$ are the singular values of $T$, $\{e_
n\}$ is an orthonormal set in $\Da$, and $\{f_ n\}$ is an
orthonormal set in $H$. It follows from \eqref{FRK} that $J^{\alpha}_z(0)=0$, then using \eqref{RKBergman},
\begin{equation}\label{Id2}
\|J^{\alpha}_z\|_{\Da}=\|B_ z^{\alpha}\|_{A^2_{\alpha}}=(1-|z|^2)^{-\frac{(2+\alpha)}{2}},
\end{equation}
and
\begin{displaymath}
\langle J^{\alpha}_z ,e_ n\rangle_{\Da}=\langle B_ z^{\alpha},e'_ n\rangle_{A^2_{\alpha}}=\overline{e'_ n(z)}.
\end{displaymath}
Thus
$T J^{\alpha}_z=\sum_ n \lambda_ n \overline{e'_ n(z)} f_ n,$
and therefore
\begin{equation}\label{Id1}
\|TJ^{\alpha}_z\|_{H}^2=\sum_ n |\lambda_ n|^2\,|e'_ n(z)|^2.
\end{equation}
If $p\ge 2$, using the identity \eqref{Id1}, H\"{o}lder's inequality, \eqref{eqRK1} and \eqref{Id2}
\begin{displaymath}
\begin{split}
\int_{\D} \|T j^{\alpha}_z\|_ H ^p\,d\lambda (z)
&=\int_{\D} \left (\sum_ n |\lambda_ n|^2\,|e'_ n(z)|^2\right
)^{p/2}\!\!\|J^{\alpha}_z\|_{\Da}^{-p}\,d\lambda (z)
\\
&\le \int_{\D} \left (\sum_ n |\lambda_ n|^p\,|e'_ n(z)|^2\right
)\left (\sum_ n |e'_ n(z)|^2 \right )^{\frac{p-2}{2}}\!\! \|J^{\alpha}_z\|_{\Da}^{-p} d\lambda(z)
\\
&\le \sum_ n |\lambda_ n|^p \int_{\D}|e'_ n(z)|^2\,\|J^{\alpha}_z\|_{\Da}^{-2} \,d\lambda(z)
\\
&=\sum_ n |\lambda_ n|^p \int_{\D} |e'_ n(z)|^2
(1-|z|^2)^{\alpha}\,dA(z)\le \frac{1}{1+\alpha}\,\|T\|_{\mathcal{S}_ p}^p.
\end{split}
\end{displaymath}
If $0<p\le 2$, since $\|e_ n\|_{\Da}=1$, and $dA_{\alpha}(z)=(1+\alpha)\|J^{\alpha}_z\|_{\Da}^{-2} \,d\lambda(z)$ due to \eqref{Id2}, then
\begin{equation}\label{Tp2}
\begin{split}
\|T\|_{\mathcal{S}_ p}^p&= \sum_ n |\lambda_n|^p\,|e_ n(0)|^2 \! + \!(1+\alpha)\!\sum_ n |\lambda_ n|^p \!\!\int_{\D}|e'_ n(z)|^2\,\|J^{\alpha}_z\|_{\Da}^{-2} \,d\lambda(z)
\\
&=(I)+(II).
\end{split}
\end{equation}
For the first term (I), observe that $|\lambda_ n|\le \|T\|$, and therefore
\begin{displaymath}
(I)\,\le \,\|T\|^p\sum_ n |e_ n(0)|^2=\|T\|^p \,\|K_ 0^{\alpha}\|^2_{\Da}= \|T\|^p.
\end{displaymath}
For the second term (II), due to H\"{o}lder's inequality, \eqref{eqRK1} and the identity \eqref{Id1}
\begin{displaymath}
\begin{split}
(II)& \,\le (1+\alpha)\int_{\D}\left (\sum_ n |\lambda_ n|^2\,|e'_ n(z)|^2\right
)^{p/2} \!\! \left (\sum_ n |e'_ n(z)|^2 \right )^{\frac{2-p}{2}}\!\!\|J^{\alpha}_z\|_{\Da}^{-2} \,d\lambda(z)
\\
&\le (1+\alpha)\int_{\D}\left (\sum_ n |\lambda_ n|^2\,|e'_ n(z)|^2\right
)^{p/2}\! \!\|J^{\alpha}_z\|_{\Da}^{-p} \,d\lambda(z)
\\
&=(1+\alpha)\int_{\D} \|T j^{\alpha}_z\|_ H ^p\,d\lambda (z).
\end{split}
\end{displaymath}
Putting the estimates obtained for (I) and (II) in \eqref{Tp2} we obtain part (ii). This completes the proof.
\end{proof}
\par The following result will also  be needed.
\begin{lem}\label{le:2}
Let $\alpha\ge 0$. If  $1\le p<2$ there is a  constant $C=C(p,\alpha)>0$ such that
\begin{displaymath}
\sum_ n |e_n(z)|^p|e'_n(z)|^{2-p}\ge C \, (1-|z|^2)^{p-2-\alpha},\quad |z|\to 1^-
\end{displaymath}
for any orthonormal basis $\{e_n\}$ of $\Da$.
\end{lem}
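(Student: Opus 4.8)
The inequality to be proved is a lower bound for the quantity $\sum_n |e_n(z)|^p|e'_n(z)|^{2-p}$, valid for \emph{any} orthonormal basis of $\Da$. The natural idea is to test the sum against the normalized reproducing kernels, since for an orthonormal basis $\{e_n\}$ we have equality in \eqref{eqRK1}: $\sum_n |e_n(z)|^2 = K^{\alpha}_z(z)$ and $\sum_n |e'_n(z)|^2 = \|J^{\alpha}_z\|_{\Da}^2$. The key point is that the sum $\sum_n |e_n(z)|^p|e'_n(z)|^{2-p}$ is not itself intrinsic (unlike the two extreme cases $p=2$ and $p=0$), so one cannot compute it exactly; instead one bounds it \emph{below} by a clever choice. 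I would proceed as follows.

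First, fix $z$ with $|z|$ close to $1$ and consider, among all $n$, the single index — or a suitable subset of indices — for which $|e_n(z)|$ and $|e'_n(z)|$ are both reasonably large. More precisely, I would use a reverse Hölder / duality argument: write $|e_n(z)|^p|e'_n(z)|^{2-p} = \big(|e_n(z)|^2\big)^{p/2}\big(|e'_n(z)|^2\big)^{(2-p)/2}$ and apply Hölder's inequality with exponents $2/p$ and $2/(2-p)$ in the \emph{wrong} direction is not available; instead the correct move is: by Hölder applied to the sum $\sum_n |e_n(z)|^2$, we have
\[
\sum_n |e_n(z)|^2 = \sum_n \big(|e_n(z)|^p|e'_n(z)|^{2-p}\big)^{1}\cdot\big(|e'_n(z)|^{2-p}\big)^{-1}\cdot|e_n(z)|^{2-p},
\]
which is awkward. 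The cleaner route, and the one I would actually take, is to apply Hölder to bound $\sum_n |e_n(z)|^2$ from \emph{above} by the desired sum times a power of $\sum_n |e'_n(z)|^2$:
\[
\sum_n |e_n(z)|^2 = \sum_n |e_n(z)|^p|e'_n(z)|^{2-p}\cdot \frac{|e_n(z)|^{2-p}}{|e'_n(z)|^{2-p}},
\]
and estimate the last factor using a pointwise bound. Concretely, I expect to need a uniform pointwise estimate of the shape $|f(z)| \le C\,(1-|z|^2)\,|f'(z)| + (\text{lower order})$ — or rather its reciprocal form — which does not hold for individual functions. So this must be handled globally.

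The genuinely workable approach: apply Hölder's inequality with exponents $2/p$ and $2/(2-p)$ to the product
\[
\sum_n |e_n(z)|^p\,|e'_n(z)|^{2-p} \le \Big(\sum_n |e_n(z)|^2\Big)^{p/2}\Big(\sum_n |e'_n(z)|^2\Big)^{(2-p)/2},
\]
which is the \emph{upper} bound and is the wrong direction, confirming that a direct Hölder argument fails. Thus the proof must exploit that $\{e_n\}$ is a \emph{basis}, not merely an orthonormal set, through the reproducing kernel identity for a mixed object: I would look at $\sum_n e_n(z)\overline{e_n'(z)} = J^{\alpha}_z(z)$ (the diagonal of $J^{\alpha}_z$), and more usefully at the reproducing-kernel-based lower bound obtained by \emph{testing}: for the particular choice where one evaluates the Schatten-type sum against $k^{\alpha}_z$ and $j^{\alpha}_z$ simultaneously. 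I anticipate the argument runs by choosing, for each $z$, a function $f_z \in \Da$ with $\|f_z\|_{\Da}=1$, $|f_z(z)|\gtrsim \|K^{\alpha}_z\|_{\Da}$ and $|f_z'(z)|\gtrsim \|J^{\alpha}_z\|_{\Da}$ (such a normalized kernel combination exists because $J^{\alpha}_z$ and $K^{\alpha}_z$ are, up to the constant term, comparable after normalization — indeed $j^{\alpha}_z$ already satisfies $|(j^{\alpha}_z)'(z)| = \|J^{\alpha}_z\|_{\Da}$ and one checks $|j^{\alpha}_z(z)| \gtrsim \|K^{\alpha}_z\|_{\Da}/\|J^{\alpha}_z\|_{\Da}\cdot(\dots)$), expand $f_z = \sum_n c_n e_n$ with $\sum|c_n|^2=1$, and then
\[
\|K^{\alpha}_z\|_{\Da}^{p/2}\|J^{\alpha}_z\|_{\Da}^{(2-p)/2} \lesssim |f_z(z)|^p|f_z'(z)|^{2-p} = \Big|\sum_n c_n e_n(z)\Big|^p\Big|\sum_n c_n e_n'(z)\Big|^{2-p},
\]
and by Cauchy–Schwarz $\big|\sum c_n e_n(z)\big| \le (\sum|c_n|^2)^{1/2}(\sum|e_n(z)|^2)^{1/2}$ — again upper bounds.

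So the actual mechanism must be Hölder on the index sum in the form: for $1\le p<2$,
\[
\Big(\sum_n |e_n(z)|^2\Big)^{?}\Big(\sum_n|e_n'(z)|^2\Big)^{?} \le \Big(\sum_n |e_n(z)|^p|e_n'(z)|^{2-p}\Big)\cdot(\cdots),
\]
which is false in general but becomes true if we insert the right auxiliary weights — and here is where $|z|\to 1^-$ enters. I would use the estimates \eqref{RKDA} and \eqref{Id2}: $\sum_n|e_n(z)|^2 = K^{\alpha}_z(z)\asymp (1-|z|^2)^{-\alpha}$ (for $\alpha>0$; $\asymp\log\frac{e}{1-|z|^2}$ for $\alpha=0$) and $\sum_n|e_n'(z)|^2 = \|J^{\alpha}_z\|_{\Da}^2 = (1-|z|^2)^{-2-\alpha}$. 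Then, writing $a_n = |e_n(z)|^2$, $b_n = |e_n'(z)|^2$, the claim is $\sum_n a_n^{p/2}b_n^{(2-p)/2} \ge C(1-|z|^2)^{p-2-\alpha}$, and since $\sum b_n = (1-|z|^2)^{-2-\alpha}$ is exactly $(1-|z|^2)^{p-2-\alpha}\cdot (1-|z|^2)^{-p}$, while $\sum a_n \asymp (1-|z|^2)^{-\alpha}$, the desired bound is equivalent (after normalizing $\sum b_n$) to showing $\sum_n a_n^{p/2}b_n^{(2-p)/2} \gtrsim (\sum_n b_n)^{1-p/2}(\sum_n a_n)^{?}$ — and by Hölder $\sum a_n = \sum a_n^{p/2}b_n^{(2-p)/2}\cdot a_n^{(2-p)/2}b_n^{-(2-p)/2} \le (\sum a_n^{p/2}b_n^{(2-p)/2})^{?}\cdots$. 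I would run Hölder with exponents $2/p$ and $2/(2-p)$ on the split $a_n = (a_n^{p/2}b_n^{(2-p)/2})^{2/2}\cdot(a_n/b_n)^{(2-p)/2}\cdot$ — no: split $\sum_n a_n = \sum_n (a_n^{p/2} b_n^{(2-p)/2}) \cdot (a_n^{(2-p)/2} b_n^{-(2-p)/2})$ is not a clean Hölder pairing either. The clean one is: apply Hölder to $\sum_n a_n^{p/2}b_n^{(2-p)/2}$ with exponents $2/p$, $2/(2-p)$ to get the trivial upper bound $(\sum a_n)^{p/2}(\sum b_n)^{(2-p)/2}$; for the \emph{lower} bound one needs the reverse, which follows from the two-sided estimate on a suitably \emph{localized} portion of the sum near the boundary — i.e., one shows that a controlled fraction of the "mass" of $\sum b_n$ is carried by indices $n$ where $a_n$ is also comparably large, which is a consequence of the concrete kernel structure \eqref{FRK}. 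The hard part will be exactly this: showing that the indices contributing to $\sum|e_n'(z)|^2$ cannot be "disjoint" from those contributing to $\sum|e_n(z)|^2$, which one extracts by testing the basis against the normalized reproducing kernel $k^{\alpha}_z$ and its derivative simultaneously and invoking that, as $|z|\to 1^-$, $k^{\alpha}_z$ and $j^{\alpha}_z$ have a nontrivial inner product (indeed $\langle k^{\alpha}_z, j^{\alpha}_z\rangle_{\Da} = \overline{(k^{\alpha}_z)'(z)}/\|J^\alpha_z\|_{\Da}$, which one computes from \eqref{FRK}–\eqref{RKDA} to be of order $(1-|z|^2)^{-1}/\big(\|K^\alpha_z\|_{\Da}\,\|J^\alpha_z\|_{\Da}\big)\cdot\|K^\alpha_z\|_{\Da}$, i.e. bounded below). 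This overlap, combined with Cauchy–Schwarz in the form $\big|\sum_n e_n(z)\overline{e_n'(z)}\big| = |J^\alpha_z(z)| \le \big(\sum_n|e_n(z)|^p|e_n'(z)|^{2-p}\big)^{1/2}\big(\sum_n|e_n(z)|^{2-p}|e_n'(z)|^p\big)^{1/2}$ and a symmetric estimate, yields the result. I expect the last displayed Cauchy–Schwarz splitting together with interpolation between the two extreme intrinsic sums to be the crux, with the boundary asymptotics \eqref{RKDA}, \eqref{Id2} supplying all the explicit exponents.
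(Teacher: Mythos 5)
You have found the paper's central mechanism: since $\{e_n\}$ is a basis (not just an orthonormal set), the non-intrinsic sum can be attacked through the diagonal identity $\sum_n e_n(z)\overline{e'_n(z)}=J^{\alpha}_z(z)$, followed by a H\"older-type splitting of $|e_n(z)||e'_n(z)|$. This is exactly how the paper argues. Two points, however, keep your sketch from being a proof of the lemma as stated. First, the lower bound $|J^{\alpha}_z(z)|\gtrsim (1-|z|^2)^{-1-\alpha}$ is asserted only obliquely (via an inner product of normalized kernels) and never established; it is the engine of the whole argument and must be computed, e.g.\ from \eqref{FRK}, which gives $J^{\alpha}_z(z)=\frac{1-(1-|z|^2)^{1+\alpha}}{(1+\alpha)\bar z(1-|z|^2)^{1+\alpha}}$, hence the claimed size as $|z|\to 1^-$ (here $\alpha\ge 0$ is used).

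Second, and more seriously, the splitting you commit to at the end is lossy at $\alpha=0$. You propose the symmetric Cauchy--Schwarz bound $|J^{\alpha}_z(z)|\le \big(\sum_n|e_n|^p|e'_n|^{2-p}\big)^{1/2}\big(\sum_n|e_n|^{2-p}|e'_n|^{p}\big)^{1/2}$ and then control the dual factor by interpolating between the two intrinsic sums, i.e.\ $\sum_n|e_n|^{2-p}|e'_n|^{p}\le \big(\sum_n|e_n|^2\big)^{(2-p)/2}\big(\sum_n|e'_n|^2\big)^{p/2}$. For $\alpha>0$ this does reproduce the exponent $p-2-\alpha$, but for $\alpha=0$ one has $\sum_n|e_n(z)|^2=K^{0}_z(z)\asymp\log\frac{e}{1-|z|^2}$ by \eqref{RKDA}, and your chain only yields $\sum_n|e_n|^p|e'_n|^{2-p}\gtrsim (1-|z|^2)^{p-2}\big(\log\frac{e}{1-|z|^2}\big)^{-(2-p)/2}$, which is strictly weaker than the statement; the case $\alpha=0$ is precisely the one needed for the Dirichlet-space applications (Proposition \ref{nece} with $\alpha=0$, hence Theorem \ref{th:3}(a), and Theorem \ref{FP}(b)). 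The paper avoids this by splitting asymmetrically: H\"older with exponents $p$ and $p'$ applied to $\sum_n|e_n||e'_n|$ pairs the target sum with $\big(\sum_n|e'_n|^2\big)^{1/p'}$ alone, and $\sum_n|e'_n(z)|^2\le\|J^{\alpha}_z\|_{\Da}^2=(1-|z|^2)^{-2-\alpha}$ by \eqref{eqRK1} and \eqref{Id2} carries no logarithm, so the exponent $(-1-\alpha)p+(2+\alpha)(p-1)=p-2-\alpha$ comes out exactly for all $\alpha\ge 0$ (and $p=1$ needs no H\"older at all). With that replacement your outline becomes the paper's proof; as written, it leaves a genuine gap at $\alpha=0$.
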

\begin{proof}
\par Let $\{e_n\}$ be any orthonormal basis of $\Da$. From \eqref{RKdotformula} and \eqref{FRK} we have
\begin{displaymath}
\sum_ n \overline{e'_ n(z)}
\,e_ n(z)=J^{\alpha}_{z}(z)=\int_{0}^{z} \frac{d\eta}{(1-\bar{z}\eta)^{2+\alpha}}=\frac{1-(1-|z|^2)^{1+\alpha}}{(1+\alpha)\,\bar{z}\,(1-|z|^2)^{1+\alpha}},
\end{displaymath}
and, since $\alpha\ge 0$, we obtain
\begin{displaymath}
\begin{split}
|z| \,(1-|z|^2)^{-1-\alpha} &\le (1+\alpha)\,\sum_ n |e_ n(z)|\,|e'_ n(z)|,
\end{split}
\end{displaymath}
which gives the result for $p=1$. If $1<p<2$, using H\"{o}lder's inequality
\begin{displaymath}
\begin{split}
\frac{|z|}{1+\alpha}\,(1-|z|^2)^{-1-\alpha} &\le \sum_ n |e_ n(z)|\,|e'_ n(z)|
\\
&\le \left (\sum_ n |e_ n(z)|^p\,|e'_ n(z)|^{2-p}\right )^{1/p}\left
(\sum_ n |e'_ n(z)|^2\right )^{1/p'}
\\
&\le C\left (\sum_ n |e_ n(z)|^p\,|e'_ n(z)|^{2-p}\right )^{1/p}
(1-|z|^2)^{-(2+\alpha)/p'},
\end{split}
\end{displaymath}
where the last inequality follows from \eqref{eqRK1} and \eqref{Id2}.
From here one obtains the corresponding inequality. The proof is
complete.
\end{proof}

We shall also use several times the following integral estimate (see \cite{Zhu}) that has become
indispensable in this area of analysis.
\begin{otherl}\label{Ict}
Suppose $z\in\D$, $c\ge 0$  and $t>-1$. The integral
$$I_{c,t}(z)=\int_{\D}\frac{(1-|w|^2)^t}{|1-\bar{w}z|^{2+t+c}}\,dA(w)$$
is comparable to $(1-|z|^2)^{-c}$ if $c>0$, and to $\log \frac{1}{1-|z|^2}$ if $c=0$.
\end{otherl}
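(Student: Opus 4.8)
The plan is to reduce to the case $z=r\in[0,1)$ by rotation invariance and then evaluate $I_{c,t}$ exactly as a power series in $|z|^2$, whose coefficients are controlled by Stirling's formula. For the first reduction, write $z=|z|e^{i\psi}$ and perform the change of variables $w\mapsto e^{i\psi}w$; since $dA$ and $|w|$ are rotation invariant, this turns $I_{c,t}(z)$ into $\int_{\D}(1-|w|^2)^t\,|1-|z|\bar w|^{-(2+t+c)}\,dA(w)$, which depends only on $|z|$, so we may assume $z=r$ with $0\le r<1$.

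Next I would compute the series. Put $a=\tfrac{2+t+c}{2}$, which is $>0$ because $t>-1$ and $c\ge0$, so that $|1-\bar w z|^{2+t+c}=(1-\bar w z)^{a}(1-w\bar z)^{a}$. Expanding each factor by the binomial series $(1-\zeta)^{-a}=\sum_{n\ge0}\frac{\Gamma(n+a)}{n!\,\Gamma(a)}\zeta^{n}$ (valid for $|\zeta|<1$, hence for $\zeta=\bar w z$ and $\zeta=w\bar z$), multiplying the two expansions, and integrating against $(1-|w|^2)^t\,dA(w)$ — term-by-term integration being legitimate by absolute convergence for fixed $r<1$, since $\int_{\D}(1-|w|^2)^t(1-r|w|)^{-2a}\,dA(w)<\infty$ — all off-diagonal terms vanish by orthogonality of $\{e^{in\theta}\}$, while the diagonal terms bring in the Beta-type moments
\[
\int_{\D}|w|^{2n}(1-|w|^2)^t\,dA(w)=\frac{n!\,\Gamma(t+1)}{\Gamma(n+t+2)}.
\]
This yields the exact identity
\[
I_{c,t}(r)=\frac{\Gamma(t+1)}{\Gamma(a)^2}\sum_{n\ge0}\frac{\Gamma(n+a)^2}{n!\,\Gamma(n+t+2)}\,r^{2n}.
\]

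It then remains to estimate the coefficients and sum up. By Stirling's formula, $\frac{\Gamma(n+s)}{\Gamma(n+1)}\asymp(n+1)^{s-1}$ for any fixed $s>0$, uniformly in $n\ge0$; applying this with $s=a$ and $s=t+2$ gives
\[
\frac{\Gamma(n+a)^2}{n!\,\Gamma(n+t+2)}\asymp(n+1)^{2a-1-(t+1)}=(n+1)^{c-1},
\]
so $I_{c,t}(r)\asymp\sum_{n\ge0}(n+1)^{c-1}r^{2n}$. The proof concludes with the elementary fact that $\sum_{n\ge0}(n+1)^{\beta}x^n\asymp(1-x)^{-\beta-1}$ when $\beta>-1$ and $\asymp\log\frac{1}{1-x}$ when $\beta=-1$, used here with $\beta=c-1$ and $x=r^2=|z|^2$. (For $c=0$ the stated comparison is understood near the boundary; it holds on all of $\D$ after replacing $\log\frac{1}{1-|z|^2}$ by $\log\frac{e}{1-|z|^2}$, as is done elsewhere in the paper.)

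The one place requiring genuine care is obtaining \emph{two-sided}, $n$-uniform bounds on the Gamma quotient rather than mere asymptotics, and this is exactly where the hypotheses enter: $t>-1$ and $c\ge0$ guarantee $a>0$ and $t+2>0$, so no Gamma factor is evaluated at a nonpositive integer and Stirling applies for all $n$, including small $n$. An alternative route that avoids series is a Whitney-type decomposition of $\D$ into the cap $\{w:|1-\bar w z|\le2(1-|z|)\}$ and the dyadic shells $\{2^{k}(1-|z|)<|1-\bar w z|\le2^{k+1}(1-|z|)\}$, on each of which $(1-|w|^2)^t$ and $|1-\bar w z|$ are comparable to constants and the area is readily estimated; summing the resulting geometric series (a harmonic series when $c=0$) gives the same conclusion, at the cost of heavier bookkeeping.
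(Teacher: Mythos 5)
Your proof is correct, and it is essentially the standard argument: the paper itself states this lemma without proof, citing Zhu's book, and the proof there is exactly your rotation-plus-power-series computation with the Beta moments and Stirling estimates (your closing remark about the $c=0$, $z$ near $0$ normalization $\log\frac{e}{1-|z|^2}$ is the right caveat). Nothing further is needed.
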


\par The  useful inequality which appears below is from \cite{of}, and can be thought as a generalized version of the previous one.
\begin{otherl}\label{LI2}
Let $s>-1$, $r,t>0$, and $r+t-s>2$. If $t<s+2<r$ then, for $a,z\in \D$, we have
\begin{displaymath}
\int_{\D}\frac{(1-|w|^2)^s}{|1-\bar{w}z|^r\,|1-\bar{w}a|^t}\,dA(w)\leq
C\,\frac{(1-|z|^2)^{2+s-r}}{|1-\bar{a}z|^t}.
\end{displaymath}
\end{otherl}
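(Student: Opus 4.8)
The plan is the standard near/far decomposition combined with the Forelli--Rudin estimate of Lemma~\ref{Ict}. Write $J$ for the integral on the left--hand side. Throughout I use the elementary identity $|1-\bar w a|^2=|w-a|^2+(1-|w|^2)(1-|a|^2)$, which gives $|w-a|\le|1-\bar w a|$ and hence the ``triangle inequality''
\[
|1-\bar a z|\le|1-\bar w z|+|z|\,|w-a|\le |1-\bar w z|+|1-\bar w a|, \qquad w,a,z\in\D ,
\]
as well as $|1-\bar a z|\ge 1-|z|\ge\tfrac12(1-|z|^2)$; since $r-s-2>0$, the latter yields $|1-\bar a z|^{\,s+2-r}\le 2^{\,r-s-2}(1-|z|^2)^{\,s+2-r}$, which is the form in which I will use it.

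Put $\delta=\tfrac12|1-\bar a z|$ and split $\D=G_1\cup G_2$, where $G_1=\{w\in\D:|1-\bar w a|\ge\delta\}$ and $G_2=\{w\in\D:|1-\bar w a|<\delta\}$, so $J=J_1+J_2$ with $J_i$ the integral over $G_i$. On $G_1$ one has $|1-\bar w a|^{-t}\le\delta^{-t}\le C|1-\bar a z|^{-t}$, and pulling this factor out and enlarging the domain to $\D$,
\[
J_1\le \frac{C}{|1-\bar a z|^{t}}\int_{\D}\frac{(1-|w|^2)^s}{|1-\bar w z|^{r}}\,dA(w)\le \frac{C\,(1-|z|^2)^{s+2-r}}{|1-\bar a z|^{t}},
\]
by Lemma~\ref{Ict} applied with weight exponent $s>-1$ and $c=r-s-2>0$ (so that $2+s+c=r$). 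On $G_2$ the triangle inequality gives $|1-\bar w z|\ge|1-\bar a z|-|1-\bar w a|>\delta$, hence $|1-\bar w z|^{-r}\le\delta^{-r}\le C|1-\bar a z|^{-r}$, so
\[
J_2\le \frac{C}{|1-\bar a z|^{r}}\int_{\{w\in\D\,:\,|1-\bar w a|<\delta\}}\frac{(1-|w|^2)^s}{|1-\bar w a|^{t}}\,dA(w).
\]

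The only estimate requiring a little care is the integral over the Carleson--type region in the bound for $J_2$: for $s>-1$ and $t<s+2$ one has, uniformly in $a\in\D$ and $\tau>0$,
\[
\int_{\{w\in\D\,:\,|1-\bar w a|<\tau\}}\frac{(1-|w|^2)^s}{|1-\bar w a|^{t}}\,dA(w)\le C\,\tau^{\,s+2-t}.
\]
This follows from the area bound $\int_{\{|1-\bar w a|<\tau\}\cap\D}(1-|w|^2)^s\,dA(w)\le C\tau^{\,s+2}$ (valid for $s>-1$, and proved, as in the proof of Lemma~\ref{Ict}, by an explicit computation in polar coordinates, splitting according to whether $\tau\le 1-|a|$ or not), together with a layer--cake decomposition over the level sets $\{|1-\bar w a|<\rho\}$, $0<\rho<\tau$; the hypothesis $t<s+2$ is exactly what makes $\int_0^{\tau}\rho^{\,s+1-t}\,d\rho$ converge with the correct scaling. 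Applying this with $\tau=\delta$ and recalling $\delta=\tfrac12|1-\bar a z|$ gives $J_2\le C|1-\bar a z|^{-r}\delta^{\,s+2-t}= C\,|1-\bar a z|^{\,s+2-r}\,|1-\bar a z|^{-t}\le C\,(1-|z|^2)^{s+2-r}\,|1-\bar a z|^{-t}$ by the first paragraph. Adding the bounds for $J_1$ and $J_2$ completes the proof.

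As for difficulty, there is no real obstacle: the argument is the usual split into the region where $w$ is far from $a$ — controlled by a plain application of Lemma~\ref{Ict} in the variable $z$ — and the small Carleson box around $a$ — controlled by a crude bound in $z$ plus a sharp size estimate of the box in $w$. The role of the hypotheses is transparent: $s+2<r$ produces the genuine singularity $(1-|z|^2)^{s+2-r}$ and makes the final comparison run the right way, while $t<s+2$ is precisely what makes the box integral finite with the correct exponent. (The stated hypothesis $r+t-s>2$ is in fact automatic once $s+2<r$ and $t>0$.)
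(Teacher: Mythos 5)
Your argument is correct, and there is nothing in the paper to compare it against line by line: the paper does not prove this lemma at all, it simply imports it from Ortega--F\`abrega \cite{of}. Your proof is the standard self-contained route, and every step checks out. The decomposition of $\D$ according to whether $|1-\bar wa|$ is larger or smaller than $\tfrac12|1-\bar az|$ is legitimate thanks to the identity $|1-\bar wa|^2=|w-a|^2+(1-|w|^2)(1-|a|^2)$ and the resulting inequality $|1-\bar az|\le|1-\bar wz|+|1-\bar wa|$; on the far region the factor $|1-\bar wa|^{-t}$ is frozen and Lemma~\ref{Ict} applies with exponent $c=r-s-2>0$, which is exactly where $s+2<r$ enters; on the near region the crude bound $|1-\bar wz|^{-r}\lesssim|1-\bar az|^{-r}$ plus the Carleson-box estimate
\begin{displaymath}
\int_{\{w\in\D:\,|1-\bar wa|<\tau\}}\frac{(1-|w|^2)^s}{|1-\bar wa|^t}\,dA(w)\le C\,\tau^{s+2-t}
\end{displaymath}
(valid for $s>-1$, $t<s+2$, by the area bound $\lesssim\tau^{s+2}$ on dyadic shells $2^{-k-1}\tau\le|1-\bar wa|<2^{-k}\tau$ and summing the geometric series in $2^{-k(s+2-t)}$) gives $|1-\bar az|^{s+2-r-t}$, and the final passage to $(1-|z|^2)^{s+2-r}|1-\bar az|^{-t}$ uses $|1-\bar az|\ge\tfrac12(1-|z|^2)$ together with $s+2-r<0$, precisely as you say. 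Your closing remark that $r+t-s>2$ is redundant given $t>0$ and $s+2<r$ is also accurate. In short: a complete and correct proof of a statement the paper only cites, obtained by the near/far split around the point $a$ and two applications of Forelli--Rudin type estimates.
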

\mbox{}
\par
For $z\in \D$ and
$r>0$, let
$$D(z,r)=\{w\in \D:\beta(z,w)<r\}$$ denote the hyperbolic disk
with center $z$ and radius $r$. Here $\beta(z,w)$ is the Bergman
or hyperbolic metric on $\D$.
\par We also need the concept of an $r$-lattice in the Bergman metric.
Let $r>0$. A sequence $\{a_ k\}$ of points in $\D$ is called an
$r$-lattice, if the unit disk is covered by the Bergman metric
disks $\{D_ k:=D(a_ k, r)\}$, and $ \beta(a_ i, a_ j)\ge r/2$ for all
$i$ and $j$ with $i \ne j$. If $\{a_ k\}$ is an $r$-lattice in
$\D$, then it also has the following property:  for any $R > 0$
there exists a positive integer $N$ (depending on $r$ and $R$)
such that every point in $\D$ belongs to at most $N$ sets in
$\{D(a_ k,R)\}$. There are elementary constructions of
$r$-lattices in $\D$. See \cite[Chapter 4]{Zhu} for example.

\section{Case $0<\alpha<1$.}\label{smain}
Before embarking on the proof of Theorem \ref{th:nuevo}, some preliminary results of interest on their own must be proved.
\subsection{A new class of spaces}
\par In this subsection, we display several nesting properties of $\Xpa$ and $B_p$  spaces.
 We offer a proof of \eqref{eq:BPC}, which gives under those restrictions an equivalent $B_p$-norm. It is worth noticing that  equivalent and useful $B_p$-norms (see \cite{AFP} and \cite{BP} for example) have been previously introduced  for the study of  operators on different spaces of analytic functions on $\D$. Also, our next result proves that $\Xpa\subsetneq B_p$ if $0<\alpha<1$ and  $p(1-\alpha)\ge 2$.
In fact,
  $B_p\subset \Da$ if $p(1-\alpha)<2$, and this is no
longer true when $p(1-\alpha)\ge 2$.
\begin{prop}\label{pr:xpaproperties}
Let $1<p<\infty$ and $\alpha\ge 0$. Then
\begin{enumerate}
\item[\rm(i)] $\Xpa\subset \Da \cap B_ p$.
\item[\rm(ii)] If $p<q$, then $\Xpa\subset X^q_\a$.
\item[\rm(iii)] If $0\le \alpha<\gamma$, then $\Xpa\subset X^p_{\gamma}$.
\item[\rm(iv)]
Let  $\alpha>0$. If  $p(1-\alpha)<2$ then $\Xpa=B_ p$.
\end{enumerate}
\end{prop}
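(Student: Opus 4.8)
The plan is to get (i)--(iii) from direct pointwise estimates plus Lemma \ref{Ict}, and to reduce (iv) to a Schur-type inequality over a Bergman lattice. Throughout write
\[F(w)\ig(1-|w|^2)^\alpha\int_{\D}\frac{|g'(z)|^2\,dA_\alpha(z)}{|1-\bar wz|^{2+2\alpha}},\]
so that $\|g\|^p_{\Xpa}=|g(0)|^p+\int_{\D}F(w)^{p/2}(1-|w|^2)^{p-2}\,dA(w)$. Two elementary facts will be used repeatedly: $F$ is comparable at hyperbolically close points (because $(1-|w|^2)^\alpha$ and $|1-\bar wz|$ change by bounded factors, uniformly in $z$, when $w$ moves inside a Bergman disk), and the sub-mean value property of $|g'|^2$ and $|g'|^p$.

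For (i), restricting the outer integral to $\{|w|<1/2\}$, where $(1-|w|^2)^\alpha$, $(1-|w|^2)^{p-2}$ and $|1-\bar wz|$ are all $\asymp1$, gives $\|g\|^p_{\Xpa}\gtrsim\bigl(\int_{\D}|g'|^2\,dA_\alpha\bigr)^{p/2}$, so $g\in\Da$; restricting the inner integral to a disk $D(w,r)$, where $1-|z|^2\asymp1-|w|^2\asymp|1-\bar wz|$, gives $F(w)\gtrsim(1-|w|^2)^{-2}\int_{D(w,r)}|g'|^2\,dA$ for every $w$, which combined with $|g'(z)|^p(1-|z|^2)^{p-2}\lesssim(1-|z|^2)^{-2}\bigl(\int_{D(z,r)}|g'|^2\,dA\bigr)^{p/2}$ yields $\|g\|^p_{B_p}\lesssim\|g\|^p_{\Xpa}$. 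For (ii), comparability of $F$ on Bergman disks gives the pointwise bound $F(w)(1-|w|^2)^2\lesssim\|g\|^2_{\Xpa}$ (average the defining integral over $D(w,r)$), and then $\int_{\D}F^{q/2}(1-|w|^2)^{q-2}\,dA=\int_{\D}F^{p/2}(1-|w|^2)^{p-2}\bigl(F(w)(1-|w|^2)^2\bigr)^{(q-p)/2}\,dA\lesssim\|g\|^{q-p}_{\Xpa}\,\|g\|^p_{\Xpa}$. For (iii), write $\dfrac{(1-|w|^2)^{\beta}(1-|z|^2)^{\beta}}{|1-\bar wz|^{2+2\beta}}=\dfrac{1}{|1-\bar wz|^{2}}\bigl(1-|\varphi_w(z)|^2\bigr)^{\beta}$; since $1-|\varphi_w(z)|^2\in(0,1]$, this is decreasing in $\beta$, hence $F_\gamma\le C\,F_\alpha$ pointwise whenever $\gamma>\alpha$, and $\Xpa\subset X^p_\gamma$ follows at once.

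For (iv) the inclusion $\Xpa\subset B_p$ is just (i), so I would concentrate on $B_p\subset\Xpa$ under $\alpha>0$ and $p(1-\alpha)<2$. Fix an $r$-lattice $\{a_k\}$ with disks $D_k=D(a_k,r)$, set $b_k=\int_{D_k}|g'|^2\,dA$ and $T_{kj}=(1-|a_k|^2)^\alpha(1-|a_j|^2)^\alpha|1-\bar a_ka_j|^{-2-2\alpha}$. Using comparability of $F$ on Bergman disks and decomposing the integral defining $F(a_k)$ over a partition subordinate to the lattice, I get $\|g\|^p_{\Xpa}\lesssim|g(0)|^p+\sum_k\bigl(\sum_j T_{kj}b_j\bigr)^{p/2}(1-|a_k|^2)^p$, while the sub-mean value property for $|g'|^p$ gives $\sum_k b_k^{p/2}\lesssim\|g\|^p_{B_p}$. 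Thus (iv) reduces to the sequence inequality $\sum_k\bigl(\sum_j T_{kj}b_j\bigr)^{p/2}(1-|a_k|^2)^p\lesssim\sum_j b_j^{p/2}$ for $b_j\ge0$.

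For $1<p\le2$ this follows from subadditivity of $t\mapsto t^{p/2}$, once one checks $\sum_k T_{kj}^{p/2}(1-|a_k|^2)^p\asymp1$, which is immediate from Lemma \ref{Ict} (here $\alpha>0$ is exactly what puts the relevant exponent in the convergent regime). For $p>2$ one instead applies the Schur test to the positive matrix $\bigl(T_{kj}(1-|a_k|^2)^2\bigr)_{k,j}$ on $\ell^{p/2}$, with test sequence $h_k=(1-|a_k|^2)^{-\mu}$: estimating the lattice sums by the corresponding integrals and using Lemma \ref{Ict} twice, the two Schur conditions hold provided $\mu$ lies in an explicit open interval, and a short computation shows that interval is nonempty precisely when $p(1-\alpha)<2$. (Alternatively one runs the same Schur test directly for the integral operator with kernel $(1-|w|^2)^{\alpha+2(p-2)/p}(1-|z|^2)^{\alpha-2(p-2)/p}|1-\bar wz|^{-2-2\alpha}$ on $L^{p/2}(dA)$, which is essentially the Luecking-type estimate announced in the introduction.) The delicate point, and the one I expect to be the main obstacle, is exactly this last bookkeeping identifying $p(1-\alpha)<2$ as the sharp range in which the Schur weight can be chosen; everything else is routine manipulation with Lemma \ref{Ict}. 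Once $\Xpa=B_p$ is established, combining it with (i) also recovers $B_p\subset\Da$ for $p(1-\alpha)<2$.
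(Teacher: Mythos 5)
Your proposal is correct, and while (i)--(iii) essentially reproduce the paper's argument (the paper bounds the $\Da$-norm via $|1-\bar wz|\le 2$ rather than restricting to $|w|<1/2$, derives the same pointwise bound $F(w)(1-|w|^2)^2\le C\|g\|^2_{\Xpa}$ for (ii), and uses the identity $\frac{(1-|w|^2)(1-|z|^2)}{|1-\bar wz|^2}=1-|\varphi_w(z)|^2\le 1$ for (iii)), your treatment of (iv) takes a genuinely different route in the hard range. The paper splits (iv) into $1<p\le 2$ and $p>2$: for $p\le 2$ it works at the continuous level with an $r$-lattice, subadditivity of $t\mapsto t^{p/2}$, Lemma~\ref{Ict} (where $\alpha>0$ gives the exponent $\alpha p/2>0$, exactly your observation), and then quotes Theorem~0 of \cite{AS} for $\sum_n\bigl(\int_{D_n}|g'|^2\,dA\bigr)^{p/2}\le C\|g\|^p_{B_p}$ --- a fact you instead rederive from the sub-mean value property, which is fine; for $p>2$ the paper avoids the lattice altogether and uses a direct two-step argument: H\"older's inequality with an auxiliary $\varepsilon>0$ chosen so that $p\alpha-(1+\varepsilon)(p-2)>0$, followed by Fubini and two applications of Lemma~\ref{Ict}, the exponents conspiring to give exactly the weight $(1-|z|^2)^{p-2}$. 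Your approach instead discretizes the whole $\Xpa$-norm to the matrix inequality $\sum_k\bigl(\sum_jT_{kj}b_j\bigr)^{p/2}(1-|a_k|^2)^p\le C\sum_jb_j^{p/2}$ and, for $p>2$, runs a Schur test on $\ell^{p/2}$ with weights $(1-|a_k|^2)^{\nu}$; I checked the bookkeeping you flagged: with $s=p/2$, $s'=p/(p-2)$, the two Schur conditions hold iff $(1-\alpha)/s'<\nu<(2+\alpha)/s'$ and $-(1+\alpha)/s<\nu<\alpha/s$, and this interval is nonempty precisely when $(1-\alpha)(p-2)<2\alpha$, i.e.\ $p(1-\alpha)<2$, so your argument closes. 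What each approach buys: the paper's continuous $\varepsilon$-trick is shorter and self-contained for $p>2$, while your lattice/Schur scheme unifies the two cases, makes the sharpness of the threshold $p(1-\alpha)<2$ transparent (it is exactly the nonemptiness of the admissible Schur-weight interval), and is the same mechanism as Luecking's Toeplitz argument \cite{LJFA87}, so it transfers verbatim to general measures as in Theorem~\ref{th:TC}; its only cost is the routine but necessary care in passing between lattice sums and integrals (separation for the upper bound, comparability of the kernel on Bergman disks), which your write-up correctly identifies but leaves implicit.
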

\begin{proof}
For $a\in \D$ fixed, let $D(a):=\big\{z: |z-a|<\frac{1-|a|}{2}\big\}$.
\par (i)\,If $g\in \Xpa$, then the subharmonicity of $|g'|^2$ together with the fact that $|1-\bar{w}z|\asymp (1-|w|^2)$ for $z\in D(w)$ implies that $g\in B_ p$. Also, since $|1-\bar{w}z|\le 2$,
\begin{displaymath}
\begin{split}
||g||^p_{\Xpa}&\ge 2^{-(1+\alpha)p} \int_{\D} \left (\int_{\D} |g'(z)|^2\,dA_{\alpha}(z)\right)^{p/2}(1-|w|^2)^{p-2+\frac{\alpha p}{2}}\,dA(w)
\\
&=C_{p,\alpha}\|g'\|^p_{A^2_{\alpha}}.
\end{split}
\end{displaymath}
This shows that $\Xpa\subset \Da$ proving (i).

\par (ii) Assume that $g\in \Xpa$. Fix $a\in \D$. If $w\in D(a)$, then $(1-|w|)\asymp (1-|a|)$ and $|1-\bar{w}z|\asymp |1-\bar{a}z|$ for $z\in \D$. So
\begin{displaymath}
\begin{split}
||g||^p_{\Xpa}&\ge \int_{D(a)}\left (\int_{\D} \frac{|g'(z)|^2\,dA_{\alpha}(z)}{|1-\bar{w}z|^{2+2\alpha}}\right )^{p/2} (1-|w|^2)^{p-2+\alpha p/2}\,dA(w)
\\
&\ge C (1-|a|^2)^{p+\alpha p/2}\left (\int_{\D} \frac{|g'(z)|^2\,dA_{\alpha}(z)}{|1-\bar{a}z|^{2+2\alpha}}\right )^{p/2}.
\end{split}
\end{displaymath}
This gives
\begin{equation}\label{sup}
\sup_{a\in \D} (1-|a|^2)^{2+\alpha}\int_{\D} \frac{|g'(z)|^2\,dA_{\alpha}(z)}{|1-\bar{a}z|^{2+2\alpha}}<\infty,
\end{equation}
and  it follows easily that $||g||^q_{X^q_\a}\le C ||g||^p_{\Xpa}$ for $q>p$.
\par (iii) follows from the inequality $\sup_{z\in\D}\frac{(1-|z|^2)(1-|w|^2)}{|1-\overline{w}z|^2}\le 1$.
\par (iv)  The inclusion $X^p_\alpha\subset B_p$ follows from  (i).
\par Conversely, suppose that $g\in B_ p$. Assume first that $p>2$.
 Since $p\a>p-2$, we can choose $\varepsilon>0$ with $p\a-(1+\varepsilon)(p-2)>0$. Then, using H\"{o}lder's inequality and Lemma \ref{Ict}, we obtain
\begin{displaymath}
\begin{split}
\left (\int_{\D}\frac{|g'(z)|^2 dA_{\a}(z)}{|1-\bar{w}z|^{2+2\a}}\right)^{p/2}&\le \left (\int_{\D} \frac{|g'(z)|^p dA_{t}(z)}{|1-\bar{w}z|^{2+p\a}}\right )\!
\left (\int_{\D}\frac{(1-|z|^2)^{-1+\varepsilon} dA(z)}{|1-\bar{w}z|^{2}}\right)^{\frac{p-2}{2}}
\\
&
\le C\left (\int_{\D} \frac{|g'(z)|^p dA_{t}(z)}{|1-\bar{w}z|^{2+p\a}}\right )
\,(1-|w|^2)^{(-1+\varepsilon)\frac{p-2}{2}},
\end{split}
\end{displaymath}
where $t=\frac{(1-\varepsilon)(p-2)+\a p}{2}$. This gives
\begin{displaymath}
||g-g(0)||^p_{X^p_\a}\le C \int_{\D} |g'(z)|^p \left (\int_{\D} \frac{(1-|w|^2)^{\beta}}{|1-\bar{w}z|^{2+p\a}} dA(w)\right ) dA_{t}(z)
\end{displaymath}
with $\beta=\frac{(1+\varepsilon)(p-2)+\a p}{2}$. Note that the choice of $\varepsilon$ gives $p\a>\beta$, and therefore we can use Lemma \ref{Ict} again in order to obtain
\begin{displaymath}
||g-g(0)||^p_{X^p_\a}\le C \!\int_{\D} \!|g'(z)|^p\, (1-|z|^2)^{t+\beta-p\a} dA(z)=C\!\int_{\D}\! |g'(z)|^p\,(1-|z|^2)^{p-2}dA(z).
\end{displaymath}
\par Now assume that $1<p\le 2$.   Fix an $r$-lattice $\{a_ n\}$ with associated hyperbolic disks $\{D_ n\}$. Then
\begin{displaymath}
\begin{split}
||g-g(0)||^p_{X^p_\a}&\le \int_{\D}\left (\sum_ n \int_{D_ n} \frac{|g'(z)|^2\,dA_{\alpha}(z)}{|1-\bar{w}z|^{2+2\alpha}}\right )^{p/2}\!\!\!(1-|w|^2)^{p-2+\frac{\alpha p}{2}}\,dA(w)
\\
&\asymp \int_{\D}\!\!\left (\sum_ n \frac{(1-|a_ n|^2)^{\alpha}}{|1-\bar{a}_ n w|^{2+2\alpha}}\int_{D_ n} \!\!\!|g'(z)|^2 dA(z) \!\! \right )^{p/2}\!\!\!\!(1-|w|^2)^{p-2+\frac{\alpha p}{2}}\,dA(w)
\\
&
\le \int_{\D}\sum_ n \!\frac{(1-|a_ n|^2)^{p\alpha /2}}{|1-\bar{a}_ n w|^{p+p\alpha}}\left (\!\int_{D_ n} \!\!\!|g'(z)|^2 dA(z) \!\!\right )^{p/2}\!\!\!\!(1-|w|^2)^{p-2+\frac{\alpha p}{2}}\,dA(w).
\end{split}
\end{displaymath}
Now, passing the sum outside the integral and using Lemma \ref{Ict} we get
\begin{displaymath}
\begin{split}
||g-g(0)||^p_{X^p_\a}&\le \sum_ n (1-|a_ n|^2)^{p\alpha /2}\left (\int_{D_ n} \!\!\!|g'(z)|^2 dA(z) \!\right )^{p/2}\!\!\!\int_{\D}\!\frac{(1-|w|^2)^{p-2+\frac{\alpha p}{2}}\,dA(w)}{|1-\bar{a}_ n w|^{p+p\alpha}}
\\
& \le \sum_ n \left (\int_{D_ n} |g'(z)|^2\,dA(z)\right )^{p/2}<\infty,
\end{split}
\end{displaymath}
where the last step follows from Theorem 0 of \cite{AS} (see also \cite{ZhuJMAA91}).
This completes the proof.
\end{proof}

\subsection{Proof of Theorem \ref{th:nuevo}.}
\par The sufficiency for the case $1<p\le 2$, and the necessity for $2\le p<\infty$ is a byproduct of the following result, which also gives
some information on the case $p(1-\alpha)>4$.

\begin{prop}\label{Pr4} Let $g\in H(\D)$ and $\alpha\ge 0$.

\begin{enumerate}
\item[\rm(i)] If $1<p\le 2$  and $g\in \Xpa$,
then $T_g\in \mathcal{S}_p(\Da)$.
\item[\rm(ii)] If $2\le p<\infty$ and $T_g\in S_p(\Da)$ then  $g\in \Xpa$.
\end{enumerate}
\end{prop}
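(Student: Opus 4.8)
The plan is to reduce both assertions, via Proposition~\ref{pr:gen}, to a pointwise two-sided estimate for the functions $J^{\alpha}_z$. Since $(T_gf)'=fg'$ and $(T_gf)(0)=0$, every $f\in\Da$ satisfies $\|T_gf\|_{\Da}^2=\int_{\D}|f(w)|^2|g'(w)|^2\,dA_{\alpha}(w)$; taking $f=j^{\alpha}_z$ and using \eqref{Id2},
\[
\|T_g j^{\alpha}_z\|_{\Da}^2=(1-|z|^2)^{2+\alpha}\int_{\D}|J^{\alpha}_z(w)|^2\,|g'(w)|^2\,dA_{\alpha}(w).
\]
On the other hand, rewriting the outer weight in \eqref{eq:xpa} through $(1-|w|^2)^{p-2}\,dA(w)=(1-|w|^2)^{p}\,d\lambda(w)$ shows that $g\in\Xpa$ if and only if
\[
\int_{\D}\Big((1-|z|^2)^{2+\alpha}\int_{\D}\frac{|g'(w)|^2\,dA_{\alpha}(w)}{|1-\bar z w|^{2+2\alpha}}\Big)^{p/2}d\lambda(z)<\infty .
\]
So, once $|J^{\alpha}_z(w)|^2$ is compared with $|1-\bar z w|^{-(2+2\alpha)}$, Proposition~\ref{pr:gen}(ii) will give (i) and Proposition~\ref{pr:gen}(i) will give (ii).

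From \eqref{FRK} one has $J^{\alpha}_z(w)=\int_0^w(1-\bar z\eta)^{-(2+\alpha)}\,d\eta=((1-\bar z w)^{-(1+\alpha)}-1)/((1+\alpha)\bar z)$, hence
\[
|J^{\alpha}_z(w)|=\frac{1}{(1+\alpha)|z|}\cdot\frac{\big|\,1-(1-\bar z w)^{1+\alpha}\,\big|}{|1-\bar z w|^{1+\alpha}}.
\]
Writing $1-(1-\zeta)^{1+\alpha}=(1+\alpha)\zeta\int_0^1(1-t\zeta)^{\alpha}\,dt$ gives at once $|1-(1-\zeta)^{1+\alpha}|\le C_{\alpha}|\zeta|$ on $\overline{\D}$ (since $|1-t\zeta|\le2$), and, for $0\le\alpha\le1$, also $|1-(1-\zeta)^{1+\alpha}|\ge c_{\alpha}|\zeta|$, because $\re\int_0^1(1-t\zeta)^{\alpha}\,dt$ is continuous and strictly positive on $\overline{\D}$ (indeed $\re(1-t\zeta)\ge0$ forces $\arg(1-t\zeta)^{\alpha}\in(-\alpha\pi/2,\alpha\pi/2)\subset(-\pi/2,\pi/2)$). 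Putting $\zeta=\bar z w$ and noting that the external factor $|z|^{-1}$ exactly cancels the $|z|$ inside $|\zeta|=|z||w|$, we obtain the $z$-free estimates
\[
|J^{\alpha}_z(w)|^2\le\frac{C}{|1-\bar z w|^{2+2\alpha}}\quad(\alpha\ge0),\qquad
|J^{\alpha}_z(w)|^2\ge\frac{c\,|w|^2}{|1-\bar z w|^{2+2\alpha}}\quad(0\le\alpha\le1).
\]
(For $\alpha>1$ the space $\Da$ is a weighted Bergman space, or $H^2$ if $\alpha=1$, and the lower bound, hence (ii), is either classical or follows from a finer version of the same elementary fact.)

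To prove (i), let $g\in\Xpa$ with $1<p\le2$; then $g\in X^2_{\alpha}$ by Proposition~\ref{pr:xpaproperties}(ii). Since $X^2_{\alpha}$ is exactly the class of symbols with $T_g\in\mathcal{S}_2(\Da)$ (the Hilbert--Schmidt identity $\sum_n\|T_ge_n\|_{\Da}^2=\int_{\D}K^{\alpha}_z(z)|g'(z)|^2\,dA_{\alpha}(z)$, cf.~\eqref{HS}, is comparable to $\|g\|_{X^2_{\alpha}}^2$ modulo $|g(0)|^2$ by \eqref{RKDA} and Lemma~\ref{Ict}), $T_g\in\mathcal{S}_2(\Da)$; in particular $T_g$ is bounded and compact, so Proposition~\ref{pr:gen}(ii) applies. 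Combining it with the upper bound above and the first paragraph,
\[
\|T_g\|_{\mathcal{S}_p}^p\le\|T_g\|^p+(1+\alpha)\int_{\D}\|T_g j^{\alpha}_z\|_{\Da}^p\,d\lambda(z)\le\|T_g\|^p+C\,\|g\|_{\Xpa}^p<\infty,
\]
so $T_g\in\mathcal{S}_p(\Da)$.

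To prove (ii), let $T_g\in\mathcal{S}_p(\Da)$ with $p\ge2$; testing on $f\equiv1$ gives $\int_{\D}|g'|^2\,dA_{\alpha}=\|T_g1\|_{\Da}^2\le\|T_g\|^2<\infty$, so $g\in\Da$. By Proposition~\ref{pr:gen}(i), $\int_{\D}\|T_g j^{\alpha}_z\|_{\Da}^p\,d\lambda(z)<\infty$, and by the lower bound above, with $I(z):=\int_{\D}|1-\bar z w|^{-(2+2\alpha)}|g'(w)|^2\,dA_{\alpha}(w)$, one has $\|T_g j^{\alpha}_z\|_{\Da}^2\gtrsim(1-|z|^2)^{2+\alpha}\int_{\D}|w|^2\,|1-\bar z w|^{-(2+2\alpha)}|g'(w)|^2\,dA_{\alpha}(w)$. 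Since $g\in\Da$, the part of $I(z)$ coming from $|w|<\tfrac12$ is bounded by $C\|g'\|_{A^2_{\alpha}}^2$ uniformly in $z$ (there $|1-\bar z w|>\tfrac12$), while over $|w|\ge\tfrac12$ the extra factor $|w|^2$ is harmless; hence $(1-|z|^2)^{2+\alpha}I(z)\lesssim\|T_g j^{\alpha}_z\|_{\Da}^2+(1-|z|^2)^{2+\alpha}$. Raising to the power $p/2$ (permissible since $p\ge2$), integrating $d\lambda(z)$, and using $\int_{\D}(1-|z|^2)^{(2+\alpha)p/2}\,d\lambda(z)<\infty$, we get $\int_{\D}\big((1-|z|^2)^{2+\alpha}I(z)\big)^{p/2}\,d\lambda(z)<\infty$, which by the first paragraph says exactly that $g\in\Xpa$. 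The one genuinely analytic ingredient, and the expected main obstacle, is the lower estimate $|1-(1-\zeta)^{1+\alpha}|\gtrsim|\zeta|$ on $\overline{\D}$: it forces $|J^{\alpha}_z(w)|^2\asymp|w|^2|1-\bar z w|^{-(2+2\alpha)}$ \emph{globally} rather than only on hyperbolic disks, which is exactly why $\Xpa$ (and not merely $B_p$) is the correct class once $p(1-\alpha)\ge2$; everything else is bookkeeping with Lemma~\ref{Ict} and the two propositions of Section~\ref{s2}.
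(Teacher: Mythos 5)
Your argument is the same as the paper's: the paper's proof consists precisely of the comparison $\|g\|^p_{\Xpa}\asymp\int_{\D}\|T_g j^{\alpha}_z\|^p_{\Da}\,d\lambda(z)$ followed by an appeal to Proposition \ref{pr:gen}, and you have reconstructed exactly that, supplying the details the paper leaves implicit: the explicit formula $J^{\alpha}_z(w)=\bigl((1-\bar zw)^{-(1+\alpha)}-1\bigr)/\bigl((1+\alpha)\bar z\bigr)$, the two-sided estimate $|J^{\alpha}_z(w)|\asymp |w|\,|1-\bar zw|^{-(1+\alpha)}$, the a priori compactness of $T_g$ (via the Hilbert--Schmidt identification of $X^2_{\alpha}$) needed before Proposition \ref{pr:gen}(ii) may be invoked, and the removal of the harmless factor $|w|^2$ using $g\in\Da$. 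All of this is correct, and it covers the full range $0\le\alpha\le 1$ that the paper actually uses.

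The one soft spot is your parenthetical treatment of $\alpha>1$ in part (ii). The claim that the lower bound \lq\lq follows from a finer version of the same elementary fact\rq\rq\ is not accurate in general: the function $h(\zeta)=\bigl(1-(1-\zeta)^{1+\alpha}\bigr)/\zeta$ is zero-free on $\overline{\D}$ only as long as $1+\alpha<6$ (the equation $(1-\zeta)^{1+\alpha}=1$ acquires solutions $\zeta=1-e^{2\pi i k/(1+\alpha)}$ with $|\zeta|\le 1$ once $\alpha\ge 5$), so the pointwise estimate $|J^{\alpha}_z(w)|\gtrsim |w|\,|1-\bar zw|^{-(1+\alpha)}$ genuinely fails for large $\alpha$ and cannot be repaired by refining your real-part argument; for $1<\alpha<5$ it does hold, but by a compactness/zero-free argument rather than positivity of $\re\int_0^1(1-t\zeta)^{\alpha}dt$. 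To close the case $\alpha>1$ (and $\alpha=1$) you should instead say explicitly what you only hint at: there $\Da=A^2_{\alpha-2}$ (resp.\ $H^2$), where $T_g\in\mathcal{S}_p$ for $p>1$ is classically equivalent to $g\in B_p$, and since $p(1-\alpha)<2$ is automatic, Proposition \ref{pr:xpaproperties}(iv) gives $B_p=\Xpa$, yielding (ii). With that sentence added, your proof is complete for all $\alpha\ge 0$, and is in essence the paper's proof with the omitted verifications written out.
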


\begin{proof} Since
\begin{displaymath}
||g||^p_{X^p_\alpha}\asymp \int_{\D}\|T_g(j^{\alpha}_z)\|_{\Da}^p\,d\lambda (z),
\end{displaymath}
the result follows directly from Proposition \ref{pr:gen}.
\end{proof}
The necessity for $1<p<2$ follows from the next Proposition and part (iv) of Proposition \ref{pr:xpaproperties}.
\begin{prop}\label{nece}
Let $0\le \alpha<1$ and $g\in H(\D)$. Then
\begin{enumerate}
\item[(i)] If $1<p<2$ and $T_ g\in \mathcal{S}_ p(\Da)$, then $g\in B_ p$.

\item[(ii)] If $T_ g\in \mathcal{S}_ 1(\Da)$, then $g$ is constant.
\end{enumerate}
\end{prop}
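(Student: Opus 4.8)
The plan is to exploit the general necessary-condition machinery of Proposition~\ref{pr:gen}(i), which holds for $p\ge 2$, by passing to the adjoint and squaring the exponent. If $T_g\in\mathcal S_p(\Da)$ with $1<p<2$, then $T_g^*T_g\in\mathcal S_{p/2}(\Da)$; but $p/2<1$, so one cannot directly apply Proposition~\ref{pr:gen}. Instead I would use Lemma~\ref{le:2}: writing the singular value decomposition $T_gf=\sum_n\lambda_n\langle f,e_n\rangle_{\Da}f_n$ with $\{e_n\}$ extendable to an orthonormal basis of $\Da$, and recalling that $T_g e_n$ has derivative $e_n g'$, one has $\|T_ge_n\|_{\Da}^2\asymp\int_\D|e_n(z)|^2|g'(z)|^2\,dA(z)$ modulo the value at $0$. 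The idea is to estimate $\sum_n\lambda_n^p$ from below by inserting the pointwise bound for $\sum_n|e_n(z)|^p|e_n'(z)|^{2-p}$ from Lemma~\ref{le:2}.

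More precisely, for $1<p<2$ I would apply Hölder's inequality in the form
\[
\sum_n|e_n(z)|^2 = \sum_n\big(|e_n(z)|^p\,|e_n'(z)|^{2-p}\big)^{2/p}\cdot\text{(something)}
\]
—actually the cleaner route is: by Hölder with exponents $2/p$ and $2/(2-p)$,
\[
\sum_n|e_n(z)|^p|e_n'(z)|^{2-p}\le\Big(\sum_n|e_n(z)|^2\Big)^{p/2}\Big(\sum_n|e_n'(z)|^2\Big)^{(2-p)/2},
\]
which combined with Lemma~\ref{le:2}, \eqref{eqRK1} and \eqref{Id2} gives a lower bound for $\sum_n|e_n(z)|^2$ of the order $(1-|z|^2)^{-\alpha}$, i.e.\ recovers \eqref{RKDA}; that by itself is not enough. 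The real mechanism should be: weight by $\lambda_n^p$. Using that $T_g^*T_g\in\mathcal S_{p/2}$ and the subadditivity of $\mathcal S_{p/2}$-type quantities for $p/2<1$, or more directly the standard fact that for $0<q<1$ one still has $\|A\|_{\mathcal S_q}^q\ge$ a sum of diagonal-type lower bounds against any orthonormal basis, I would aim to show
\[
\|T_g\|_{\mathcal S_p}^p\ge C\int_\D|g'(z)|^p\,w(z)\,d\lambda(z)
\]
for a suitable weight $w$, then identify the right-hand side with $\|g\|_{B_p}^p$ when $\alpha<1$ — here the constraint $\alpha<1$ is exactly what makes the exponents in Lemma~\ref{le:2} land in the Besov range, since $p-2-\alpha$ paired with the $d\lambda$ measure and the gain $\alpha$ from $\sum|e_n'|^2$ produces the weight $(1-|z|^2)^{p-2}$.

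For part (ii), $p=1$: here $\mathcal S_1$ is the trace class, and I would compute the trace of $T_g$ (or of $T_g^*T_g$) against the orthonormal basis $\{e_n\}$ and use the $p=1$ case of Lemma~\ref{le:2}, $\sum_n|e_n(z)||e_n'(z)|\gtrsim(1-|z|^2)^{-1-\alpha}$, to get
\[
\infty>\|T_g\|_{\mathcal S_1}\gtrsim\sum_n\int_\D|e_n(z)||e_n'(z)||g'(z)|^2\,dA(z)\gtrsim\int_\D\frac{|g'(z)|^2}{(1-|z|^2)^{1+\alpha}}\,dA(z)\cdot(1+\alpha)^{-1},
\]
which together with subharmonicity of $|g'|^2$ forces $g'\equiv 0$ (the integral $\int_\D(1-|z|^2)^{-1-\alpha}\,dA(z)$ diverges already when $g'$ is a nonzero constant, and in general a lower bound by $\sum|g'(a_n)|^2$ over an $r$-lattice, each term weighted by a divergent factor, forces every coefficient to vanish). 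The technical care is in justifying the exchange of sum and integral and in handling that $\{e_n\}$ must be completed to a basis so that equality holds in \eqref{eqRK1}; the $f(0)$-term contributes harmlessly.

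The main obstacle I anticipate is the sub-unit exponent issue in part~(i): for $1<p<2$ we have $p/2<1$, so $\mathcal S_{p/2}$ is only a quasi-Banach ideal, and the clean duality/interpolation pairing used for $p\ge 2$ in Proposition~\ref{pr:gen} is unavailable. The trick will be to avoid $T_g^*T_g$ altogether and work directly with the singular number sum $\sum\lambda_n^p$, testing against the diagonal $\langle T_g e_n, f_n\rangle$-type quantities and using the pointwise lattice lower bound of Lemma~\ref{le:2} together with Hölder's inequality with exponent $2/p>1$ to transfer the unweighted $\sum|e_n|^2$ information into a weighted estimate involving $|g'|^p$; correctly matching the weight exponent $p-2$ of the Besov norm is where the hypothesis $\alpha<1$ (equivalently $p(1-\alpha)$ staying below the relevant threshold for $p<2$) enters, and getting that bookkeeping exactly right is the delicate point.
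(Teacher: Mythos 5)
Your outline for part (i) is, in intention, the paper's own argument: take the canonical decomposition of $T_g^*T_g$ (equivalently the right singular vectors $\{e_n\}$ of $T_g$), combine the pointwise lower bound of Lemma \ref{le:2} with H\"older's inequality (exponents $2/p$ and $2/(2-p)$, using $\int_\D|e_n'|^2\,dA_\alpha\le 1$) to obtain
\[
\int_\D|g'(z)|^p(1-|z|^2)^{p-2}\,dA(z)\le C\sum_n\Big(\int_\D|g'|^2|e_n|^2\,dA_\alpha\Big)^{p/2}=C\sum_n\langle T_g^*T_ge_n,e_n\rangle_{\Da}^{p/2}=C\,\|T_g\|_{\mathcal{S}_p}^p .
\]
But the places where you hedge are exactly where the content lies, and one auxiliary claim you invoke is false. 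The ``standard fact'' that for $0<q<1$ one has $\|A\|_{\mathcal{S}_q}^q\ge\sum_n\langle Ae_n,e_n\rangle^q$ for \emph{any} orthonormal basis is wrong: for a positive operator and $0<q<1$ the inequality reverses (by concavity/majorization the diagonal sums dominate the quasi-norm; test a rank-one projection in a rotated basis). What rescues the argument is that one uses precisely the eigenbasis of $T_g^*T_g$, for which $\langle T_g^*T_ge_n,e_n\rangle=\lambda_n$ is an identity; and to apply Lemma \ref{le:2} one must know these eigenvectors form an orthonormal \emph{basis} --- the paper checks that otherwise there is a unit vector $e$ with $\int_\D|g'|^2|e|^2\,dA_\alpha=0$, forcing $g'\equiv0$ (equivalently, completion vectors lie in $\ker T_g$ and contribute nothing). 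Your ``extendable to a basis'' remark flags this but does not resolve it. Also, your claim that $\alpha<1$ is what makes the exponents match is off: the bookkeeping works for every $\alpha\ge0$, because the loss $(1-|z|^2)^{-\alpha}$ in Lemma \ref{le:2} is cancelled by the weight in $\|T_ge_n\|_{\Da}^2=\int_\D|g'|^2|e_n|^2\,dA_\alpha$ (note this carries $dA_\alpha$, not $dA$ as you wrote).

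For part (ii) the pivotal inequality you propose, $\|T_g\|_{\mathcal{S}_1}\gtrsim\sum_n\int_\D|e_n||e_n'|\,|g'|^2\,dA$, is unjustified and I do not see how to prove it: pairing $|g'|^2|e_n||e_n'|$ by Cauchy--Schwarz against $\|T_ge_n\|_{\Da}=\lambda_n^{1/2}$ leaves the uncontrolled factor $\big(\int_\D|g'|^2|e_n'|^2(1-|z|^2)^{-\alpha}\,dA\big)^{1/2}$, while basis-independence of the trace only yields $\sum_n\int_\D|g'|^2|e_n|^2\,dA_\alpha<\infty$, i.e.\ roughly $g\in\Da$, far from $g$ constant. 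The correct (and simpler) route, which is what the paper does, is the $p=1$ case of the same chain as in (i): pair $\sum_n|e_n||e_n'|\ge C(1-|z|^2)^{-1-\alpha}$ with $|g'|$ to the \emph{first} power, apply Cauchy--Schwarz using $\int_\D|e_n'|^2\,dA_\alpha\le1$, and conclude $\int_\D|g'(z)|(1-|z|^2)^{-1}\,dA(z)\le C\|T_g\|_{\mathcal{S}_1}<\infty$, which forces $g'\equiv0$ because the integral means of $|g'|$ are nondecreasing in $r$ and $\int_0^1(1-r)^{-1}\,dr=\infty$.
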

\begin{proof}
Let $1\le p<2$, and assume that  $T_ g\in \mathcal{S}_ p(\Da)$. Then the positive operator $T_ g^*T_ g$
belongs to $\mathcal{S}_{p/2}(\Da)$. Without loss of generality we may
assume that $g'\neq 0$. Suppose
\[T^*_ g T_ g f=\sum_ n \lambda_ n \langle f,e_ n \rangle \,e_ n\]
is the canonical decomposition of $T_ g^*T_ g$. Then not only is
$\{e_ n\}$ an orthonormal set, it is also an orthonormal basis.
Indeed, if there is an unit vector $e\in \Da$ such that
$e\perp e_ n$ for all $n\geq 1$, then
\begin{displaymath}
\int_{\D} |g'(z)|^2 |e(z)|^2 \,dA_{\alpha}(z)= \|T_ g
e\|_{\Da}^2=\langle T_ g^*T_ g e,e\rangle_{\Da} =0
\end{displaymath}
because $T_ g^*T_ g$ is a linear combination of the vectors $e_ n$.
This would give $g'\equiv 0$.
\par Since $\{e_ n\}$ is an orthonormal basis of $\Da$,
then  by Lemma \ref{le:2}
\begin{equation}
\begin{split}\label{eq:2}
 \int_\D|g'(z)|^p &(1-|z|^2)^{p-2}\,dA(z)
 \\
 &\le C \int_\D|g'(z)|^p\left(\sum_n|e_n(z)|^p|e'_n(z)|^{2-p}\right)\,dA_{\alpha}(z)
\\
& \le C
\sum_n\left(\int_\D|g'(z)|^2|e_n(z)|^2\,dA_{\alpha}(z)\right)^{p/2}
\\
&=C \sum_ n \langle T_ g^*T_ g e_ n,e_ n \rangle_{\Da}
^{p/2} =C\sum_ n \lambda_ n^{p/2}=C\|T_ g^*T_
g\|_{\mathcal{S}_{p/2}}^{p/2},
\end{split}
\end{equation}
which finishes the proof of (i). Furthermore, if  $T_g\in \mathcal{S}_ 1(\Da)$, then
\eqref{eq:2} says that
$$  \int_\D|g'(z)| (1-|z|^2)^{-1}\,dA(z)<\infty,$$
which implies that $g$ is constant. This completes the proof.
\end{proof}

\par The remaining part of the proof is more involved. It  will be splitted in two cases.
\par {\textbf{Sufficiency. Case $\mathbf{2<p\le 4}$}.}
Let $\{e_ n\}$ be any orthonormal set in $\mathcal{D}_{\alpha}$. Then
\begin{displaymath}
\sum_ n \|T_ g e_ n\|_{\Da}^p=\sum_ n \left (\int_{\D} |g'(z)|^2 \,|e_ n (z)|^2 dA_{\alpha}(z)\right )^{p/2}\le C ( I_ 1+I_ 2),
\end{displaymath}
with
\begin{displaymath}
I_ 1=\sum_ n |e_ n(0)|^p\left (\int_{\D} |g'(z)|^2\,dA_{\alpha}(z)\right )^{p/2}
\end{displaymath}
and
\begin{displaymath}
I_ 2=\sum_ n \left (\int_{\D} |g'(z)|^2 \,|e_ n^2 (z)-e_ n^2(0)| dA_{\alpha}(z)\right )^{p/2}.
\end{displaymath}
Since $g\in \Xpa\subset \Da$ by Lemma \ref{pr:xpaproperties}  and $|e_ n(0)|\le 1$, we clearly have
\begin{displaymath}
I_ 1\le \|g\|_{\Da}^p \sum_ n |e_ n(0)|^2 \le \|g\|_{\Da}^p\,\|K^{\alpha}_ 0\|^2_{\Da}\le C\,\|g\|_{\Xpa}^p.
\end{displaymath}
In order to deal with the term $I_ 2$, note first that $e_n^2\in\mathcal{D}_{1+2\alpha}$ because for any $f\in\mathcal{D}_{\alpha}$,
$$|f(z)|^2\le C \frac{\|f\|^2_{\mathcal{D}_{\alpha}}}{(1-|z|)^\alpha},\quad z\in\D.$$
So  from the reproducing formula for $\mathcal{D}_{1+2\alpha}$ we deduce
\[|e_ n^2(z)-e_ n^2(0)|\le C \int_{\D} \frac{|e_n(w)||e'_ n(w)|}{|1-\bar{w}z|^{2+2\alpha}} dA_{1+2\alpha}(w).\]
Therefore, if we use the notation
\[J_ n(g):=\int_{\D} |g'(z)|^2 \,|e_ n^2 (z)-e_ n^2(0)| dA_{\alpha}(z),\]
 Fubini's theorem and H\"{o}lder's inequality yields
\begin{displaymath}
\begin{split}
J_ n(g)^{p/2}
& \le C \left(\int_{\D} |e_n(w)||e'_ n(w)|
\left[\int_{\D} \frac{|g'(z)|^2\,dA_{\alpha}(z)}{|1-\bar{w}z|^{2+2\alpha}}\right]\, dA_{1+2\alpha}(w) \right)^{p/2}
\\
&
\le C \int_{\D} |e_n(w)|^{\frac{p}{2}}\,|e'_ n(w)|^{2-\frac{p}{2}}
\left(\int_{\D} \frac{|g'(z)|^2\,dA_{\alpha}(z)}{|1-\bar{w}z|^{2+2\alpha}}\right)^{p/2}\! dA_{(1+\alpha)\frac{p}{2}+\alpha}(w).
\end{split}
\end{displaymath}
Then, if $p=4$, it follows from \eqref{eqRK1} and the fact that  $\|K^{\alpha}_ w\|^2_{\Da}\asymp(1-|w|^2)^{-\alpha}$  that
\begin{equation*}\begin{split}
I_ 2  = \sum_ n J_ n(g)^{2}
 \le & C \int_{\D}
 \|K^{\alpha}_ w\|^2_{\Da} \left(\int_{\D} \frac{|g'(z)|^2\,dA_{\alpha}(z)}{|1-\bar{w}z|^{2+2\alpha}}\right)^{2}\! dA_{(1+\alpha)2+\alpha}(w)
 \\ & \le C \int_{\D}
\left(\int_{\D} \frac{|g'(z)|^2\,dA_{\alpha}(z)}{|1-\bar{w}z|^{2+2\alpha}}\right)^{2}\! dA_{(1+\alpha)2}(w)=||g||^{4}_{X^4_\a}.
\end{split}\end{equation*}
Now, if $2<p<4$, notice that H\"{o}lder's inequality with exponent $4/p>1$ and \eqref{eqRK1} yield
\begin{displaymath}
\begin{split}
\sum_ n |e_n(w)|^{\frac{p}{2}}\,|e'_ n(w)|^{2-\frac{p}{2}}&\le \left (\sum_ n |e_ n(w)|^2 \right)^{p/4}\!\left (\sum _ n |e'_ n(w)|^2 \right )^{1-\frac{p}{4}}
\\
& \le \|K_ w^{\alpha}\|_{\Da}^{p/2} \,\,\|J_ w ^{\alpha}\|_{\Da}^{\frac{(4-p)}{2}}.
\end{split}
\end{displaymath}
This together with the fact that for $\alpha>0$ we have $\|K^{\alpha}_ w\|^2_{\Da}\asymp(1-|w|^2)^{-\alpha}$ and $\|J_ w^{\alpha}\|^2_{\Da}=(1-|w|^2)^{-(2+\alpha)}$, gives
\begin{displaymath}
\begin{split}
I_ 2 & =\sum_ n J_ n(g)^{p/2}
\\ &
\le C \!\!\int_{\D} \sum_ n |e_n(w)|^{\frac{p}{2}}\,|e'_ n(w)|^{2-\frac{p}{2}}
\left(\int_{\D} \frac{|g'(z)|^2\,dA_{\alpha}(z)}{|1-\bar{w}z|^{2+2\alpha}}\right)^{p/2}\! dA_{(1+\alpha)\frac{p}{2}+\alpha}(w)
\\
&
\le C \!\!\int_{\D} \|K_ w^{\alpha}\|_{\Da}^{p/2} \,\,\|J_ w ^{\alpha}\|_{\Da}^{\frac{4-p}{2}}
\left(\int_{\D} \frac{|g'(z)|^2\,dA_{\alpha}(z)}{|1-\bar{w}z|^{2+2\alpha}}\right)^{p/2}\! dA_{(1+\alpha)\frac{p}{2}+\alpha}(w)
\\
&
\le C\int_{\D}
\left(\int_{\D} \frac{|g'(z)|^2\,dA_{\alpha}(z)}{|1-\bar{w}z|^{2+2\alpha}}\right)^{p/2}\! \, dA_{p-2+\alpha\frac{p}{2}}(w)
=C\,||g||^p_{\Xpa}.
\end{split}
\end{displaymath}
 Since $g\in \Xpa$  combining the estimates for $I_ 2$ and $I_ 1$ we obtain that
\begin{displaymath}
\sum_ n \|T_ g e_ n\|_{\Da}^p\le C<\infty.
\end{displaymath}
Thus, by \cite[Theorem $1.33$]{Zhu}, the operator $T_g$ belongs to $\mathcal{S}_ p(\Da)$.
\medskip
\par {\textbf{Sufficiency. Case $\mathbf{4<p<\infty}$ and  $\mathbf{p(1-\alpha)<4}$ }.}
 Proceeding as before we get
\begin{displaymath}
\begin{split}
J_ n(g)^{p/2}
& \le C \left(\int_{\D} |e_n(w)||e'_ n(w)|
\left[\int_{\D} \frac{|g'(z)|^2\,dA_{\alpha}(z)}{|1-\bar{w}z|^{2+2\alpha}}\right]\, dA_{1+2\alpha}(w) \right)^{p/2}
\\
&
\le C \left(\int_{\D} |e_n(w)|^2 \,S_{\alpha}g(w)^2
\, dA_{2+\alpha}(w) \right)^{p/4}
\end{split}
\end{displaymath}
where
$$S_{\alpha}g(w)=(1-|w|^2)^{\alpha}\int_{\D} \frac{|g'(z)|^2\,dA_{\alpha}(z)}{|1-\bar{w}z|^{2+2\alpha}}.$$
Since $\a>0$, $p>4$ and $p(1-\alpha)<4$, H\"{o}lder's inequality implies that
$\|S_{\alpha}g\|^2_{L^2(\D,dA_{2+\alpha})}\le C \|g\|_{\Xpa}^4$, and therefore we can assume that $e_ n(0)=0$.
 Note  that for $\beta\ge \alpha$ we have
 \begin{equation}\label{eq:j10}
|e_n(w)|=|e_ n(w)-e_ n(0)|\le C \int_{\D} \frac{|e'_ n(\zeta)|}{|1-\bar{\zeta}w|^{1+\beta}} dA_{\beta}(\zeta).
\end{equation}
This follows from the reproducing formula for $\mathcal{D}_{\beta}$ and the fact that $\Da\subset \mathcal{D}_{\beta}$ if $\alpha\le \beta$.
 Since $p\alpha>p-4$, we can take $\varepsilon>0$ so that
 \begin{equation}\label{epsilon}
 \alpha p-3\varepsilon p>p-4.
 \end{equation}  Now, choose
  \begin{equation}\label{beta}
 \b>\max\left\{1+\frac{p\varepsilon}{2}, \frac{\alpha p-3\varepsilon p}{p-4},\a-\ep+\frac{4}{p} \right\}.
 \end{equation}
An application of Cauchy-Schwarz inequality and Lemma \ref{Ict}, together with \eqref{beta}  gives
\begin{equation}\label{Eqen}
|e_ n(w)|^2\le C \left (\int_{\D} \frac{|e'_ n(\zeta)|^2}{|1-\bar{\zeta}w|^{\beta}}\,dA_{\beta+\varepsilon}(\zeta) \right ) (1-|w|^2)^{-\varepsilon}.
\end{equation}
The use of \eqref{Eqen}, Fubini's theorem and H\"{o}lder's inequality give
\begin{displaymath}
\begin{split}
J_ n(g)^{p/2}& \le C \left(\int_{\D} |e'_ n(\zeta)|^2 \,\left(\int_{\D}\frac{S_{\alpha}g(w)^2}{|1-\bar{\zeta}w|^{\beta}}
\, dA_{2+\alpha-\varepsilon}(w)\right) dA_{\beta+\varepsilon}(\zeta)\right)^{p/4}
\\
&\le C \int_{\D} |e'_ n(\zeta)|^2 \,\left(\int_{\D}\frac{S_{\alpha}g(w)^2}{|1-\bar{\zeta}w|^{\beta}}
\, dA_{2+\alpha-\varepsilon}(w)\right)^{p/4} dA_{(\beta +\varepsilon)p/4+\alpha(1-\frac{p}{4})}(\zeta).
\end{split}
\end{displaymath}
Thus, by \eqref{eqRK1}
\begin{equation}\label{Eq6A}
\sum_ n J_ n(g)^{p/2}  \le C \!\!\int_{\D} \left(\int_{\D}\frac{S_{\alpha}g(w)^2}{|1-\bar{\zeta}w|^{\beta}}
\, dA_{2+\alpha-\varepsilon}(w)\right)^{p/4} dA_{-2+(\beta+\varepsilon-\alpha) \frac{p}{4}}(\zeta).
\end{equation}
Let $$\gamma= \frac{-2p+8+\alpha p-3\varepsilon p}{p-4}=-2+\frac{\alpha p-3\varepsilon p}{p-4}.$$
 By \eqref{epsilon}, we have $\gamma>-1$. Now, using H\"{o}lder's inequality, \eqref{beta} and Lemma \ref{Ict} we obtain
\begin{displaymath}
\begin{split}
\int_{\D}&\frac{S_{\alpha}g(w)^2}{|1-\bar{\zeta}w|^{\beta}}
\, dA_{2+\alpha-\varepsilon}(w)
\\
&
\le \left (\int_{\D}\frac{S_{\alpha}g(w)^{p/2}}{|1-\bar{\zeta}w|^{\beta}}
\, dA_{p-2+\varepsilon \frac{p}{2}}(w)\right)^{\frac{4}{p}} \left (\int_{\D} \frac{dA_{\gamma}(w)}{|1-\bar{\zeta}w|^{\beta}}\right)^{\frac{p-4}{p}}
\\
&
\le \left (\int_{\D}\frac{S_{\alpha}g(w)^{p/2}}{|1-\bar{\zeta}w|^{\beta}}
\, dA_{p-2+\varepsilon \frac{p}{2}}(w)\right)^{\frac{4}{p}} \left ((1-|\zeta|^2)^{\gamma+2-\beta}\right)^{\frac{p-4}{p}}.
\end{split}
\end{displaymath}
Putting this into \eqref{Eq6A}, Fubini's theorem an Lemma \ref{Ict} yields
\begin{displaymath}
\begin{split}
\sum_ n J_ n(g)^{p/2}  &\le C \!\!\int_{\D} \left (\int_{\D}\frac{S_{\alpha}g(w)^{p/2}}{|1-\bar{\zeta}w|^{\beta}}
\, dA_{p-2+\varepsilon \frac{p}{2}}(w)\right) (1-|\zeta|^2)^{\beta-2-\varepsilon \frac{p}{2}}dA(\zeta)
\\
&=C \int_{\D} S_{\alpha}g(w)^{p/2}\left (\int_{\D} \frac{ (1-|\zeta|^2)^{\beta-2-\varepsilon \frac{p}{2}}}{|1-\bar{\zeta}w|^{\beta}} dA(\zeta)\right ) dA_{p-2+\varepsilon \frac{p}{2}}(w)
\\
&\le C\int_{\D} S_{\alpha}g(w)^{p/2} dA_{p-2}(w)=C ||g||^p_{\Xpa}.
\end{split}
\end{displaymath}

\subsection{The open case}
In relation with the open case $p(1-\alpha)\ge 4$, we provide a result which can be proved following the lines of the proof of Theorem \ref{th:nuevo} (case $p>4$), and therefore the proof will be omitted.
\begin{prop}\label{suficpmayor2}
Let $0<\alpha$, $p\ge 2$  and $g\in H(\D)$.
 If $g\in X^p_{\alpha-\ep}$ for some  $\ep\in (0,\alpha)$, then
$T_ g\in \mathcal{S}_ p(\Da)$.
\end{prop}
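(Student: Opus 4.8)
The plan is to mimic the sufficiency argument for the case $p>4$ in the proof of Theorem \ref{th:nuevo}, but with the weighted estimates adjusted so that the exponent of $g'$ ends up governed by $X^p_{\alpha-\ep}$ rather than $X^p_\alpha$. Let $\{e_n\}$ be an arbitrary orthonormal set in $\Da$; by \cite[Theorem $1.33$]{Zhu} it suffices to bound $\sum_n \|T_g e_n\|_{\Da}^p$ by $C\|g\|_{X^p_{\alpha-\ep}}^p$. As in the proof of Theorem \ref{th:nuevo} we split $\|T_g e_n\|_{\Da}^p=\big(\int_\D |g'|^2|e_n|^2\,dA_\alpha\big)^{p/2}$ into the contribution $I_1$ coming from $e_n(0)$ and the contribution $I_2=\sum_n J_n(g)^{p/2}$ with $J_n(g)=\int_\D |g'(z)|^2|e_n^2(z)-e_n^2(0)|\,dA_\alpha(z)$. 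The term $I_1$ is controlled by $\|g\|_{\Da}^p \|K_0^\alpha\|_{\Da}^2$, and since $X^p_{\alpha-\ep}\subset \Da$ by Proposition \ref{pr:xpaproperties}(i), this is at most $C\|g\|_{X^p_{\alpha-\ep}}^p$; moreover, after absorbing $I_1$ into the estimate one may reduce to the case $e_n(0)=0$.

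For $I_2$, I would follow the $p>4$ subsection verbatim up through inequality \eqref{Eq6A}, obtaining
\begin{displaymath}
\sum_n J_n(g)^{p/2}\le C\int_\D\left(\int_\D\frac{S_\alpha g(w)^2}{|1-\bar\zeta w|^\beta}\,dA_{2+\alpha-\varepsilon}(w)\right)^{p/4}dA_{-2+(\beta+\varepsilon-\alpha)\frac p4}(\zeta),
\end{displaymath}
with $S_\alpha g$ as defined there. The point where the argument must change is the final chain of H\"older/Fubini/Lemma \ref{Ict} manipulations: there one chooses the integrability exponents so as to land on $\int_\D S_\alpha g(w)^{p/2}\,dA_{p-2}(w)=\|g\|_{X^p_\alpha}^p$. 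Here, instead, I want the inner factor to be reorganized so that the weight on $|g'(z)|^2$ becomes $dA_{\alpha-\ep}$ and the resulting averaging kernel has exponent $2+2(\alpha-\ep)$, i.e. one trades the parameter $\alpha$ for $\alpha-\ep$ at the cost of slightly different auxiliary exponents $\beta,\varepsilon$. Concretely, one replaces $S_\alpha g$ by $S_{\alpha-\ep}g$ using the pointwise domination
$$(1-|w|^2)^\alpha\!\int_\D\frac{|g'(z)|^2\,dA_\alpha(z)}{|1-\bar wz|^{2+2\alpha}}\le C\,(1-|w|^2)^{\alpha-\ep}\!\int_\D\frac{|g'(z)|^2\,dA_{\alpha-\ep}(z)}{|1-\bar wz|^{2+2(\alpha-\ep)}},$$
which is immediate since $dA_\alpha(z)=(1-|z|^2)^\ep\,dA_{\alpha-\ep}(z)$ and $(1-|z|^2)^\ep(1-|w|^2)^\ep\le|1-\bar wz|^{2\ep}$; then $S_\alpha g\le C\,S_{\alpha-\ep}g$ pointwise, so $\int_\D S_\alpha g(w)^{p/2}\,dA_{p-2}(w)\le C\|g\|_{X^p_{\alpha-\ep}}^p$, and the whole chain closes exactly as in the theorem.

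The only genuine obstacle is bookkeeping: one must verify that, with $\alpha$ playing its role in $S_\alpha g$ but $\alpha-\ep$ the exponent controlling the target norm, the constraints needed to invoke Lemma \ref{Ict} at each step — namely $\gamma>-1$ for the intermediate exponent, and strict positivity of the exponents produced by the repeated integrations in $w$ and $\zeta$ — can still be met by choosing $\varepsilon$ (the auxiliary parameter, not to be confused with $\ep$) small and $\beta$ large, analogously to \eqref{epsilon} and \eqref{beta}. Since the hypothesis here is $g\in X^p_{\alpha-\ep}$ with $\ep>0$ fixed, one has extra room and in particular does not need the restriction $p(1-\alpha)<4$; the inequalities $p\alpha>p-4$ used in \eqref{epsilon} get the slack from $\ep>0$ instead. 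Once those elementary parameter inequalities are checked, the estimate $\sum_n\|T_g e_n\|_{\Da}^p\le C\|g\|_{X^p_{\alpha-\ep}}^p<\infty$ follows and Proposition \ref{suficpmayor2} is proved. As this is purely a rerun of the earlier computation, the authors omit the details.
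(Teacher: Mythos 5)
Your one substantive inequality, $S_\alpha g\le C\,S_{\alpha-\ep}g$, is correct, but it is nothing more than the inclusion $X^p_{\alpha-\ep}\subset X^p_\alpha$ of Proposition \ref{pr:xpaproperties}(iii), and inserting it pointwise into the $p>4$ chain changes only which function appears in the final integrand: none of the exponents in the argument move, because every measure and kernel power in that chain is dictated by $\alpha$ (the space $\Da$) and the auxiliary parameters of \eqref{epsilon}--\eqref{beta}, not by $\ep$. Consequently the constraints that make the chain work are exactly as before and still force $p(1-\alpha)<4$: the choice in \eqref{epsilon} requires $\alpha p>p-4$ (otherwise $\gamma=-2+\frac{\alpha p-3\varepsilon p}{p-4}\le -1$ and the H\"older/Lemma \ref{Ict} step following \eqref{Eq6A} collapses), and the reduction to $e_n(0)=0$ rests on $\|S_{\cdot}\,g\|_{L^2(\D,dA_{2+\alpha})}<\infty$, where $dA_{2+\alpha}$ is forced by the Cauchy--Schwarz step against $\int_\D|e_n'|^2\,dA_\alpha\le 1$ and where the H\"older deduction from the $X^p$ hypothesis needs the same restriction whether the function is $S_\alpha g$ or $S_{\alpha-\ep}g$. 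The hypothesis parameter $\ep$ simply does not occur in any of these inequalities, so the assertion that ``the inequalities $p\alpha>p-4$ used in \eqref{epsilon} get the slack from $\ep>0$'' is unsubstantiated and, for your arrangement of the proof, false. A quick reductio makes the point: since $S_\alpha g\le CS_{\alpha-\ep}g$ is equivalent to $\|g\|_{X^p_\alpha}\le C\|g\|_{X^p_{\alpha-\ep}}$, your argument, if valid, would show that $g\in X^p_\alpha$ alone implies $T_g\in\mathcal{S}_p(\Da)$ for every $p\ge 2$, i.e.\ it would settle the case $p(1-\alpha)\ge 4$ which the paper explicitly leaves open, and it would make Proposition \ref{suficpmayor2} (stated precisely because $X^p_{\alpha-\ep}\subsetneq X^p_\alpha$ in that range) pointless.

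What ``following the lines of the case $p>4$'' actually demands is that the factor $(1-|z|^2)^{\ep}$ relating $dA_\alpha$ to $dA_{\alpha-\ep}$ be carried \emph{inside} the estimates, so that the measures and kernel exponents in the analogues of \eqref{Eqen} and \eqref{Eq6A}, the intermediate exponent playing the role of $\gamma$, and the final applications of Lemma \ref{Ict} are themselves shifted by multiples of $\ep$; it is this shift in the bookkeeping, not the pointwise comparison of $S_\alpha g$ with $S_{\alpha-\ep}g$ (which throws the gain away at the first step), that is meant to replace the hypothesis $p(1-\alpha)<4$. You would also need an independent justification of the reduction to $e_n(0)=0$ in this range --- for instance, $g\in X^p_{\alpha-\ep}$ gives, as in \eqref{sup}, the pointwise bound $S_{\alpha-\ep}g(w)\le C(1-|w|^2)^{-2}$, and combining it with Fubini and Proposition \ref{pr:xpaproperties}(i) one can check $\int_\D S_{\alpha-\ep}g(w)^2\,dA_{2+\alpha}(w)<\infty$ --- since the H\"older argument the paper uses for this step is unavailable when $p(1-\alpha)\ge 4$. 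As written, your proposal reproduces the portion of the proof that needs no modification and hand-waves exactly at the portion the proposition was introduced to handle, so it does not prove the statement.
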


\par Obviously, $X^p_{\alpha-\ep}\subsetneq  \Xpa$ if $(1-\a)p\ge 2$ (see Lemma \ref{le:normagkxpa} below), so  Proposition \ref{suficpmayor2} gives a sufficient but not necessary condition for $T_g\in \mathcal{S}_p(\Da)$, $(1-\a)p\ge 2$. However, if $\alpha>0$ and $1<p<\infty$, those techniques which will be developed in the proof of Lemma \ref{le:ga}, together with Lemma \ref{LI2}, imply that for any $\beta>0$,
$$\|T_{g_a}\|_{\mathcal{S}_p(\Da)}\asymp \|g_a\|_{B_ p}\asymp||g_a||_{X^p_\beta},$$
where $g_ a(z)=(1-\bar{a}z)^{-\gamma}$, $\gamma>0$. In particular,  the previous result gives the right growth  for this family of functions.

\section{Schatten classes of $T_g$ on the classical
Dirichlet space}\label{sclassical}
\subsection{Case $\mathbf{p\le 2}$}
\vspace{1em}\par
 \begin{proof}[Proof of Theorem \ref{th:2}]
 \par Since $\mathcal{S}_ p(\mathcal{D})\subset \mathcal{S}_ 1(\mathcal{D})$ for $0<p\le 1$, the result follows from part (ii) of Proposition \ref{nece}.
 \end{proof}
\par\begin{proof}[Proof of Theorem \ref{th:3}]
Part (a) follows from part (i) of Proposition \ref{nece}, and part (c) is deduced in Proposition \ref{Pr4}. In order to prove part (b), assume that $1<p<2$. Then, for all orthonormal sets $\{e_n\}$ of
$\mathcal{D}$, we have
\begin{displaymath}
\begin{split}
\sum_n\left|\langle T_ge_n, e_n\rangle_{\mathcal{D}}\right|^p & \le
\sum_n \left(\int_\D|g'(z)|\,|e_n(z)|\,|e'_n(z)|\,dA(z) \right)^p
\\
& \le \sum_n \left(\int_\D|g'(z)|^p\,|e_n(z)|^p\,|e'_n(z)|^{2-p}\,dA(z)\right)\left(\int_\D|e'_n(z)|^2\,dA(z)\right)^{p/p'}
\\
& \le  \int_\D|g'(z)|^p\left(\sum_n|e_n(z)|^p|e'_n(z)|^{2-p}\right)\,dA(z)
\\
& \le  \int_\D|g'(z)|^p\left(\sum_n|e_n(z)|^2\right)^{p/2}\left(\sum_n|e'_n(z)|^{2}\right)^{1-p/2}\,dA(z)
\\
& \le C \int_\D|g'(z)|^p \left(\log\frac{e}{1-|z|}\right)^{p/2}(1-|z|^2)^{p-2}\,dA(z).
\end{split}
\end{displaymath}
Thus, by \cite[Theorem 1.27]{Zhu}, we deduce that $T_ g\in \mathcal{S}_ p(\mathcal{D})$ with $\|T_ g\|_{\mathcal{S}_ p}\le C \|g\|_{B_ p \log^{p/2}}$.
\end{proof}

\subsection{Testing functions for Schatten classes}
\par Our next goal consists of proving that Theorem \ref{th:3} gives the correct behavior of $\|T_ g\|_{\mathcal{S}_ p}$, $1<p<2$, at least for some families of functions. For the beginning, we deal with monomials.
\begin{lem}\label{le:normagkxpa}
Asumme that $0\le \alpha<1$ and $1<p<\infty$. Let $g_j(z)=z^j$, $j=1,2,3\dots$. Then
 \begin{equation}\label{eq:normagkxpa}
\|T_{g_j}\|_{\mathcal{S}_ p(\Da)}\asymp ||g_j||_{X^p_\alpha}
\asymp
\begin{cases}
j^{\frac{1}{p}} & \textrm{if} \quad (1-\alpha)p<2,\\
\left(j\log(j+1)\right)^{\frac{1}{p}} & \textrm{if} \quad (1-\alpha)p=2,\\
j{\frac{1-\alpha}{2}} & \textrm{if} \quad (1-\alpha)p\ge 2,
\end{cases}
\end{equation}
\begin{equation}\label{2}
  \|g_j\|_{B_{p,\log^{p/2}}}\asymp j^{1/p}\left(\log (j+1)\right)^{1/2},
\end{equation}
and
\begin{equation}\label{j13}
\|g_ j\|_{B_p} \asymp j^{1/p}.
\end{equation}
\end{lem}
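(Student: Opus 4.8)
The plan is to compute each of the three quantities $\|T_{g_j}\|_{\mathcal{S}_p(\Da)}$, $\|g_j\|_{X^p_\alpha}$, $\|g_j\|_{B_{p,\log^{p/2}}}$, $\|g_j\|_{B_p}$ for the monomial $g_j(z)=z^j$ by exploiting the rotational symmetry, which diagonalizes everything in the monomial basis. First I would treat the Schatten norm of $T_{g_j}$. Since $g_j'(z) = j z^{j-1}$, the operator $T_{g_j}$ sends $z^n$ (up to normalizing constants in $\Da$) to a multiple of $z^{n+j}$; concretely, $(T_{g_j}f)(z) = j\int_0^z \zeta^{j-1} f(\zeta)\,d\zeta$, so on the orthogonal basis $\{z^n\}_{n\ge 0}$ of $\Da$ the operator acts as a weighted shift $z^n \mapsto \frac{jn}{n+j}\,z^{n+j}$ (computing the antiderivative term by term). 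A weighted shift is built from a diagonal operator composed with an isometric shift, so its singular numbers are exactly the moduli of the weights, rescaled by the ratio of the $\Da$-norms $\|z^{n+j}\|_{\Da}/\|z^n\|_{\Da}$. Using $\|z^n\|_{\Da}^2 \asymp n^{1-\alpha}$ for $n\ge 1$ (which follows from $\|z^n\|_{\Da}^2 = n^2 \int_{\D}|z|^{2n-2}\,dA_\alpha \asymp n^2 \cdot n^{-1-\alpha} = n^{1-\alpha}$), the $n$-th singular number is comparable to $\frac{jn}{n+j}\cdot\big(\frac{n+j}{n}\big)^{(1-\alpha)/2}$. Then $\|T_{g_j}\|_{\mathcal{S}_p}^p \asymp \sum_{n\ge 1}\big(\frac{jn}{n+j}\big)^p\big(\frac{n+j}{n}\big)^{p(1-\alpha)/2}$, and splitting the sum at $n\asymp j$ — where for $n\ll j$ the summand is $\asymp n^p \cdot (j/n)^{p(1-\alpha)/2} = j^{p(1-\alpha)/2} n^{p\alpha/2}$ (wait, more carefully $n^{p - p(1-\alpha)/2} = n^{p(1+\alpha)/2}$ times $j^{p(1-\alpha)/2}/$ ... ), and for $n\gg j$ the summand is $\asymp j^p n^{-p} \cdot 1$ — gives a convergent tail and a head contributing the three regimes in \eqref{eq:normagkxpa} according to whether $p(1-\alpha)/2$ vs.\ the relevant exponents cross $1$; the borderline $p(1-\alpha)=2$ produces the logarithmic factor. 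I expect this bookkeeping of the split sum to be the main (though routine) obstacle.

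Next I would compute $\|g_j\|_{X^p_\alpha}$ directly from \eqref{eq:xpa}. The inner integral is $(1-|w|^2)^\alpha \int_{\D} \frac{j^2|z|^{2j-2}\,dA_\alpha(z)}{|1-\bar w z|^{2+2\alpha}}$, which by expanding $|1-\bar w z|^{-(2+2\alpha)}$ in a power series and integrating in $\theta$ reduces to a single power series in $|w|^2$ with explicit Gamma-function coefficients; alternatively one estimates it using Lemma \ref{Ict}-type bounds after observing $|z|^{2j-2}$ concentrates near $|z| = 1 - 1/j$. The cleanest route is: $\int_{\D}\frac{|z|^{2j-2}\,dA_\alpha(z)}{|1-\bar w z|^{2+2\alpha}} \asymp$ a quantity comparable to $(1-|w|^2)^{-2\alpha}$ for $|w| \lesssim 1-1/j$ and with a transition for $|w|$ close to $1$; carrying this through the outer $(1-|w|^2)^{p-2}$ integral again produces a split at $|w|\asymp 1-1/j$ and yields the same three regimes, confirming $\|g_j\|_{X^p_\alpha}\asymp \|T_{g_j}\|_{\mathcal{S}_p}$. (In the regime $p(1-\alpha)<2$ one may instead simply invoke the identity $X^p_\alpha = B_p$ from Proposition \ref{pr:xpaproperties}(iv) together with \eqref{j13}.)

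Finally, \eqref{2} and \eqref{j13} are elementary: with $g_j'(z) = j z^{j-1}$ and $|z|^{p(j-1)}$ concentrating near $|z| = 1 - 1/(pj)$, one has
\[
\|g_j\|_{B_p}^p = j^p\int_{\D}|z|^{p(j-1)}(1-|z|^2)^{p-2}\,dA(z) \asymp j^p \int_0^1 r^{pj}(1-r)^{p-2}\,dr \asymp j^p \cdot j^{-(p-1)} = j,
\]
using $\int_0^1 r^{pj}(1-r)^{p-2}\,dr \asymp \Gamma(p-1)\,j^{-(p-1)}$ (a standard Beta-function asymptotic, valid since $p>1$), which gives \eqref{j13}. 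Inserting the extra factor $\big(\log\frac{e}{1-r}\big)^{p/2}\asymp (\log(j+1))^{p/2}$ on the bulk of the integral where $1-r\asymp 1/j$ gives $\|g_j\|_{B_{p,\log^{p/2}}}^p \asymp j\,(\log(j+1))^{p/2}$, i.e.\ \eqref{2}. Throughout, the only care needed is the standard asymptotic $\int_0^1 r^{N}(1-r)^{a-1}\,dr \asymp \Gamma(a) N^{-a}$ as $N\to\infty$ and the repeated splitting of sums/integrals at the natural scale $1-|w|\asymp 1/j$; these I would state once and reuse.
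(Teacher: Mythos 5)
Your overall strategy --- diagonalize $T_{g_j}$ on the monomial basis to read off its singular numbers, and estimate the remaining norms by radial integrals split at the scale $1-r\asymp 1/j$ --- is essentially the paper's own (the paper evaluates $\|g_j\|_{X^p_\alpha}$ by expanding into a power series with positive coefficients and invoking Mateljevi\'c--Pavlovi\'c, arriving at the same sum $j^p\sum_{n\ge j}\bigl(n^{1+\alpha}(n-j+1)^{1-\alpha}\bigr)^{-p/2}$ as for the Schatten norm). However, two of your key displayed formulas are wrong as written. First, $T_{g_j}z^n=\frac{j}{n+j}\,z^{n+j}$, so with $\|z^n\|_{\Da}^2\asymp n^{1-\alpha}$ the $n$-th singular number is $\asymp\frac{j}{n+j}\bigl(\frac{n+j}{n}\bigr)^{(1-\alpha)/2}$, not $\frac{jn}{n+j}\bigl(\frac{n+j}{n}\bigr)^{(1-\alpha)/2}$; with your extra factor $n$ the sum you display for $\|T_{g_j}\|_{\mathcal{S}_p}^p$ has terms tending to $j^p$ and diverges. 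Your head/tail analysis is then internally inconsistent (the tail estimate $j^pn^{-p}$ uses the correct weight, the head estimate $n^p(j/n)^{p(1-\alpha)/2}$ the wrong one), and the hedge ``wait, more carefully'' leaves the decisive bookkeeping undone: with the correct weights the head is $j^{p(1-\alpha)/2}\sum_{1\le n\le j}n^{-p(1-\alpha)/2}$ and the tail is $\asymp j$ (it is not convergent), and comparing $p(1-\alpha)/2$ with $1$ is exactly what produces the three regimes.

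Second, your ``cleanest route'' for $\|g_j\|_{X^p_\alpha}$ rests on the claim that the inner integral is $\asymp(1-|w|^2)^{-2\alpha}$ for $|w|\lesssim 1-1/j$; this is false, and being essentially $j$-independent it could not reproduce the three regimes. Since $|z|^{2j-2}$ concentrates where $1-|z|\asymp 1/j$, the correct estimates are $\int_\D\frac{|z|^{2j-2}\,dA_\alpha(z)}{|1-\bar wz|^{2+2\alpha}}\asymp j^{-1-\alpha}(1-|w|)^{-1-2\alpha}$ when $1-|w|\gg 1/j$, and $\asymp j^{\alpha}$ when $1-|w|\lesssim 1/j$; with these, the split at $1-|w|\asymp 1/j$ does give \eqref{eq:normagkxpa}, so your plan is salvageable but the estimate you wrote would derail it (the paper instead reduces the outer integral to dyadic blocks of a power series with positive coefficients via the Mateljevi\'c--Pavlovi\'c theorem). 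The computations of \eqref{j13} and of the lower bound in \eqref{2} are fine; for the upper bound in \eqref{2} you still need to check that the region $1-r<1/j$ contributes $\lesssim j^{-(p-1)}\bigl(\log(j+1)\bigr)^{p/2}$ (the paper does this via an admissible-weight estimate), but that part is routine.
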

\begin{proof}
We shall use the inner product in $\Da$ given by
 \[\langle f,g\rangle=\sum_{k=0}^\infty(k+1)^{1-\alpha}\,a_k\,\overline{b_ k},\]
for $f(z)=\sum_{n=0}^\infty a_nz^n$,\, and $g(z)=\sum_{n=0}^\infty b_nz^n$.
We note that
\[T_{g_j}(f)(z)=j\,\sum_{k=0}^\infty \frac{a_k}{k+j}\,z^{k+j}=j\,\sum_{n=j}^\infty \frac{a_{n-j}}{n}\,z^{n}.\]
\par Now, if $\sigma_n(z)=\frac{z^n}{(n+1)^{\frac{1-\alpha}{2}}}$,\, $n\in\N$, we have that $\{\sigma_n\}_{n=0}^\infty$ is an orthonomal basis of $\Da$, and furthermore
\begin{displaymath}
\sum_{n=j}^\infty \frac{a_{n-j}}{n}\,z^{n}=\sum_{n=j}^\infty \frac{a_{n-j}(n+1)^{\frac{1-\alpha}{2}}}{n}\,\sigma_n=
\sum_{n=j}^\infty \frac{(n+1)^{\frac{1-\alpha}{2}}}{n(n-j+1)^{\frac{1-\alpha}{2}}}\langle
f,\sigma_{n-j}\rangle\sigma_n.
\end{displaymath}
 That is, the singular values of the integration operator\, $T_{g_j}$ are
$\left\{\frac{j(n+1)^{\frac{1-\alpha}{2}}}{n(n-j+1)^{\frac{1-\alpha}{2}}}\right\}_{n=j}^\infty$.
Consequently,
\begin{equation}\begin{split}\label{eq:j1}
\|T_{g_j}\|^p_{\mathcal{S}_p}\asymp j^p\sum_{n=j}^\infty
\frac{1}{\big(n^{1+\alpha}(n-j+1)^{1-\alpha}\big)^{p/2}}.
\end{split}
\end{equation}
\par On the other hand,
\begin{equation*}\begin{split}
 \|g_ j\|^p_{\Xpa}&= j^p\int_{\D}\left(\int_\D \frac{|\z|^{2(j-1)}}{|1-\overline{z}\z|^{2+2\alpha}}\,dA_\alpha(\zeta)\right)^{\frac{p}{2}} \,(1-|z|^2)^{p-2+\frac{p\alpha}{2}}\,dA(z).
\\
 & \asymp j^p\int_{\D}\left( \int_0^1 \frac{r^{2j-1}(1-r)^{\alpha}}{(1-r|z|)^{1+2\alpha}}\,dr\right)^{\frac{p}{2}} \,(1-|z|^2)^{p-2+\frac{p\alpha}{2}}\,dA(z)
\\
 & \asymp j^p \int_{\D}\left( \sum_{m=0}^\infty \frac{(m+1)^{2\alpha}|z|^{m}}{(2j+m)^{1+\alpha}} \right)^{\frac{p}{2}} \,(1-|z|^2)^{p-2+\frac{p\alpha}{2}}\,dA(z)
\\
& \asymp j^p \int_0^1 \left( \sum_{m=0}^\infty \frac{(m+1)^{2\alpha}s^{m}}{(2j+m)^{1+\alpha}} \right)^{\frac{p}{2}} \,(1-s^2)^{p-2+\frac{p\alpha}{2}}\,ds.
\end{split}
\end{equation*}
At this point, we use \cite[Theorem $1$]{MP} to obtain
\begin{equation*}
\begin{split}
\|g_ j\|^p_{\Xpa}& \asymp j^p \sum_{n=0}^\infty \frac{1}{2^{n(p-1+\frac{p\alpha}{2})}}\left(\sum_{m\in I(n)}\frac{(m+1)^{2\alpha}}{(2j+m)^{1+\alpha}}\right)^{p/2}
\\
& \asymp j^p \sum_{n=0}^\infty \frac{2^{n}}{\left[2^{n(1-\alpha)}(2^n+2j)^{1+\alpha}\right]^{p/2}}
\\
 & \asymp j^p \sum_{n=0}^\infty \left(\sum_{m\in I(n)}\frac{1}{\left[(m+1)^{(1-\alpha)}(m+j)^{(1+\alpha)}\right]^{p/2}}\right)
\\
& \asymp j^p \sum_{n=0}^\infty\frac{1}{\left[(n+1)^{(1-\alpha)}(n+j)^{(1+\alpha)}\right]^{p/2}}
\\
& \asymp j^p \sum_{n=j}^\infty\frac{1}{\left[n^{(1+\alpha)}(n-j+1)^{(1-\alpha)}\right]^{p/2}},
\end{split}\end{equation*}
which together with \eqref{eq:j1} gives the first equivalence in \eqref{eq:normagkxpa}. The second   equivalence in \eqref{eq:normagkxpa} follows from an straightforward calculation according to those values of $p$ and $\alpha$.
\par Now we prove \eqref{2},
\begin{equation*}\begin{split}\label{1}
\|g_j\|^p_{B_{p,\log^{p/2}}}&=j^p\int_0^1 r^{(j-1)p+1}(1-r)^{p-2}\left(\log\frac{e}{1-r}\right)^{p/2}\,dr
\\
& \ge j^p\int_{1-\frac{1}{j+1}}^1 r^{(j-1)p+1}(1-r)^{p-2}\left(\log\frac{e}{1-r}\right)^{p/2}\,dr
\\
 & \asymp j^p \int_{1-\frac{1}{j+1}}^1 (1-r)^{p-2}\left(\log\frac{e}{1-r}\right)^{p/2}\,dr
\\
& \asymp  j\left(\log (j+1)\right)^{p/2},
\end{split}\end{equation*}
where in the last step we have used that $\om(r)=(1-r)^{p-2}\left(\log\frac{e}{1-r}\right)^{p/2}$ is an admissible weight with distortion function equivalent to
$(1-r)$ (see \cite[p. 11]{PavP}).
Now, bearing in mind the properties of the Beta function,
\begin{equation*}\begin{split}
&j^p\int_0^{1-\frac{1}{j+1}} r^{(j-1)p+1}(1-r)^{p-2}\left(\log\frac{e}{1-r}\right)^{p/2}\,dr
\\
& \le C j^p \left(\log (j+1)\right)^{p/2}\int_0^{1-\frac{1}{j+1}} r^{(j-1)p+1}(1-r)^{p-2}\,dr
\\
& \le C j^p \left(\log (j+1)\right)^{p/2}\int_0^{1} r^{(j-1)p+1}(1-r)^{p-2}\,dr
\\
& \asymp  j\left(\log (j+1)\right)^{p/2},
\end{split}\end{equation*}
so we get \eqref{2}.
 The equivalence \eqref{j13} can be proved analogously. This finishes the proof.

\end{proof}
\medskip
\par Next, for each $a\in \D$ and $\gamma>0$, consider the functions $g_ a(z)=(1-\bar{a}z)^{-\gamma}$.
\begin{lem}\label{le:ga}
Assume that $p>1$ and $\gamma>0$. Then
\begin{equation}\label{eq:ga3}
\|g_ a\|_{B_{p}} \asymp (1-|a|^2)^{-\gamma},
\end{equation}
\begin{equation}\label{eq:ga1}
\|g_a\|_{X^p_{0}}\asymp (1-|a|)^{-\gamma}\left(\log\frac{e}{1-|a|}\right)^{1/p},\quad |a|\to 1^-,
\end{equation}
and
\begin{equation}\label{eq:ga2}
\|T_{g_ a}\|_{\mathcal{S}_p(\mathcal{D})}\asymp \|g_ a\|_{B_{p,\log^{p/2}}} \asymp (1-|a|^2)^{-\gamma}\,\left(\log \frac{e}{1-|a|^2}\right)^{1/2}.
\end{equation}
\end{lem}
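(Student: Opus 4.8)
The plan is to substitute $g_a'(z)=\gamma\bar a(1-\bar a z)^{-\gamma-1}$ everywhere and reduce each of the three quantities to Forelli--Rudin type integrals governed by Lemma \ref{Ict} and Lemma \ref{LI2}, together with a logarithmically weighted analogue of Lemma \ref{Ict}; since every quantity in sight is $\asymp 1$ on compact subsets of $\D$, only the behaviour as $|a|\to1^-$ needs to be checked. For \eqref{eq:ga3} one writes $\|g_a\|_{B_p}^p=1+\gamma^p|a|^p\int_\D(1-|z|^2)^{p-2}|1-\bar a z|^{-p(\gamma+1)}\,dA(z)$ and applies Lemma \ref{Ict} with $t=p-2>-1$ and $c=p\gamma>0$, so that the integral is $\asymp(1-|a|^2)^{-p\gamma}$ and the additive constant is harmless.

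For \eqref{eq:ga1}, since $dA_0=dA$ the inner integral in the definition of $\|\cdot\|_{X^p_{0}}$ equals $\Phi(w):=\gamma^2|a|^2\int_\D|1-\bar a z|^{-2\gamma-2}|1-\bar w z|^{-2}\,dA(z)$, and the exponent $2$ over $|1-\bar w z|$ is exactly the borderline value $s+2$ excluded in Lemma \ref{LI2}. To get around this I would first peel off a factor $|1-\bar w z|^{-\varepsilon}\le C(1-|w|^2)^{-\varepsilon}$ (valid since $|1-\bar w z|\ge1-|w|\ge\tfrac12(1-|w|^2)$) for a small $\varepsilon>0$, then apply Lemma \ref{LI2} with $s=0$, $r=2\gamma+2$, $t=2-\varepsilon$ (so that $t<s+2<r$), obtaining $\Phi(w)\le C(1-|a|^2)^{-2\gamma}(1-|w|^2)^{-\varepsilon}|1-\bar a w|^{-(2-\varepsilon)}$; inserting this into the outer integral in $\|g_a\|_{X^p_{0}}^p$ and invoking Lemma \ref{Ict} in its $c=0$ (logarithmic) case gives the upper bound $C(1-|a|^2)^{-p\gamma}\log\tfrac{e}{1-|a|^2}$. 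For the matching lower bound I would restrict the $z$-integral in $\Phi(w)$ to a fixed hyperbolic disk $D(a,1)$, on which $|1-\bar a z|\asymp1-|a|^2$, $|1-\bar w z|\asymp|1-\bar a w|$ and $|D(a,1)|\asymp(1-|a|^2)^2$, which yields $\Phi(w)\ge c(1-|a|^2)^{-2\gamma}|1-\bar a w|^{-2}$, and then apply Lemma \ref{Ict} with $c=0$ once more.

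For \eqref{eq:ga2}, I would first record $\|g_a\|_{B_{p,\log^{p/2}}}^p=1+\gamma^p|a|^p\int_\D(1-|z|^2)^{p-2}\big(\log\tfrac{e}{1-|z|^2}\big)^{p/2}|1-\bar a z|^{-p(\gamma+1)}\,dA(z)$ and, from \eqref{HS}, $\|T_{g_a}\|_{\mathcal{S}_2}^2=\int_\D|g_a'(z)|^2\log\tfrac{e}{1-|z|^2}\,dA(z)$; a logarithmically weighted version of Lemma \ref{Ict} (proved by the usual dyadic decomposition $2^{-n-1}\le1-|z|^2<2^{-n}$, on which the logarithm is $\asymp n+1$) evaluates these to $\asymp(1-|a|^2)^{-p\gamma}\big(\log\tfrac{e}{1-|a|^2}\big)^{p/2}$ and $\asymp(1-|a|^2)^{-2\gamma}\log\tfrac{e}{1-|a|^2}$ respectively, which already gives the second $\asymp$ in \eqref{eq:ga2}. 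It then remains to sandwich $\|T_{g_a}\|_{\mathcal{S}_p}$ between these. For the lower bound, $\|T_{g_a}\|_{\mathcal{S}_p}\ge\|T_{g_a}\|\ge\|T_{g_a}k_a^{\mathcal D}\|_{\mathcal D}$ with $k_a^{\mathcal D}=K_a^{\mathcal D}/\|K_a^{\mathcal D}\|_{\mathcal D}$; since $\re K_a^{\mathcal D}(z)=1+\log\tfrac{1}{|1-\bar a z|}\ge c\log\tfrac{1}{1-|a|^2}$ on $D(a,1)$ while $\|K_a^{\mathcal D}\|_{\mathcal D}^2=K_a^{\mathcal D}(a)\asymp\log\tfrac{e}{1-|a|^2}$, restricting $\int_\D|k_a^{\mathcal D}(z)|^2|g_a'(z)|^2\,dA(z)$ to $D(a,1)$ gives $\|T_{g_a}k_a^{\mathcal D}\|_{\mathcal D}^2\ge c(1-|a|^2)^{-2\gamma}\log\tfrac{e}{1-|a|^2}$ (for $1<p\le2$ one may alternatively just use $\|T_{g_a}\|_{\mathcal{S}_p}\ge\|T_{g_a}\|_{\mathcal{S}_2}$). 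For the upper bound, if $1<p\le2$ then Theorem \ref{th:3}(b) gives $\|T_{g_a}\|_{\mathcal{S}_p}\le C\|g_a\|_{B_{p,\log^{p/2}}}$, while if $p\ge2$ then $\|T_{g_a}\|_{\mathcal{S}_p}\le\|T_{g_a}\|_{\mathcal{S}_2}$; in either case both ends of the sandwich are $\asymp(1-|a|^2)^{-\gamma}\big(\log\tfrac{e}{1-|a|^2}\big)^{1/2}$.

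The only ingredients that are not bare applications of Lemma \ref{Ict}, Lemma \ref{LI2} or a hyperbolic-disk localization are the logarithmically weighted Forelli--Rudin estimate and the $\varepsilon$-perturbation circumventing the borderline exponent $t=s+2$ in Lemma \ref{LI2}; I expect the (routine) verification of the weighted estimate to be the main obstacle, everything else being bookkeeping.
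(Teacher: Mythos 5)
Your proof is correct, and for \eqref{eq:ga3} and \eqref{eq:ga1} it is essentially the paper's own argument: Lemma \ref{Ict} for the $B_p$ norm, and for the $X^p_0$ norm the same $\varepsilon$-peeling $|1-\bar w z|^{-\varepsilon}\le C(1-|w|^2)^{-\varepsilon}$ followed by Lemma \ref{LI2} and the logarithmic case of Lemma \ref{Ict}, with the matching lower bound by localizing to a disk around $a$. The differences appear in \eqref{eq:ga2}. You evaluate the log-weighted integrals ($\|g_a\|_{B_{p,\log^{p/2}}}$ and $\|T_{g_a}\|_{\mathcal{S}_2}$) by a dyadic annular decomposition, whereas the paper splits the radial integral at $|a|$ and quotes the admissible-weight machinery of \cite{PavP}; both are routine and give \eqref{gabplog3}. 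More substantively, for the lower bound on $\|T_{g_a}\|_{\mathcal{S}_p}$ when $p\ge 2$ the paper invokes Proposition \ref{pr:clasicop>2}(i) (a forward reference, proved later via the adjoint multiplication operator and Proposition \ref{th:general}), while you use the elementary chain $\|T_{g_a}\|_{\mathcal{S}_p}\ge\|T_{g_a}\|\ge\|T_{g_a}k_a^{\mathcal{D}}\|_{\mathcal{D}}$ together with $\re K_a^{\mathcal{D}}\gtrsim\log\frac{1}{1-|a|^2}$ on a hyperbolic disk about $a$; this is self-contained, avoids the circular-looking dependence on the $p>2$ necessity result, and the computation checks out (area $\asymp(1-|a|^2)^2$, $|g_a'|^2\asymp(1-|a|^2)^{-2\gamma-2}$, $\|K_a^{\mathcal{D}}\|^2\asymp\log\frac{e}{1-|a|^2}$), at the cost of only giving the asymptotic statement as $|a|\to1^-$ — which is all the lemma really asserts, since at $a=0$ the operator $T_{g_a}$ is zero anyway (your remark that everything is $\asymp 1$ on compact subsets should be read with $|a|$ bounded away from $0$, exactly as in the paper's own proof, which assumes $|a|\ge 1/2$). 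The upper bounds (Theorem \ref{th:3}(b) for $1<p\le 2$, monotonicity $\|\cdot\|_{\mathcal{S}_p}\le\|\cdot\|_{\mathcal{S}_2}$ for $p\ge 2$) coincide with the paper's.
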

\begin{proof}
A simple use of Lemma \ref{Ict} implies \eqref{eq:ga3}.
 Since $|1-\overline{w}a|\asymp |1-\overline{w}z|$ for any $z\in D(a)=\big\{z: |z-a|<\frac{1-|a|}{2}\big\}$, it follows that
\begin{equation*}\begin{split}
\int_\D\frac{dA(z)}{|1-\overline{w}z|^2|1-\overline{a}z|^{2\gamma+2}}& \ge
\int_{D(a)}\frac{dA(z)}{|1-\overline{w}z|^2|1-\overline{a}z|^{2\gamma+2}}
\\
& \ge \frac{C}{(1-|a|)^{2\gamma+2}}\int_{D_a}\frac{dA(z)}{|1-\overline{w}z|^2}
\\
 & \ge \frac{C}{(1-|a|)^{2\gamma}|1-\overline{w}a|^2}.
\end{split}\end{equation*}
Therefore, joining this and Lemma \ref{Ict},
\begin{displaymath}
\begin{split}
\|g_a\|^p_{X^p_{0}}= &|a\gamma|^p\int_\D\left(\int_\D\frac{dA(z)}{|1-\overline{w}z|^2|1-\overline{a}z|^{2\gamma+2}}\right)^{p/2} (1-|w|^2)^{p-2}\,dA(w)
\\
& \ge C  \frac{C}{(1-|a|)^{p\gamma}}\int_\D\frac{ (1-|w|^2)^{p-2}\,dA(w)}{|1-\overline{w}a|^p}
\\
& \asymp (1-|a|)^{-p\gamma}\left(\log\frac{e}{1-|a|}\right).
\end{split}
\end{displaymath}
\par On the other hand, taking $0<\varepsilon<\min(1,2(p-1)/p)$, and bearing in mind Lemma \ref{LI2},
\begin{displaymath}
\begin{split}
 \int_\D\frac{dA(z)}{|1-\overline{w}z|^2|1-\overline{a}z|^{2\gamma+2}} &\le (1-|w|)^{-\varepsilon}\int_\D\frac{dA(z)}{|1-\overline{w}z|^{2-\varepsilon}|1-\overline{a}z|^{2\gamma+2}}
 \\
& \le C \frac{(1-|w|^2)^{-\varepsilon}}{(1-|a|)^{2\gamma}|1-\overline{w}a|^{2-\varepsilon}}.
\end{split}
\end{displaymath}
So, an application of Lemma \ref{Ict} gives
\begin{equation*}
\begin{split}
\|g_a\|^p_{X^p_{0}} & \le \frac{C}{(1-|a|)^{p\gamma}}\int_\D\frac{ (1-|w|^2)^{p-2-\varepsilon p/2}\,dA(w)}{|1-\overline{w}a|^{p-\varepsilon \frac{p}{2}}}
\\
& \asymp (1-|a|)^{-p\gamma}\left(\log\frac{e}{1-|a|}\right),\quad |a|\to 1^-.
\end{split}
\end{equation*}
\par In order to prove \eqref{eq:ga2}, we first estimate the $B_ {p, \log^{p/2}}$-norm of the functions $g_ a(z)=(1-\bar{a}z)^{-\gamma}$.
Take $a\in\D$ with $|a|\ge 1/2$.
\begin{equation}
\begin{split}\label{gabplog}
\|g_a\|^p_{B_{p,\log^{p/2}}} & \asymp \int_0^1 \frac{(1-s)^{p-2}\left(\log\frac{e}{1-s}\right)^{p/2}\,ds}{(1-|a|s)^{p\gamma+p-1}}
\\
&=  \int_0^{|a|} \frac{(1-s)^{p-2}\left(\log\frac{e}{1-s}\right)^{p/2} ds}{(1-|a|s)^{p\gamma+p-1}}
 +
 \int_{|a|}^1 \frac{(1-s)^{p-2}\left(\log\frac{e}{1-s}\right)^{p/2} ds}{(1-|a|s)^{p\gamma+p-1}}
\\
 & = I_1(|a|)+I_2(|a|).
\end{split}
\end{equation}
\par Since $(1-s)^{p-2}\left(\log\frac{e}{1-s}\right)^{p/2}$ is an admissible weight
\begin{equation}
\begin{split}\label{gabplog2}
I_2(|a|)&\asymp \frac{1}{ (1-|a|)^{p\gamma+p-1}}\int_{|a|}^1(1-s)^{p-2}\left(\log\frac{e}{1-s}\right)^{p/2}\,ds
\\
& \asymp  \frac{1}{ (1-|a|)^{p\gamma}}\left(\log\frac{e}{1-|a|}\right)^{p/2}\quad |a|\to 1^-.
\end{split}
\end{equation}
Moreover,
\begin{equation*}
\begin{split}
I_1(|a|)\le & \left(\log\frac{e}{1-|a|}\right)^{p/2}\int_0^{|a|} \frac{(1-s)^{p-2}\,ds}{(1-|a|s)^{p\gamma+p-1}}
\\
& \le C \left(\log\frac{e}{1-|a|}\right)^{p/2}\frac{1}{ (1-|a|)^{p\gamma}}.
\end{split}
\end{equation*}
which together with \eqref{gabplog} and \eqref{gabplog2} gives
\begin{equation}
\begin{split}\label{gabplog3}
\|g_a\|_{B_{p,\log^{p/2}}}\asymp (1-|a|)^{-\gamma}\left(\log\frac{e}{1-|a|}\right)^{1/2},\quad |a|\to 1^-.
\end{split}
\end{equation}
\par Now, if $1<p\le 2$, by \eqref{gabplog3}, the description of Hilbert-Schmidt integration operators obtained in \eqref{HS}, and  part (b) of Theorem \ref{th:3},
\begin{equation}
\begin{split}\label{gasp}
&(1-|a|)^{-\gamma}\left(\log\frac{e}{1-|a|}\right)^{1/2}  \asymp \|g_a\|_{B_{2,\log^{1}}}
\asymp \|T_{g_a}\|_{\mathcal{S}_2(\mathcal{D})}  \le \|T_{g_a}\|_{\mathcal{S}_p(\mathcal{D})}
\\ & \le C\|g_a\|_{B_{p,\log^{p/2}}}
 \asymp (1-|a|)^{-\gamma}\left(\log\frac{e}{1-|a|}\right)^{1/2},\quad |a|\to 1^-.
\end{split}
\end{equation}
Furthermore, if $2\le p<\infty$, using again \eqref{gabplog3} and  Proposition \ref{pr:clasicop>2} below,
\begin{equation*}
\begin{split}
&(1-|a|)^{-\gamma}\left(\log\frac{e}{1-|a|}\right)^{1/2}  \asymp \|g_a\|_{B_{p,\log^{p/2}}}
\le C \|T_{g_a}\|_{\mathcal{S}_p(\mathcal{D})}  \le C \|T_{g_a}\|_{\mathcal{S}_2(\mathcal{D})}
\\
& \asymp \|g_a\|_{B_{2,\log^{1}}}
\asymp (1-|a|)^{-\gamma}\left(\log\frac{e}{1-|a|}\right)^{1/2},\quad |a|\to 1^-,
\end{split}
\end{equation*}
and this completes the proof of (b).
\end{proof}
Bearing in mind that $\left(\Xpa,||\cdot||_{\Xpa}\right)$ is a Banach space for $p>1$, the closed graph theorem and Lemma \ref{le:normagkxpa} and Lemma \ref{le:ga}, we deduce that $X^p_0\subsetneq B_p$ and is different from $B_{p,\log^{p/2}}$. In particular, Proposition \ref{pr:xpaproperties} (iv) does not remain true for $\a=0$ and $1<p<2$.

\subsection{Case $\mathbf{p>2}$}
We collect our results for this range of values of $p$ in the next proposition.
\begin{prop}\label{pr:clasicop>2}
Assume that $g\in H(\D)$ and $2\le p<\infty$.
\begin{enumerate}
\item[\rm(i)]\,If  $T_ g\in \mathcal{S}_ p(\mathcal{D})$ then $g\in B_{p,\log^{p/2}}$.
\item[\rm(ii)]\,If  $T_ g\in \mathcal{S}_ p(\mathcal{D})$ then $g\in X^p_0$.
\item[\rm(iii)]\, Assume that $2< p\le 4$.
 If $$||g||^p_{X^p_{0,\log^{p/4}}}\!\!\ig |g(0)|^p+\int_{\D}\!\left (\int_{\D}\!\frac{|g'(z)|^2}{|1-\bar{w}z|^{2}}
dA(z)\!\right)^{p/2}\!\!\left(\!\log\frac{e}{1-|w|}\!\right)^{p/4}\!\!\!dA_{p-2}(w)<\infty,$$
then
$T_ g\in \mathcal{S}_ p(\mathcal{D})$.
\end{enumerate}
\end{prop}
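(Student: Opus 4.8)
The plan is to treat the three parts separately, since (i) and (ii) are necessity statements (type $p\ge 2$) while (iii) is a sufficiency statement. For parts (i) and (ii) I would invoke Proposition \ref{pr:gen} (i), applied to the operator $T_g:\mathcal{D}\to\mathcal{D}$ with $\alpha=0$: this yields
\[
\int_\D\|T_g(j^0_z)\|_\mathcal{D}^p\,d\lambda(z)\le C\,\|T_g\|_{\mathcal{S}_p}^p.
\]
The normalized function $j^0_z$ is $J^0_z/\|J^0_z\|_\mathcal{D}$ with $\|J^0_z\|_\mathcal{D}=(1-|z|^2)^{-1}$ by \eqref{Id2}. A direct computation of $\|T_g(J^0_z)\|_\mathcal{D}^2=\int_\D|g'(w)|^2|(J^0_z)'(w)|^2\,dA(w)$ using \eqref{FRK} (so that $(J^0_z)'(w)=(1-\bar z w)^{-2}$) shows
\[
\|T_g(j^0_z)\|_\mathcal{D}^2=(1-|z|^2)^2\int_\D\frac{|g'(w)|^2\,dA(w)}{|1-\bar z w|^{4}}.
\]
Comparing this with the integral defining $X^p_0$ in \eqref{eq:xpa} (with $\alpha=0$) immediately gives $\|T_g\|_{\mathcal{S}_p}^p\ge C\|g\|_{X^p_0}^p$, proving (ii). For (i), I would instead bound the inner Bergman-type integral below by restricting integration to a hyperbolic disk $D(z)$ around $z$ and using the subharmonicity of $|g'|^2$ together with the estimate of Lemma \ref{Ict}: since $\int_{D(z)}|1-\bar z w|^{-4}\,dA(w)\asymp (1-|z|^2)^{-2}\cdot(1-|z|^2)^2=1$ — wait, more carefully, one picks up an extra logarithm only via the full integral; the cleanest route is to note $\int_\D|1-\bar z w|^{-4}(1-|w|^2)^0\,dA(w)$ is only comparable to $(1-|z|^2)^{-2}$, so one must localize $g'$ first, getting $\|T_g(j^0_z)\|_\mathcal{D}^2\ge C(1-|z|^2)^2\cdot|g'(z)|^2\cdot\log\frac{e}{1-|z|^2}\cdot(1-|z|^2)^{-2}=C|g'(z)|^2(1-|z|^2)^{0}\log\frac{e}{1-|z|^2}$; adjusting the localization radius and combining with $(1-|z|^2)^{p-2}$ from $d\lambda$ reproduces exactly the $B_{p,\log^{p/2}}$ integrand. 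This localization, mimicking the proof of Proposition \ref{pr:xpaproperties} (i), is the only slightly delicate point in (i)–(ii).

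For part (iii), where $2<p\le 4$, I would follow the template of the ``Sufficiency, Case $2<p\le 4$'' argument for Theorem \ref{th:nuevo}, specialized to $\alpha=0$. Let $\{e_n\}$ be any orthonormal set in $\mathcal{D}$; split $\sum_n\|T_ge_n\|_\mathcal{D}^p\le C(I_1+I_2)$ exactly as there, with $I_1=\sum_n|e_n(0)|^p\|g\|_\mathcal{D}^p$ (controlled since $X^p_{0,\log^{p/4}}\subset\mathcal{D}$ by the same Fubini argument that showed $X^p_0\subset\mathcal{D}$, and $\sum_n|e_n(0)|^2\le\|K^0_0\|_\mathcal{D}^2=1$), and
\[
I_2=\sum_n\Big(\int_\D|g'(z)|^2\,|e_n^2(z)-e_n^2(0)|\,dA(z)\Big)^{p/2}.
\]
Using the reproducing formula — here in $\mathcal{D}_1=H^2$, since $e_n^2\in\mathcal{D}_1$ follows from the pointwise bound $|e_n(z)|^2\le C\|e_n\|_\mathcal{D}^2\log\frac{e}{1-|z|}$ — one gets $|e_n^2(z)-e_n^2(0)|\le C\int_\D\frac{|e_n(w)||e_n'(w)|}{|1-\bar w z|^2}\,dA_1(w)$. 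Then Fubini and Hölder (exponent $4/p\ge 1$ when $p\le4$, equality at $p=4$) give
\[
J_n(g)^{p/2}\le C\int_\D|e_n(w)|^{p/2}|e_n'(w)|^{2-p/2}\,\Big(\int_\D\frac{|g'(z)|^2\,dA(z)}{|1-\bar wz|^2}\Big)^{p/2}\,dA_{p/2}(w),
\]
and summing in $n$ with Hölder (exponent $4/p$) plus \eqref{eqRK1} yields $\sum_n|e_n(w)|^{p/2}|e_n'(w)|^{2-p/2}\le\|K^0_w\|_\mathcal{D}^{p/2}\|J^0_w\|_\mathcal{D}^{(4-p)/2}\asymp\big(\log\frac{e}{1-|w|^2}\big)^{p/4}(1-|w|^2)^{-(4-p)/2}$ by \eqref{RKDA} and \eqref{Id2}. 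Substituting, the weight becomes $dA_{p/2}(w)$ times $(1-|w|^2)^{-(4-p)/2}(1-|w|^2)^{-1}\cdot(\log)^{p/4}$, i.e. total exponent $p/2-(4-p)/2-1=p-3$, which together with the normalizing $(\alpha+1)=1$ and the $dA(w)$ gives $dA_{p-2}(w)(\log\frac{e}{1-|w|})^{p/4}$ — precisely $\|g\|_{X^p_{0,\log^{p/4}}}^p$. Then $\sum_n\|T_ge_n\|_\mathcal{D}^p\le C\|g\|_{X^p_{0,\log^{p/4}}}^p<\infty$, and \cite[Theorem 1.33]{Zhu} gives $T_g\in\mathcal{S}_p(\mathcal{D})$.

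The main obstacle I anticipate is bookkeeping the exponents in part (iii) so that the logarithmic factor $\big(\log\frac{e}{1-|w|^2}\big)^{p/4}$ lands with exactly the right power: the factor $\|K^0_w\|_\mathcal{D}^{p/2}\asymp(\log\frac{e}{1-|w|^2})^{p/4}$ must be produced from the Hölder split of $\sum_n|e_n(w)|^{p/2}|e_n'(w)|^{2-p/2}$, and one must check that no further integration in $w$ against $|1-\bar w z|^{-2}$ (which would generate an extra logarithm via Lemma \ref{Ict}) is needed — indeed in the $\alpha=0$ case the final step is a direct pointwise substitution rather than a further Fubini/Lemma \ref{Ict} application, so the logarithm is not duplicated. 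The restriction $p\le 4$ is exactly what makes the Hölder exponent $4/p$ at least $1$ and keeps $|e_n'|^{2-p/2}$ with a nonnegative power so that \eqref{eqRK1} applies; for $p>4$ one would need the more elaborate iteration of the Theorem \ref{th:nuevo} proof, which is why the statement is confined to $2<p\le4$.
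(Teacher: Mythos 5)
Your parts (ii) and (iii) follow the paper's own route: (ii) is exactly Proposition \ref{Pr4}(ii) with $\alpha=0$ (i.e.\ Proposition \ref{pr:gen} applied to $T_g$), and (iii) is the sufficiency argument of Theorem \ref{th:nuevo} for $2<p\le 4$ run with $\alpha=0$, the only change being that $\|K^0_w\|^2_{\mathcal D}\asymp\log\frac{e}{1-|w|^2}$ now produces the factor $\left(\log\frac{e}{1-|w|}\right)^{p/4}$ -- precisely what the paper indicates. Two computations need repair, though. Since $(T_gf)'=fg'$, one has $\|T_gJ^0_z\|^2_{\mathcal D}=\int_\D|J^0_z(w)|^2|g'(w)|^2\,dA(w)$ with $J^0_z(w)=w/(1-\bar zw)$, so the relevant kernel is $|J^0_z(w)|^2\asymp|1-\bar zw|^{-2}$, not $|(J^0_z)'(w)|^2=|1-\bar zw|^{-4}$ as in your display; it is the corrected identity that matches the $X^p_0$ integrand and makes (ii) immediate, whereas your displayed quantity is a different one. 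Likewise, in (iii) the weight exponent is $p/2-(4-p)/2=p-2$ (there is no extra factor $(1-|w|^2)^{-1}$, so the intermediate ``$p-3$'' is a slip), which does land on the measure $\left(\log\frac{e}{1-|w|}\right)^{p/4}dA_{p-2}(w)$ you state.

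Part (i) contains a genuine gap. Testing $T_g$ on $j^0_z$ and localizing $|g'|^2$ by subharmonicity over $D(z)$ cannot produce the logarithm: $\int_{D(z)}|1-\bar zw|^{-2}\,dA(w)\asymp 1$, so this route only yields $\|T_gj^0_z\|^2_{\mathcal D}\ge C\,|g'(z)|^2(1-|z|^2)^2$, i.e.\ the $B_p$ necessity already contained in (ii) and Proposition \ref{nece}; the logarithmic mass of the kernel comes from scales far from $z$, where subharmonicity gives no lower bound on $|g'|$. Indeed the pointwise inequality your argument needs, $\|T_gj^0_z\|^2_{\mathcal D}\ge C\,|g'(z)|^2(1-|z|^2)^2\log\frac{e}{1-|z|^2}$, is false: for fixed $z$ take $g'(w)=(1-|z|^2)^2(1-\bar zw)^{-3}$; then $|g'(z)|\asymp(1-|z|^2)^{-1}$, while by Lemma \ref{Ict} one gets $\|T_gj^0_z\|^2_{\mathcal D}\asymp(1-|z|^2)^{2}\,(1-|z|^2)^{4}\,(1-|z|^2)^{-6}\asymp 1$, so the claimed lower bound fails by a factor $\log\frac{e}{1-|z|^2}$. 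The paper proves (i) by a different device: $T_g\in\mathcal{S}_p(\mathcal D)$ iff $M^*_{g'}\in\mathcal{S}_p(A^2,\mathcal D)$, and testing $M^*_{g'}$ on the normalized Bergman kernels $b^0_z$ via Proposition \ref{th:general}(i) gives the exact identity $\|M^*_{g'}B^0_z\|^2_{\mathcal D}=|g'(z)|^2\,K^{\mathcal D}_z(z)\asymp|g'(z)|^2\log\frac{e}{1-|z|^2}$: the reproducing property of $B^0_z$ in $A^2$ performs the exact localization at $z$ that subharmonicity cannot, and this is where the $\log^{p/2}$ in $B_{p,\log^{p/2}}$ comes from. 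Some argument of this kind is indispensable for (i).
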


\begin{proof}
\par
We prove part (i) first. It is clear that $T_ g \in \mathcal{S}_ p(\mathcal{D})$ if and
only if $M_{g'}\in \mathcal{S}_ p(\mathcal{D},A^2)$, which is equivalent to
the fact that the adjoint $M^*_{g'}$ belongs to $\mathcal{S}_
p(A^2,\mathcal{D})$. Now, the result can be deduced by applying Proposition \ref{th:general} with $T=M^*_{g'}$ and $H=\mathcal{D}$. Indeed, an easy computation using
\eqref{RKDA},  the properties of the adjoint and the reproducing kernels gives
\begin{displaymath}
\begin{split}
\|M_{g'}^* B^{0}_ z\|_{\mathcal{D}}^2&=\langle M_{g'}^* B^{0}_ z,
M_{g'}^* B^{0}_ z \rangle _{\mathcal{D}}=\langle B^{0}_z, M_{g'}
M_{g'}^* B^{0}_ z \rangle_{A ^2}
=\overline{M_{g'} M_{g'}^* B^{0}_ z (z)}
\\
&=\overline{g'(z)\, M_{g'}^* B^{0}_ z (z)}
 =\overline{g'(z)\, \langle M_{g'}^* B^{0}_ z, K^{\mathcal{D}}_
z\rangle _{\mathcal{D}} }
= \overline{g'(z)\, \langle
B^{0}_ z, M_{g'} K^{\mathcal{D}}_ z\rangle _{A^2}}
\\
&=\overline{g'(z)}\,\, M_{g'} K^{\mathcal{D}}_
z(z)=|g'(z)|^2\,\log \frac{e}{1-|z|^2}.
\end{split}
\end{displaymath}
This, bearing in mind \eqref{RKBergman}, yields
\begin{displaymath}
\begin{split}
\int_{\D}
|g'(z)|^p\,\left (\log \frac{e}{1-|z|^2}\right)^{p/2} \!(1-|z|^2)^{p-2}\,dA(z)&\asymp \int_{\D}\|M_{g'}^* b_ z^{0}\|_{\mathcal{D}}^p\,d\lambda (z),
\end{split}
\end{displaymath}
which together with Proposition \ref{th:general}, gives (i). Part (ii) follows from Proposition \ref{Pr4}. Finally, reasoning as in the proof of Theorem \ref{th:nuevo} (case $2<p\le 4$), we obtain
part (iii).
\end{proof}
\par By arguing
now similarly as
 in the proof of Lemma \ref{le:ga}, we deduce that
\begin{displaymath}
||g_ a||_{X^p_{0,\log^{p/4}}}\asymp (1-|a|)^{-\gamma}\left(\log\frac{1}{1-|a|}\right)^{\frac{1}{4}+\frac{1}{p}},\quad |a|\to 1^-,
\end{displaymath}
 and Proposition \ref{pr:clasicop>2}, together with  Lemma \ref{le:normagkxpa} and Lemma \ref{le:ga}, says that any of those conditions which appear in
Proposition \ref{pr:clasicop>2} does not describe the membership of $T_g$ in $S_p(\mathcal{D})$ for $p>2$. However,
 if the monomials are taken as the symbols, Lemma \ref{le:normagkxpa} says that the correct behavior of $\|T_ g\|_{\mathcal{S}_ p(\mathcal{D})}$ is given by $X^p_0$, but if the symbols are the family of functions  $g_ a(z)=(1-\bar{a}z)^{-\gamma}$, $a\in\D$, Lemma \ref{le:ga} says that the correct behavior is   given by the $B_{p,\log^{p/2}}$ condition.

\section{Relationship with other operators}\label{operators}
It should be noticed that the integration operator $T_ g$ is bounded, compact (in $\Da$), or belongs to the Schatten class $\mathcal{S}_ p(\Da)$ if and only if the multiplication operator $M_{g'}:\Da\rightarrow A^2_{\alpha}$ is bounded, compact, or belongs to $\mathcal{S}_ p$. In this section, we shall study the relationship of the integration operator $T_ g$ (equivalently $M_{g'}$) with other linear operators such as Toeplitz operators, the big and small Hankel operators, or other multiplication operators.
\subsection{Toeplitz operators}
We recall that given a finite positive Borel measure $\mu$ on $\D$, the Toeplitz operator $Q_{\mu}$ on $\Da$, $\alpha>0$ is defined by
\[Q_{\mu} f(z)=\int_{\D}f(w)\,\overline{K^{\alpha}_ z(w)}\,d\mu(w),\qquad f\in \Da.\]
  Toeplitz operators  have been a key tool for studying the membership in $\mathcal{S}_p$  of many classes of operators, such as composition operators (see \cite{LuZhu92}, \cite[Section $7$]{LJFA87} and \cite[Chapter 11]{Zhu}) or integration operators (see \cite{AS0, AS} and \cite[Chapter $6$]{PelRat}). Indeed,
  the integration operator $T_ g$ and the Toeplitz operator $Q_{\mu}$ on $\Da$ are related via the identity $T^*_ g T_ g=Q_{\mu_ g}$, where  $\mu_ g$ is the measure defined by $d\mu_ g(z)=|g'(z)|^2\,dA_{\alpha}(z)$, and one can obtain a proof of Theorem \ref{th:1} using the characterization of Schatten class Toeplitz operators obtained by D. Luecking (see \eqref{LC} below). So, it is natural to expect that the methods used to study the membership of $T_{g}$ in the Schatten $p$-class of $\Da$ are going to work also for the Toeplitz operator $Q_{\mu}$ on $\Da$ for a general measure $\mu$. Before doing that,  we recall Luecking's result \cite{LJFA87} describing the membership in $\mathcal{S}_ p(\Da)$ of the Toeplitz operator $Q_{\mu}$ for all $p>0$ with $p(1-\alpha)<1$. He shows that, for the range of $p$ considered above, $Q_{\mu}\in \mathcal{S}_ p(\Da)$ if and only if, for any $r$-lattice $\{a_ j\}$ with associated hyperbolic disks $\{D_ j\}$
\begin{equation}\label{LC}
\sum_ j \left (\frac{\mu(D_ j)}{(1-|a_ j|)^{\alpha}}\right )^p<\infty.
\end{equation}
Given a finite positive Borel measure on $\D$,  for any $-1< \a<\infty$ and $0<p<\infty$  we define
 \begin{equation*}\label{eq:xpmu}
X^p_{\alpha}(\mu) \ig \int_{\D}\!\left ((1-|w|^2)^\alpha\int_{\D}\frac{d\mu(z)}{|1-\bar{w}z|^{2+2\a}}\!\right)^{p/2}\!\!\!\!(1-|w|^2)^{p-2}dA(w).
\end{equation*}
 Here we are able to obtain a full description of the measures $\mu$ for which the Toeplitz operator $Q_{\mu}$ belongs to $\mathcal{S}_ p(\Da)$  on  the extended range of all $p>0$ with $p(1-\alpha)<2$ and  $1<p(2+\a)$. We remark here that, as $\alpha>0$,
    a complete description of the Hilbert-Schmidt Toeplitz operators on $\Da$ is obtained.
\begin{thm}\label{th:TC}
Let $\mu$ be a finite positive Borel measure on $\D$, $\alpha>0$, and let $p>0$ with $1<p(2+\a)$ and  $p(1-\alpha)<2$. Then the Toeplitz operator $Q_{\mu}$ belongs to $\mathcal{S}_ p(\Da)$ if and only if $X_{\alpha}^{2p}(\mu)<\infty$.
\end{thm}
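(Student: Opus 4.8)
The plan is to pass from $Q_\mu$ to the embedding operator $I_\mu$ and then treat the ranges $p\le 1$ and $p>1$ by different means. Write $I_\mu\colon\Da\to L^2(\mu)$ for the inclusion $I_\mu f=f$ (bounded precisely when $\mu$ is a Carleson measure for $\Da$; if it is not, then $X^{2p}_\alpha(\mu)=\infty$ and both sides of the equivalence fail, so we may assume it is). A reproducing-kernel computation gives $\langle Q_\mu f,h\rangle_{\Da}=\int_\D f\bar h\,d\mu=\langle I_\mu f,I_\mu h\rangle_{L^2(\mu)}$, hence $Q_\mu=I_\mu^*I_\mu$ and $Q_\mu\in\mathcal S_p(\Da)$ if and only if $I_\mu\in\mathcal S_{2p}(\Da,L^2(\mu))$, with $\|Q_\mu\|_{\mathcal S_p}^p=\|I_\mu\|_{\mathcal S_{2p}}^{2p}$. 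Setting $q=2p$, the hypotheses become $q(2+\alpha)>2$ and $q(1-\alpha)<4$, and it suffices to prove $I_\mu\in\mathcal S_q(\Da,L^2(\mu))\Leftrightarrow X^q_\alpha(\mu)<\infty$. I would also record the lattice discretisation $X^q_\alpha(\mu)\asymp\sum_j b_j^{q/2}$ over an $r$-lattice $\{a_j\}$, where $b_j=(1-|a_j|^2)^{2+\alpha}\int_\D|1-\bar a_jz|^{-(2+2\alpha)}\,d\mu(z)$, obtained exactly as in Proposition~\ref{pr:xpaproperties}(iv) from $1-|w|^2\asymp1-|a_j|^2$ and $|1-\bar wz|\asymp|1-\bar a_jz|$ for $w\in D_j$.

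For $0<p\le1$ the plan is to route everything through Luecking's criterion. Since $\alpha>0$ we have $p(1-\alpha)\le p<1$, so \eqref{LC} holds: $Q_\mu\in\mathcal S_p(\Da)$ if and only if $\sum_j(\mu(D_j)(1-|a_j|^2)^{-\alpha})^p<\infty$. It then remains to check that $\sum_j b_j^{p}\asymp\sum_j(\mu(D_j)(1-|a_j|^2)^{-\alpha})^p$ (recall $q/2=p$ and $X^{2p}_\alpha(\mu)\asymp\sum_j b_j^{p}$). The lower estimate follows from the diagonal term of $b_j$; for the upper one writes $b_j\asymp\sum_k(1-|a_j|^2)^{2+\alpha}|1-\bar a_ja_k|^{-(2+2\alpha)}\mu(D_k)$, applies the subadditivity of $t\mapsto t^p$ ($p\le1$) and Fubini, and then invokes the lattice analogue of Lemma~\ref{Ict}, $\sum_j(1-|a_j|^2)^{(2+\alpha)p}|1-\bar a_ja_k|^{-(2+2\alpha)p}\asymp(1-|a_k|^2)^{-\alpha p}$, which requires $(2+\alpha)p>1$ --- precisely the hypothesis $p(2+\alpha)>1$, and the only place it enters. (For $p=1$ this also gives, directly by Fubini and Lemma~\ref{Ict}, the Hilbert--Schmidt description $X^2_\alpha(\mu)\asymp\int_\D(1-|z|^2)^{-\alpha}\,d\mu$.)

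For $p>1$ (so $q>2$) I would adapt the two halves of the proof of Theorem~\ref{th:nuevo}. The subtlety for \emph{necessity} is that the normalised kernels $j^\alpha_z$ of $\Da$ vanish at $z=0$ and hence cannot detect mass of $\mu$ carried near interior points (for example $Q_{\delta_0}$ is rank one, while $I_{\delta_0}j^\alpha_z=j^\alpha_z(0)=0$), so Proposition~\ref{pr:gen}(i) cannot be used as it stands; the remedy is to test instead with $\psi_z:=b^{\alpha-1}_z/\|b^{\alpha-1}_z\|_{\Da}$, the normalisation in $\Da$ of the reproducing kernel of the Bergman space $A^2_{\alpha-1}$ (legitimate since $\alpha-1>-1$). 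Because $|b^{\alpha-1}_z(w)|^2=(1-|z|^2)^{1+\alpha}|1-\bar zw|^{-(2+2\alpha)}$ exactly, and $\|b^{\alpha-1}_z\|_{\Da}^2\asymp(1-|z|^2)^{-1}$ by Lemma~\ref{Ict} and \eqref{RKBergman}, one gets $\|I_\mu\psi_z\|_{L^2(\mu)}^2\gtrsim(1-|z|^2)^{2+\alpha}\int_\D|1-\bar zw|^{-(2+2\alpha)}\,d\mu(w)$, hence $X^q_\alpha(\mu)\lesssim\int_\D\|I_\mu\psi_z\|_{L^2(\mu)}^q\,d\lambda(z)$. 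The key point is the identity $\langle b^{\alpha-1}_z,f\rangle_{\Da}=(1-|z|^2)^{(1+\alpha)/2}\,\overline{f(0)+(1+\alpha)zf'(z)}$ (from the reproducing formula for $A^2_\alpha$), which yields at once the uniform bound $\int_\D|\langle\psi_z,e_k\rangle_{\Da}|^2\,d\lambda(z)\le C$ for every orthonormal set $\{e_k\}$ of $\Da$ (expand the square and use $(1-|z|^2)^{2+\alpha}\,d\lambda(z)=(1+\alpha)^{-1}dA_\alpha(z)$); with this, the H\"older argument of Proposition~\ref{pr:gen}(i) carries over verbatim, giving $\int_\D\|I_\mu\psi_z\|_{L^2(\mu)}^q\,d\lambda(z)\le C\|I_\mu\|_{\mathcal S_q}^q$ and hence $X^q_\alpha(\mu)\lesssim\|I_\mu\|_{\mathcal S_q}^q<\infty$. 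For \emph{sufficiency} one copies the sufficiency argument of Theorem~\ref{th:nuevo} with $d\mu$ in place of $|g'(z)|^2\,dA_\alpha(z)$ and $q$ in place of the Schatten index there: for any orthonormal set $\{e_n\}$ of $\Da$ one shows $\sum_n(\int_\D|e_n|^2\,d\mu)^{q/2}\lesssim X^q_\alpha(\mu)$ --- the splitting into $I_1=\mu(\D)^{q/2}\sum_n|e_n(0)|^q\le\mu(\D)^{q/2}\lesssim X^q_\alpha(\mu)$ (compare with the part of the defining integral of $X^q_\alpha(\mu)$ over $\{|w|<1/2\}$) and $I_2=\sum_n(\int_\D|e_n^2(z)-e_n^2(0)|\,d\mu(z))^{q/2}$, estimated exactly as in Section~\ref{smain} via the reproducing formula for $\mathcal D_{1+2\alpha}$, Fubini, H\"older, Lemma~\ref{Ict} and the bounds $\sum_n|e_n(w)|^2\le\|K^\alpha_w\|_{\Da}^2$, $\sum_n|e'_n(w)|^2\le\|J^\alpha_w\|_{\Da}^2$ (using $q(1-\alpha)<4$ in the range $q>4$), all go through unchanged --- and then \cite[Theorem $1.33$]{Zhu} gives $I_\mu\in\mathcal S_q$.

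The step I expect to be the main obstacle is the necessity for $p>1$: one must recognise that Proposition~\ref{pr:gen}(i) is genuinely inadequate for a general measure (not merely a technical nuisance), identify the correct test family as the Bergman kernel in the \emph{shifted} weight $\alpha-1$ rather than a kernel of $\Da$ itself, and then establish the uniform $L^2(d\lambda)$-bound for $\langle\psi_z,e_k\rangle_{\Da}$, which hinges on the explicit formula for $\langle b^{\alpha-1}_z,f\rangle_{\Da}$ above. A secondary, essential piece of bookkeeping is to track exactly where each hypothesis is consumed --- $p(2+\alpha)>1$ only in the lattice estimate of the case $p\le1$, and $p(1-\alpha)<2$ (equivalently $q(1-\alpha)<4$) only in the estimate of $I_2$ in the case $p>1$ --- so that the result is proved in precisely the stated range.
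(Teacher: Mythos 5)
Your overall architecture (passing to $I_\mu$ with $Q_\mu=I_\mu^*I_\mu$, doubling the Schatten index, Luecking's criterion plus the lattice discretisation for $p\le 1$, and rerunning the sufficiency proof of Theorem \ref{th:nuevo} with $d\mu$ in place of $|g'|^2dA_\alpha$ for $p>1$) agrees with the paper, and your $p\le 1$ treatment is a valid minor variant (the paper instead gets sufficiency for $p\le1$ from Proposition \ref{pr:gen}(ii)). The genuine gap is in the necessity for $p>1$, i.e.\ $q=2p>2$. Your test family is mishandled: for the kernel $b_z^{\alpha-1}(w)=(1-|z|^2)^{(1+\alpha)/2}(1-\bar zw)^{-(1+\alpha)}$ one has, by Lemma \ref{Ict}, $\|b_z^{\alpha-1}\|_{\Da}^2\asymp(1-|z|^2)^{1+\alpha}\cdot(1-|z|^2)^{-2}=(1-|z|^2)^{\alpha-1}$, not $(1-|z|^2)^{-1}$ as you claim (these agree only when $\alpha=0$). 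With the correct normalisation your key uniform bound fails: since $\langle b_z^{\alpha-1},e\rangle_{\Da}=(1-|z|^2)^{(1+\alpha)/2}\bigl(\overline{e(0)}+(1+\alpha)\bar z\,\overline{e'(z)}\bigr)$, one gets $\int_\D|\langle\psi_z,e\rangle_{\Da}|^2\,d\lambda(z)\asymp\int_\D|e(0)+(1+\alpha)\bar ze'(z)|^2\,dA(z)$, which for the normalised monomials $e(z)=z^n/\|z^n\|_{\Da}$ grows like $n^{\alpha}$, so it is not bounded over orthonormal sets when $\alpha>0$. Worse, the intermediate inequality you want, $\int_\D\|I_\mu\psi_z\|^q_{L^2(\mu)}\,d\lambda(z)\lesssim\|I_\mu\|_{\mathcal{S}_q}^q$, is simply false: for $\mu=\delta_a$ the left side is $\asymp(1-|a|^2)^{-\alpha q}$ while $\|I_{\delta_a}\|_{\mathcal{S}_q}^q\asymp\|K^\alpha_a\|_{\Da}^{q}\asymp(1-|a|^2)^{-\alpha q/2}$. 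So the necessity argument for $p>1$ does not stand as written.

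Moreover, the premise that Proposition \ref{pr:gen}(i) is \emph{genuinely} inadequate is mistaken, and this is exactly how the paper closes the case $p\ge1$. From the explicit formula $J^\alpha_z(w)=\bigl((1-\bar zw)^{-(1+\alpha)}-1\bigr)/\bigl((1+\alpha)\bar z\bigr)$ one gets the pointwise bound $|J^\alpha_z(w)|^2\ge c\,|1-\bar zw|^{-(2+2\alpha)}-C$ for all $z,w\in\D$, so the kernels $j^\alpha_z$ lose only an additive term $C\mu(\D)$; its contribution to $X^{2p}_\alpha(\mu)$ is $\lesssim\mu(\D)^p\int_\D(1-|z|^2)^{(2+\alpha)p-2}\,dA(z)<\infty$ precisely because $p(2+\alpha)>1$ (automatic for $p\ge1$). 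In particular your example $\mu=\delta_0$ causes no trouble: $X^{2p}_\alpha(\delta_0)<\infty$ anyway, so the kernels' blindness near the origin is harmless. If you prefer to keep your shifted kernels, the argument can be repaired by \emph{not} normalising in $\Da$: estimate $\int_\D\|I_\mu b_z^{\alpha-1}\|^q_{L^2(\mu)}(1-|z|^2)^{q/2-2}\,dA(z)$, which equals $X^q_\alpha(\mu)$ up to constants, by the same H\"older scheme, now using the Bessel bound $\sum_k|\langle b_z^{\alpha-1},e_k\rangle_{\Da}|^2\le\|b_z^{\alpha-1}\|_{\Da}^2\asymp(1-|z|^2)^{\alpha-1}$ together with $\int_\D|e(0)+(1+\alpha)\bar ze'(z)|^2(1-|z|^2)^{\alpha q/2}\,dA(z)\le C\|e\|_{\Da}^2$, which holds since $\alpha q/2\ge\alpha$ for $q\ge2$; this yields $X^q_\alpha(\mu)\lesssim\|I_\mu\|_{\mathcal{S}_q}^q$ in the stated range.
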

\begin{proof}
Consider the inclusion operator $I_{\mu}:\Da\rightarrow L^2(\D,\mu)$. It is easy to check that $Q_{\mu}=I_{\mu}^* I_{\mu}$, and thus $Q_{\mu} \in \mathcal{S}_ p(\Da)$ if and only if $I_{\mu}$ belongs to $\mathcal{S}_{2p}$. Now, the necessity of $X_{\alpha}^{2p}(\mu)<\infty$ for $p\ge 1$ and the sufficiency for $p\le 1$ follow from  Proposition \ref{pr:gen}. Also, by repeating the proof of the sufficiency in Theorem \ref {th:nuevo} replacing the measure $|g'(z)|^2 \,dA_{\alpha}(z)$ in that proof by the measure $d\mu$ we obtain
\[\sum_ n \|I_{\mu} e_ n \|_{L^2(\D,\mu)}^{2p}\le C<\infty\]
for all orthonormal sets $\{e_ n\}$ of $\Da$ provided $p> 1$ and $p(1-\alpha)<2$. This proves
the sufficiency of $X_{\alpha}^{2p}(\mu)<\infty$ in that range. Finally, it remains to show the necessity in the case $1/(2+\alpha)<p<1$.  Let $\{a_ j\}$ be an $r$-lattice with associated hyperbolic disks $\{D_ j\}$. Using that $|1-\bar{w}z|\asymp |1-\bar{a}_ j z|$ for $w\in D_ j$ and Lemma \ref{Ict}, we deduce
\begin{displaymath}
\begin{split}
X_{\alpha}^{2p}(\mu) &\le C \int_{\D} \left (\sum_ j \frac{\mu(D_ j)}{|1-\bar{a}_ j z|^{2+2\alpha}}\right )^p (1-|z|^2)^{2p-2+\alpha p}\, dA(z)
\\
& \le C \sum_ j \mu (D_ j)^p \int_{\D} \frac{(1-|z|^2)^{2p-2+\alpha p}}{|1-\bar{a}_ j z|^{2p+2\alpha p}}\,dA(z)
\\
& \le C \sum_ j \frac{\mu (D_ j)^p }{(1-|a_ j|^2)^{\alpha p}}.
\end{split}
\end{displaymath}
Thus, by Luecking's condition \eqref{LC}, if $Q_{\mu}\in \mathcal{S}_ p(\Da)$ then $X_{\alpha}^{2p}(\mu)<\infty$ completing the proof of the Theorem.
\end{proof}
We conclude this subsection mentioning that in \cite{RW1} one can find a description of the membership of the Toeplitz operator $Q_{\mu}$ in $\mathcal{S}_{2k}(\Da)$ for positive integers $k$ in terms of some iterated integrals.
\subsection{Big and small Hankel operators}
As in
\cite{WuIE92} and \cite{RW93}, for $\alpha\ge 0$, we consider the Sobolev space $L^2_{\alpha}$ consisting of those differentiable functions $u:\D\rightarrow \mathbb{C}$ for which the norm
 \[\|u\|_{L^2_{\alpha}}=\left (|u(0)|^2+\int_{\D} |\nabla u(z)|^2\,dA_{\alpha}(z)\right )^{1/2}\]
  is finite. It is clear that $\Da$ is a closed subspace of $L^2_{\alpha}$. Let $P_{\alpha}$ be the orthogonal projection from  $L^2_{\alpha}$ onto $\Da$. The big Hankel operator $H_ g^{\alpha}:\Da \rightarrow L^2_{\alpha}$ and the small Hankel operator $h_ g^{\alpha}:\Da \rightarrow L^2_{\alpha}$ are defined by
\begin{equation}\label{DefHO}
\begin{split}
&H^{\alpha}_ g (f)=(I-P_\alpha)(\overline{g}f),
\\
&h^{\alpha}_g(f)=\overline{P_\alpha(\overline{f}g)}.
\end{split}
\end{equation}
The relation between the big Hankel operator and the multiplication operator $M_{g'}$ is clear and well understood. Indeed,
in \cite[Corollary 1]{WuIE92} Z. Wu shows that $M_{g'}:\Da\rightarrow A^2_{\alpha}$ is bounded, compact, or belongs to $\mathcal{S}_ p$ with $1<p<\infty$, if and only if the same is true for the big Hankel operator $H_{g}^{\alpha}:\Da \rightarrow L^2_{\alpha}$.
 However, although $M_{\bar{g'}}$ is related with the the small Hankel operator (see \eqref{eq:relationmghg} below), the transformation of a result from one  operator to the other is not straightforward. Respect to this question,  it is known
  that  $M_{g'}^{\alpha}:\Da \rightarrow A^2_{\alpha}$ is bounded (or compact) if and only if $h_ g^{\alpha}:\Da \rightarrow L^2_{\alpha}$ is bounded (or compact) (see Theorem 2 and Lemma 3.3 of \cite{Wuark93}). Z. Wu also shows that (in the case of the Dirichlet space) for $p\ge 2$, $H_{g}^{0}:\mathcal{D} \rightarrow L^2_{0}$ belongs to $\mathcal{S}_ p$ if and only if $h_ g^{0}:\mathcal{D} \rightarrow L^2_{0}$ belongs to $\mathcal{S}_ p$ (see \cite[Theorem 6]{WuIE92}. Note that, by the previous observations, we may replace $H_{g}^{0}$ by $M_{g'}$ or $T_ g$). The main aim of this section consists of extending Wu's result on Schatten $p$-classes for the small Hankel operator to all $\Da$ and to all $p$ with $1<p<\infty$. Before that, we recall that
\[P_{\alpha}u(w)=u(0)+\int_{\D}\frac{\partial u}{\partial z}(z)\,\overline{\frac{\partial K_ w^{\alpha}(z)}{\partial z}}\,dA_{\alpha}(z), \]
and has the property (see \cite[p.105]{RW93}) that
\begin{equation}\label{EqS7}
\frac{\partial}{\partial w }\big ( P_{\alpha} u\big )(w)=\int_{\D} \frac{\partial u}{\partial z}(z) \frac{dA_{\alpha}(z)}{(1-\bar{z}w)^{2+\alpha}},\qquad u\in L^2_{\alpha}.
\end{equation}
\begin{thm}\label{th:small Hankel}
Let $\alpha\ge 0$, $g\in H(\D)$ and $1<p<\infty$. Then $T_ g\in \mathcal{S}_ p(\Da)$ if and only if $h_ g^{\alpha}\in \mathcal{S}_ p(\Da, L^2_{\alpha})$.
\end{thm}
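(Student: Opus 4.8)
Here is the approach I would take.

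\smallskip
\noindent\emph{Step 1: reduce both operators to multiplication operators into $A^2_\alpha$.} Recall from the start of Section \ref{operators} that $T_g\in\mathcal S_p(\Da)$ if and only if $M_{g'}:\Da\rightarrow A^2_\alpha$ belongs to $\mathcal S_p$, and that $M_{g'}^*M_{g'}=T_g^*T_g=Q_{\mu_g}$ with $d\mu_g(z)=|g'(z)|^2\,dA_\alpha(z)$. I would give $h_g^\alpha$ an analogous description. Since $h_g^\alpha f=\overline{P_\alpha(\overline f g)}$ is anti-holomorphic, one has $\|h_g^\alpha f\|_{L^2_\alpha}\asymp\|P_\alpha(\overline f g)\|_{\Da}$; differentiating and applying \eqref{EqS7} with $u=\overline f g$, so that $\partial_z u=\overline f\,g'$, gives
\[\bigl(P_\alpha(\overline f g)\bigr)'(w)=\int_{\D}\overline{f(z)}\,g'(z)\,\frac{dA_\alpha(z)}{(1-\bar z w)^{2+\alpha}}=:\Gamma_g f(w),\]
which is exactly the Bergman projection of $\overline f g'$ onto $A^2_\alpha$. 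The remaining datum $f\mapsto\bigl(P_\alpha(\overline f g)\bigr)(0)=\overline{f(0)}\,g(0)$ has rank at most one, and antidifferentiation is an isometry of $A^2_\alpha$ onto $\{h\in\Da:h(0)=0\}$; hence $h_g^\alpha\in\mathcal S_p(\Da,L^2_\alpha)$ if and only if the (conjugate-linear) operator $\Gamma_g:\Da\rightarrow A^2_\alpha$ lies in $\mathcal S_p$, with comparable norms.

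\smallskip
\noindent\emph{Step 2: the easy inclusion.} The implication $T_g\in\mathcal S_p\Rightarrow h_g^\alpha\in\mathcal S_p$ is then immediate and in fact valid for every $p>0$: since the Bergman projection is an orthogonal projection on $L^2(\D,dA_\alpha)$,
\[\|\Gamma_g f\|_{A^2_\alpha}\le\|\overline f g'\|_{L^2(dA_\alpha)}=\|g'f\|_{A^2_\alpha}=\|M_{g'}f\|_{A^2_\alpha},\]
so $0\le\Gamma_g^*\Gamma_g\le M_{g'}^*M_{g'}=Q_{\mu_g}$ as positive operators on $\Da$. By monotonicity of the Schatten classes among positive operators, $M_{g'}^*M_{g'}\in\mathcal S_{p/2}$ forces $\Gamma_g^*\Gamma_g\in\mathcal S_{p/2}$, i.e. $\Gamma_g\in\mathcal S_p$ and $h_g^\alpha\in\mathcal S_p$, with a norm estimate.

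\smallskip
\noindent\emph{Step 3: the converse, and the main obstacle.} The hard part is $h_g^\alpha\in\mathcal S_p\Rightarrow T_g\in\mathcal S_p$, and it is here that the hypothesis $1<p<\infty$ is genuinely used. Computing the matrices of $\Gamma_g$ and $M_{g'}$ in the monomial orthonormal bases of $\Da$ and $A^2_\alpha$, writing $g'=\sum_k c_k z^k$ and using $\|z^k\|_{A^2_\alpha}^2\asymp(k+1)^{-1-\alpha}$ and $\|z^n\|_{\Da}^2\asymp(n+1)^{1-\alpha}$, one finds that $M_{g'}$ has the lower-triangular, weighted \emph{Toeplitz} matrix with entries $\asymp c_{m-n}(n+1)^{-(1-\alpha)/2}(m+1)^{-(1+\alpha)/2}$ (for $m\ge n$, and $0$ otherwise), whereas $\Gamma_g$ has the weighted \emph{Hankel} matrix with entries $\asymp c_{m+n}(m+1)^{(1+\alpha)/2}(n+1)^{-(1-\alpha)/2}(m+n+1)^{-(1+\alpha)}$; the latter is the former, viewed as a submatrix of the bi-infinite weighted Toeplitz matrix with $c_k:=0$ for $k<0$, composed with the reversal of the row index and a harmless factor $\bigl(\tfrac{m+1}{m+n+1}\bigr)^{1+\alpha}\le 1$. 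Passing $\mathcal S_p$-membership from the Hankel-type matrix back to the Toeplitz-type one is precisely what the boundedness of the triangular truncation on $\mathcal S_p$, $1<p<\infty$ (Gohberg--Krein, Macaev), is designed for, and it is this step that fails at the endpoints $p=1,\infty$; the residual weight is controlled by Lemma \ref{Ict} together with a dyadic estimate. An alternative route, which I expect to run in parallel, is to first establish directly that $h_g^\alpha\in\mathcal S_p$ if and only if the big Hankel operator $H_g^\alpha$ does (this extends Wu's Theorem~6 of \cite{WuIE92} from $p\ge 2$, $\alpha=0$ to all $1<p<\infty$, $\alpha\ge 0$) by writing $M_{g'}^*M_{g'}=\Gamma_g^*\Gamma_g+\mathbb H^*\mathbb H$ with $\mathbb H f=(I-P)(\overline f g')$ and exploiting the symmetry $\|(I-P)(\overline f g')\|=\|(I-P)(\overline{g'}f)\|$, and then invoking Wu's Corollary~1 of \cite{WuIE92}, which says $H_g^\alpha\in\mathcal S_p$ iff $M_{g'}\in\mathcal S_p$ iff $T_g\in\mathcal S_p$. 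Either way the crux is this converse direction: $h_g^\alpha$ and $M_{g'}$ carry the \emph{same} symbol but live on matrices of genuinely different type, so there is no elementary domination in that direction, and one must rely on the sharp triangular-truncation theorem, which is exactly the source of the restriction $1<p<\infty$.
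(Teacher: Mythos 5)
Your Steps 1 and 2 are essentially correct. The identification of $\partial_{\bar w}(h_g^\alpha f)$ with the Bergman projection $\widetilde{P_{\alpha}}(\bar f\,g')$ is exactly the paper's use of \eqref{EqS7}, and the domination $\|\widetilde{P_{\alpha}}(\bar f g')\|_{A^2_\alpha}\le\|\bar f g'\|_{L^2(dA_\alpha)}=\|M_{g'}f\|_{A^2_\alpha}$ combined with Weyl monotonicity of eigenvalues for positive compact operators does give the implication $T_g\in\mathcal S_p(\Da)\Rightarrow h_g^\alpha\in\mathcal S_p$, and indeed for every $p>0$; this is a clean, slightly more general argument for that direction than the paper's, which treats both implications at once.

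The genuine gap is in Step 3, which is the substantive direction $h_g^\alpha\in\mathcal S_p\Rightarrow T_g\in\mathcal S_p$. The Hankel-type matrix of $\Gamma_g$ and the triangular Toeplitz-type matrix of $M_{g'}$ are compressions of one bi-infinite weighted Toeplitz matrix to two \emph{disjoint} corners: the Gohberg--Krein--Macaev triangular truncation theorem passes from a matrix you already control to its triangular part, but it can never produce control of one corner from control of a disjoint corner, which is what your argument needs; \lq\lq reversal of the row index\rq\rq\ is only a unitary on the bi-infinite index set, and no hypothesis gives you the full bi-infinite operator in $\mathcal S_p$. Moreover your \lq\lq harmless factor\rq\rq\ $\left(\frac{m+1}{m+n+1}\right)^{1+\alpha}$ is harmless only in the entry-shrinking direction: to pass from the matrix of $\Gamma_g$ to the clean Hankel matrix you must multiply by the unbounded pattern $\left(\frac{m+n+1}{m+1}\right)^{1+\alpha}$, which is not a bounded Schur multiplier. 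The alternative route suffers from a similar problem: the identity $M_{g'}^*M_{g'}=\Gamma_g^\star\Gamma_g+\mathbb{H}^\star\mathbb{H}$ with $\mathbb{H}f=(I-\widetilde{P_{\alpha}})(\bar f g')$ is fine, but the claimed symmetry $\|(I-\widetilde{P_{\alpha}})(\bar f g')\|=\|(I-\widetilde{P_{\alpha}})(\overline{g'}f)\|$ is false in general (take $f=1$, $g'(z)=z$: the first norm vanishes, the second does not; the two quantities are distances of the same function to $A^2_\alpha$ and to $\overline{A^2_\alpha}$), and even granting it you would still need precisely \lq\lq small Hankel in $\mathcal S_p$ implies big Hankel in $\mathcal S_p$\rq\rq, which is the crux. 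The paper's proof of the hard direction is different: for $1<p<\infty$, membership of either $T_g$ or $h_g^\alpha$ in $\mathcal S_p$ forces $g\in B_p$ (Propositions \ref{pr:xpaproperties}, \ref{Pr4}, \ref{nece} and Wu's Theorem 1 in \cite{WuIE92}), and for $g'\in A^p_{p-2}$ the difference $M_{\bar g'}-\partial_{\bar w}h_g^\alpha$, which by \eqref{eq:relationmghg} is the conjugate of $\Delta_{g'}$, belongs to $\mathcal S_p$ by Proposition \ref{PH1}; hence the two operators agree modulo $\mathcal S_p$. The restriction $1<p<\infty$ thus enters through the necessity of $g\in B_p$ (which fails for $p\le1$, where $T_g\in\mathcal S_1(\Da)$ forces $g$ to be constant), not through triangular truncation.
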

\begin{proof}
Firstly, we recall  that  if $T_ g$ or $h_ g^{\alpha}$ is bounded, then $g\in \Da$. It is enough to consider  the relationship between $M_{\bar{g'}}$ and $h^{\alpha}_ g$. For this, we look at the difference of $M_{\bar{g'}}$ and $\frac{\partial}{\partial \overline{w}}h_ g^{\alpha}$.
For $f\in \Da$, a straightforward calculation using that $g\in \Da$ and \eqref{EqS7} yields
\begin{equation}\label{eq:relationmghg}
M_{\bar{g'}}f(w)-\frac{\partial}{\partial \overline{w}}(h_ g^{\alpha}\!f)(w)=\overline{\int_{\D}g'(z)\frac{\overline{f(w)}-\overline{f(z)}}{(1-\bar{z}w)^{2+\alpha}}\,dA_{\alpha}(z)}.
\end{equation}
For $1<p<\infty$, if $T_ g \in \mathcal{S}_ p(\Da)$ or $h_ g^{\alpha}\in \mathcal{S}_ p(\Da, L^2_{\alpha})$ then $g\in B_ p$ (see Propositions \ref{pr:xpaproperties}, \ref{Pr4}, \ref{nece}, Theorem \ref{th:3} and \cite[Theorem 1]{WuIE92}), and therefore the difference considered above, as an operator acting from $\Da$ into $L^2(\D,dA_{\alpha})$, belongs to $\mathcal{S}_ p$, by Proposition \ref{PH1} (which we are going to prove below). This completes the proof.
\end{proof}
For $u\in L^2(\D,dA_{\alpha})$, consider the operator
\[\Delta_ u f(w)=\int_{\D}u(z)\frac{\overline{f(w)}-\overline{f(z)}}{(1-\bar{z}w)^{2+\alpha}}\,dA_{\alpha}(z).\]

\begin{prop}\label{PH1}
Let $\alpha\ge 0$, $u\in A^2_{\alpha}$ and $1<p<\infty$.
 If $u\in A^p_{p-2}$, then $\Delta_ u:\Da\rightarrow L^2(\D,dA_{\alpha})$ belongs to $\mathcal{S}_ p$.
\end{prop}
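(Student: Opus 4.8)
The plan is to estimate the Schatten-$p$ quasinorm of $\Delta_u$ acting from $\Da$ into $L^2(\D,dA_\alpha)$ by testing against orthonormal sets and invoking the criteria in \cite[Theorems 1.27 and 1.33]{Zhu}, exactly as in the proof of Theorem \ref{th:nuevo}. First I would note that if $\{e_n\}$ is any orthonormal set in $\Da$, then $e_n'\in A^2_\alpha$ and, by the reproducing formula together with the fact that $\overline{f(w)}-\overline{f(z)}$ kills the constants, one has the pointwise bound
\[
|\Delta_u f(w)|\le C\int_{\D}|u(z)|\,|f(z)-f(w)|\,\frac{dA_\alpha(z)}{|1-\bar z w|^{2+\alpha}}.
\]
Writing $f(z)-f(w)$ as an integral of $f'$ along the segment (or, better, using the reproducing formula for $\mathcal{D}_{\beta}$ with $\beta\ge\alpha$ as in \eqref{eq:j10}), one replaces $|f(z)-f(w)|$ by a double integral of $|e_n'|$ against a kernel, so that everything is controlled by the quantity $\|e_n'\|_{A^2_\alpha}$-type expressions after applying \eqref{eqRK1}.

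The heart of the argument parallels the sufficiency part of Theorem \ref{th:nuevo}: for $p\ge 2$ one estimates $\sum_n\|\Delta_u e_n\|_{L^2(dA_\alpha)}^p$, and for $1<p\le 2$ one estimates $\sum_n|\langle\Delta_u e_n,\sigma_n\rangle|^p$ for orthonormal sets $\{e_n\}\subset\Da$, $\{\sigma_n\}\subset L^2(dA_\alpha)$. In both cases one opens the square (or the $p/2$-power) by H\"older, uses Fubini to move the sum $\sum_n|e_n'(\zeta)|^2\le\|J^{\alpha}_\zeta\|_{\Da}^2\asymp(1-|\zeta|^2)^{-(2+\alpha)}$ inside, and is left with an iterated integral in which the measure $|u(z)|^2\,dA_\alpha(z)$ plays exactly the role that $|g'(z)|^2\,dA_\alpha(z)$ played before. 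Repeated use of Lemma \ref{Ict} (and, where a third point appears, Lemma \ref{LI2}) collapses the kernels and produces a bound of the form
\[
\sum_n\|\Delta_u e_n\|^p_{L^2(dA_\alpha)}\le C\int_{\D}|u(z)|^p\,(1-|z|^2)^{p-2}\,dA(z)=C\,\|u\|^p_{A^p_{p-2}},
\]
since by hypothesis $u\in A^p_{p-2}$. By the Schatten-class membership criteria this gives $\Delta_u\in\mathcal{S}_p$.

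In fact, since the bound should be read as $\|\Delta_u\|_{\mathcal{S}_p}\le C\,\|u\|_{X^p_{\alpha}(|u|^2 dA_\alpha)}$-type, a cleaner route is to reduce directly to Theorem \ref{th:TC} (or rather its proof) applied to the measure $d\mu(z)=|u(z)|^2\,dA_\alpha(z)$: the inclusion $A^p_{p-2}\subset X^p_\alpha$ for the associated measure follows from the same lattice/subharmonicity estimates used in Proposition \ref{pr:xpaproperties}(iv) and in the proof of Theorem \ref{th:TC}, after which one only needs that $\Delta_u$ is dominated in Schatten norm by the inclusion operator $I_\mu:\Da\to L^2(\mu)$ up to harmless kernel manipulations. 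I expect the main obstacle to be the bookkeeping of exponents when one introduces the auxiliary parameter $\beta$ (and possibly an $\varepsilon$) to make the reproducing formula and Lemma \ref{LI2} applicable while keeping all the exponents in the admissible ranges $t<s+2<r$; this is precisely the delicate juggling performed in the case $4<p<\infty$ of Theorem \ref{th:nuevo}, and the same choices should work here since the structure of the kernels is identical. The only genuinely new point is verifying that the extra difference $\overline{f(w)}-\overline{f(z)}$ (rather than just $\overline{f(z)}$) does not worsen the estimates — but this difference only helps, as it removes the constant term and supplies the extra vanishing needed to gain the factor $|1-\bar z w|$ that powers Lemma \ref{Ict}.
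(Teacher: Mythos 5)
Your plan for $p\ge 2$ is essentially workable and is close to the paper's secondary argument: there one splits the kernel by Cauchy--Schwarz with an $\varepsilon$-shift, converts the difference $|e_n(w)-e_n(z)|$ into an integral of $|e_n'|$ via Lemma \ref{LS1}, and sums with \eqref{eqRK1} and Lemma \ref{Ict}. (The paper's primary route for $p\ge2$ is even shorter: since $\Delta_u f=u\bar f-\widetilde{P_\alpha}(u\bar f)$ is the minimal $L^2(dA_\alpha)$-norm solution of $\overline{\partial}v=u\overline{f'}$, the estimate \eqref{Es7d} gives $\|\Delta_u f\|^2_{L^2(dA_\alpha)}\le C\int_\D |u f'|^2(1-|z|^2)^{2+\alpha}\,dA$, after which a single H\"older application and \eqref{Id2} yield $\sum_n\|\Delta_u e_n\|^p\le C\|u\|^p_{A^p_{p-2}}$.) The genuine gap is the range $1<p<2$. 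For $p<2$ the quantity $\sum_n\|Te_n\|^p$ taken over \emph{all} orthonormal sets is infinite for every nonzero operator (spread a fixed singular vector over many $e_n$'s), so no estimate of the form you display can hold uniformly; and the pairing variant $\sum_n|\langle\Delta_u e_n,\sigma_n\rangle|^p$ cannot be closed by your scheme because the target $L^2(\D,dA_\alpha)$ is not a reproducing kernel space: there is no pointwise bound on $\sum_n|\sigma_n(w)|^2$, and the H\"older step that pushes the exponent $p/2<1$ inside the integral is unavailable. Even in the purely RKHS situation of Theorem \ref{th:3}(b) this pairing argument loses a logarithmic factor (it gives $B_{p,\log^{p/2}}$, not $B_p$, when $\alpha=0$). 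The paper handles $1<p<2$ differently: $A^p_{p-2}\subset A^2$ gives the Hilbert--Schmidt case and hence compactness, then Proposition \ref{pr:gen}(ii) reduces matters to $\int_\D\|\Delta_u j_z^\alpha\|^p\,d\lambda(z)<\infty$, which is established via Cauchy--Schwarz with an $\varepsilon$-shift, Lemma \ref{LS1}, Lemma \ref{LI2}, an $r$-lattice decomposition (legitimate here because $p/2\le1$), and Theorem 0 of \cite{AS}. None of these ingredients appears in your outline.

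Your ``cleaner route'' through Theorem \ref{th:TC} with $d\mu=|u|^2\,dA_\alpha$ also does not work for the statement as given. Theorem \ref{th:TC} requires $\alpha>0$ and $p(1-\alpha)<2$, whereas Proposition \ref{PH1} is needed for all $1<p<\infty$ and in particular for $\alpha=0$ (this is exactly the case used in Theorem \ref{th:small Hankel} on the Dirichlet space). Moreover, the inclusion you invoke amounts to $u\in A^p_{p-2}$ implying finiteness of the $X^p_\alpha$-type quantity for $\mu$, i.e.\ to $B_p\subset X^p_\alpha$; by Proposition \ref{pr:xpaproperties}(iv) together with Lemmas \ref{le:normagkxpa} and \ref{le:ga} this fails precisely when $\alpha=0$ or $p(1-\alpha)\ge2$ --- the failure of that inclusion is the raison d'\^etre of the spaces $X^p_\alpha$ --- so the reduction would prove the proposition only under hypotheses strictly stronger than those stated. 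Finally, the claim that $\Delta_u$ is ``dominated in Schatten norm'' by the embedding $I_\mu$ up to kernel manipulations is unsubstantiated: pointwise domination of kernels does not control Schatten norms, and no factorization of $\Delta_u$ through $I_\mu$ is exhibited. The correct mechanism by which ``the difference only helps'' is quantitative, namely Lemma \ref{LS1}, and it is exactly what lets the paper land on the $B_p$-type condition $u\in A^p_{p-2}$ instead of an $X^p_\alpha$-type one.
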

For the proof of that proposition, we need the following lemma.
\begin{lem}\label{LS1}
Let  $\sigma>-1$, and $2+\sigma<b\leq 4+2\sigma$.
Then for each $a\in \D$ and any $f\in H(\D)$ we have
\begin{displaymath}
\int_{\D}\frac{|f(z)-f(a)|^2}{|1-\bar{a}z|^{b}}\,dA_{\sigma}(z)\le C
\int_{\D} |f'(z)|^2\,\frac{dA_{2+\sigma}(z)}{|1-\bar{a}z|^{b}}.
\end{displaymath}
\end{lem}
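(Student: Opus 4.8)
The plan is to split the domain of integration according to a fixed hyperbolic disk around $a$. We may assume the right-hand side is finite (otherwise there is nothing to prove), so that $f'\in A^2_{2+\sigma}$. Fix a small $r_0>0$, set $E:=D(a,r_0)$, and let $\widehat E:=D(a,2r_0)$. I would use the standard facts that $E$ and $\widehat E$ are Euclidean disks whose radii and whose distances to $a$ are all comparable to $1-|a|^2$ (so $a\in E$), that $1-|z|^2\asymp 1-|a|^2\asymp|1-\bar a z|$ and $|z-a|\lesssim 1-|a|^2$ for $z\in\widehat E$, and that $|z-a|\asymp|1-\bar a z|$ for $z\in\D\setminus E$, all constants depending only on $r_0$ (see, e.g., \cite[Chapter 4]{Zhu}); I shall also use the elementary identity $|1-\bar a z|^2=(1-|z|^2)(1-|a|^2)+|z-a|^2$.

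For the integral over $E$ I would argue as follows. Since $E$ is convex, $[a,z]\subset E$ whenever $z\in E$; writing $f(z)-f(a)=(z-a)\int_0^1 f'(a+t(z-a))\,dt$ and bounding $|f'|$ on $E$ by the sub-mean value estimate $\sup_E|f'|^2\le C(1-|a|^2)^{-2}\int_{\widehat E}|f'|^2\,dA$, one gets $|f(z)-f(a)|^2\le C|z-a|^2(1-|a|^2)^{-2}\int_{\widehat E}|f'|^2\,dA$ for $z\in E$. Since $\int_E|z-a|^2\,dA_\sigma(z)\le C(1-|a|^2)^{\sigma+4}$ and $|1-\bar a z|^{-b}\asymp(1-|a|^2)^{-b}$ on $E$, this yields $\int_E\frac{|f(z)-f(a)|^2}{|1-\bar a z|^b}\,dA_\sigma(z)\le C(1-|a|^2)^{\sigma+2-b}\int_{\widehat E}|f'|^2\,dA$, while $(1-|z|^2)^{2+\sigma}|1-\bar a z|^{-b}\asymp(1-|a|^2)^{\sigma+2-b}$ on $\widehat E$ shows that the right-hand side integral of the lemma, restricted to $\widehat E$, is already comparable to the same quantity. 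Hence the $E$-part is dominated by the right-hand side, and no restriction on $b$ enters here.

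The bulk of the proof is the integral over $\D\setminus E$, and this is where I expect the real difficulty. There $|z-a|\asymp|1-\bar a z|$, and Cauchy--Schwarz applied to the segment representation gives $|f(z)-f(a)|^2\le|z-a|^2\int_0^1|f'(a+t(z-a))|^2\,dt$. I would then apply Tonelli's theorem and, for each fixed $t$, the substitution $w=a+t(z-a)$ (so $z=a+(w-a)/t$ and $dA(z)=t^{-2}\,dA(w)$, with $f'(a+t(z-a))=f'(w)$), together with the two elementary bounds, valid for $w$ on $[a,z]$, that $|1-\bar a w|\le 3|1-\bar a z|$ and $1-|z|^2\le t^{-1}(1-|w|^2)$, to reduce the whole expression to a one-variable integral in $t$, taken over the set of $t\in(0,1]$ for which $a+(w-a)/t$ lies in $\D\setminus E$. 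The point of having removed $E$ first is that this constraint keeps the intermediate point $w$ quantitatively away from $a$, which is needed to control the Jacobian $t^{-2}$ and the behaviour of $1-|z|^2$ near the endpoints of the $t$-interval; and the hypotheses $2+\sigma<b\le 4+2\sigma$ are exactly what is needed for the resulting $t$-integral to converge and to leave behind the weight $(1-|w|^2)^{2+\sigma}|1-\bar a w|^{-b}$, in which the right-hand side of the lemma is recognized. This final bookkeeping is the main obstacle. If it turns out to be awkward to carry out directly, an alternative is to decompose, for $z\in\D\setminus E$, $|f(z)-f(a)|^2\le 2|f(z)-f(z^{*})|^2+2|f(z^{*})-f(a)|^2$ with $z^{*}$ the point where $[a,z]$ meets $\partial E$, treat $|f(z^{*})-f(a)|^2$ by the argument of the previous paragraph, and iterate on $|f(z)-f(z^{*})|^2$.
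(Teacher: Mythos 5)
Your handling of the local piece over $E=D(a,r_0)$ is correct, but the far piece---which you yourself single out as the crux---cannot be completed by the mechanism you describe, so there is a genuine gap. The combination of plain Cauchy--Schwarz along the segment, $|f(z)-f(a)|^2\le|z-a|^2\int_0^1|f'(a+t(z-a))|^2\,dt$, with Tonelli and the substitution $w=a+t(z-a)$ loses a full power of $1-|w|^2$. Test it already at $a=0$, where the weight $|1-\bar az|^{-b}$ disappears and the lemma is the standard estimate $\int_\D|f-f(0)|^2\,dA_\sigma\le C\int_\D|f'|^2\,dA_{2+\sigma}$: for fixed $w$ the admissible parameters are $t\in\big(|w|,\min(1,|w|/\rho_0)\big]$ (the lower limit comes from the requirement $z=w/t\in\D$, not from $b$), and for $|w|$ near $1$ the total coefficient your scheme attaches to $|f'(w)|^2$ is $|w|^2\int_{|w|}^{1}t^{-4}(1-|w/t|^2)^{\sigma}\,dt\asymp(1-|w|)^{1+\sigma}$, whereas the lemma requires $(1-|w|)^{2+\sigma}$. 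The resulting inequality is genuinely weaker: for $f'(w)=(1-w)^{-(3+\sigma)/2}$ one has $\int_\D|f'|^2\,dA_{2+\sigma}<\infty$ while $\int_\D|f'(w)|^2(1-|w|)^{1+\sigma}\,dA(w)=\infty$. The loss is caused by applying Cauchy--Schwarz with the uniform measure $dt$, which is far from sharp when $|f'|$ is large near the endpoint $z$; in particular the hypotheses $2+\sigma<b\le4+2\sigma$ are not ``what is needed for the $t$-integral to converge'' (convergence is never the issue), and your fallback---splitting at $z^{*}\in\partial E\cap[a,z]$ and iterating---reproduces the same loss in the term $|f(z)-f(z^{*})|^2$. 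A segment-based proof would need a genuine weighted one-dimensional Hardy-type inequality along each ray from $a$ in place of Cauchy--Schwarz, which is a substantially different argument that your sketch does not supply.

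For comparison, the paper's proof is a short conformal reduction: with $\varphi_a(\zeta)=\frac{a-\zeta}{1-\bar a\zeta}$ and $f_a=f\circ\varphi_a$, the identities $1-\bar a\varphi_a(\zeta)=\frac{1-|a|^2}{1-\bar a\zeta}$ and $1-|\varphi_a(\zeta)|^2=\frac{(1-|a|^2)(1-|\zeta|^2)}{|1-\bar a\zeta|^2}$ turn the left-hand side into $(1-|a|^2)^{2+\sigma-b}\int_\D\frac{|f_a(\zeta)-f_a(0)|^2}{|1-\bar a\zeta|^{4+2\sigma-b}}\,dA_\sigma(\zeta)$, and the hypotheses on $b$ say exactly that the new exponent $4+2\sigma-b$ lies in $[0,2+\sigma)$, which is the range where Lemma 2.1 of \cite{BP} applies; changing variables back gives the right-hand side. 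So the conditions on $b$ enter as the admissible exponent range of that known lemma, not as a convergence condition in a Fubini argument. If you want a self-contained proof, either reprove that lemma (for instance via a reproducing-formula representation of $f_a(\zeta)-f_a(0)$ in terms of $(f_a)'$ combined with the estimates of Lemma \ref{Ict}), or set up a ray-by-ray weighted Hardy inequality; the plain segment/Fubini scheme cannot reach the exponent $2+\sigma$.
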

\begin{proof}
Let $\varphi_ a(z)=\frac{a-z}{1-\bar{a}z}$, and consider the function $f_ a=(f \circ \,\varphi_ a)$. After the change of variables
$z=\varphi_ a(\zeta)$, and an application of Lemma $2.1$ of \cite{BP} we
get
\begin{displaymath}
\begin{split}
\int_{\D}\frac{|f(z)-f(a)|^2}{|1-\bar{a}z|^{b}}\,dA_{\sigma}(z)&=(1-|a|^2)^{2+\sigma-b}\int_{\D}\frac{|f_
a(\zeta)-f_
a(0)|^2}{|1-\bar{a}\zeta|^{4+2\sigma-b}}\,dA_{\sigma}(\zeta)
\\
&\le C (1-|a|^2)^{2+\sigma-b}\int_{\D}\frac{|(f_
a)'(\zeta)|^2\,dA_{2+\sigma}(\zeta)}{|1-\bar{a}\zeta|^{4+2\sigma-b}}.
\end{split}
\end{displaymath}
Finally, the change of variables $\zeta=\varphi_ a(z)$ gives
\begin{displaymath}
\begin{split}
\int_{\D}\frac{|f(z)-f(a)|^2}{|1-\bar{a}z|^{b}}\,dA_{\sigma}(z)
&\le C \!\!\int_{\D}|f'(z)|^2
\frac{|1-\bar{a}z|^{4+2\sigma-b}}{(1-|a|^2)^{2+\sigma}}\,(1-|\varphi_ a(z)|^2)^{2+\sigma} dA(z)
\\
&=C\!\int_{\D}|f'(z)|^2 \,
\frac{(1-|z|^2)^{2+\sigma}}{|1-\bar{a}z|^{b}}\,dA(z).
\end{split}
\end{displaymath}
\end{proof}
\begin{proof}[Proof of Proposition \ref{PH1}]
Firstly we  deal with the case $p\ge 2$. Note that, for $f\in H^{\infty}$  (the algebra of all bounded analytic functions on $\D$, a dense subset of $\Da$) and $u$ analytic, one has $\Delta_ uf=u\bar{f}-\widetilde{P_{\alpha}}(u\bar{f})$, where $\widetilde{P_{\alpha}}$ denotes the Bergman projection from $L^2(\D,dA_{\alpha})$ to $A^2_{\alpha}$. Therefore, $\Delta_{u} f $ is the solution of the equation $\overline{\partial} v=u\overline{f'}$ with minimal $L^2(\D,dA_{\alpha})$ norm. Now, it is well known that the solution of $\overline{\partial} v=u\overline{f'}$ given by
$$v(z)=\int_{\D}
\frac{(u\overline{f'})(w)\,(1-|w|^2)^{1+\alpha}}{(z-w)(1-\bar{w}z)^{1+\alpha}}\,dA(w)$$
satisfies the estimate
\begin{displaymath}
\int_{\D} |v(z)|^2\,dA_{\alpha}(z)\le C
\int_{\D}|(u \overline{f'})(z)|^2\,(1-|z|^2)^{2+\alpha}\,dA(z).
\end{displaymath}
Indeed, the estimate in question follows from Cauchy-Schwarz inequality and the fact that, for $c>0$ and $t>-1$, the integral $\int_{\D}
\frac{(1-|w|^2)^t\,dA(w)}{|z-w|\,|1-\bar{w}z|^{1+t+c}}$ is
comparable to $(1-|z|^2)^{-c}$ (this is just a variant of Lemma \ref{Ict}). Taking all of this into account, we obtain that
\begin{equation}\label{Es7d}
\|\Delta_ u f \|^2_{L^2(\D,dA_{\alpha})}\le C \int_{\D}|u(z)\,f'(z)|^2\,(1-|z|^2)^{2+\alpha}\,dA(z).
\end{equation}
From this inequality, it follows easily that the operator $\Delta_ u$ is bounded (or compact) if $\sup_ {z\in \D}(1-|z|)|u(z)|<\infty$ (or if $\lim_{|z|\rightarrow 1^{-}} (1-|z|)|u(z)|=0$), and it is clear that these conditions are implied by the fact that $u\in A^p_{p-2}$. Now, let
 $\{e_ n\}$ be any orthonormal set in $\Da$. Therefore, using \eqref{Es7d}, H\"{o}lder's inequality, \eqref{eqRK1} and \eqref{Id2}, we obtain
\begin{displaymath}
\begin{split}
\sum_ n \|\Delta_ u e_ n \|^p_{L^2(\D,dA_{\alpha})}&\le C \sum_ n \left ( \int_{\D}|u(z)\,e_ n'(z)|^2\,(1-|z|^2)^{2+\alpha}\,dA(z)\right )^{p/2}
\\
&
\le C \sum_ n \int_{\D}|u(z)|^p\,|e_ n'(z)|^2\,(1-|z|^2)^{p+\alpha}\,dA(z)
\\
&
=C \int_{\D}|u(z)|^p\,\sum_ n |e_ n'(z)|^2\,(1-|z|^2)^{p+\alpha}\,dA(z)
\\
& \le C \|u\|^p_{A^p_{p-2}}.
\end{split}
\end{displaymath}
A different proof for the case $p=2$ (that can be adapted to the case $p>2$) can be given as follows. Let $\{e_ n\}$ be any orthonormal basis of $\Da$. Take $0<\varepsilon<1$. Then, Lemma \ref{LS1} yields
\begin{equation}\label{ES7HS}
\begin{split}
 |\Delta_ u e_ n(w)|^2
&\le  \!\left (\int_{\D} \frac{|u(z)|^2 dA_{\alpha+\varepsilon}(z)}{|1-\bar{w}z|^{2+\alpha}}\right)\!\left (\int_{\D} \frac{|e_ n(w)-e_ n(z)|^2dA_{\alpha-\varepsilon}(z)}{|1-\bar{w}z|^{2+\alpha}}\right)
\\
& \le C \left (\int_{\D} \frac{|u(z)|^2\,dA_{\alpha+\varepsilon}(z)}{|1-\bar{w}z|^{2+\alpha}}\right)\left (\int_{\D} \frac{|e'_ n(z)|^2\,dA_{2+\alpha-\varepsilon}(z)}{|1-\bar{w}z|^{2+\alpha}}\right).
\end{split}
\end{equation}
Therefore, using \eqref{eqRK1} and Lemma \ref{Ict}, we get
\begin{displaymath}
\begin{split}
\sum_ n &\|\Delta_ u e_ n\|^2=\sum_ n \int_{\D} |\Delta_ u e_ n(w)|^2 \,dA_{\alpha}(w)
\\
&\le C \int_{\D} \left (\int_{\D} \frac{|u(z)|^2\,dA_{\alpha+\varepsilon}(z)}{|1-\bar{w}z|^{2+\alpha}}\right)\left (\int_{\D} \frac{\sum_ n |e'_ n(z)|^2\,dA_{2+\alpha-\varepsilon}(z)}{|1-\bar{w}z|^{2+\alpha}}\right)dA_{\alpha}(w)
\\
&\le C \int_{\D} \left (\int_{\D} \frac{|u(z)|^2\,dA_{\alpha+\varepsilon}(z)}{|1-\bar{w}z|^{2+\alpha}}\right)\left (\int_{\D} \frac{dA_{-\varepsilon}(z)}{|1-\bar{w}z|^{2+\alpha}}\right)dA_{\alpha}(w)
\\
&\le C \int_{\D} \left (\int_{\D} \frac{|u(z)|^2\,dA_{\alpha+\varepsilon}(z)}{|1-\bar{w}z|^{2+\alpha}}\right)dA_{-\varepsilon}(w)
\\
&= C \int_{\D} |u(z)|^2\left (\int_{\D} \frac{dA_{-\varepsilon}(w)}{|1-\bar{w}z|^{2+\alpha}}\right)dA_{\alpha+\varepsilon}(z)\le C\,\|u\|_{A^2}^2.
\end{split}
\end{displaymath}
 For $1<p<2$, one has $A^p_{p-2}\subset A^2$. Thus, by the case we have just proved, the operator $\Delta_ u$ is Hilbert-Schmidt and, in particular, compact. By Proposition \ref{pr:gen}, a sufficient condition for $\Delta_ u$ to be in the class $\mathcal{S}_ p$ is
\begin{equation}\label{SCb}
\int_{\D} \|\Delta_ u j^{\alpha}_ z\|_{L^2(\D,dA_{\alpha})}^p \,d\lambda(z)<\infty.
\end{equation}
Now, take $0<\varepsilon<1$ with $\alpha-\varepsilon>-1$ and $p-\varepsilon p>1$. Proceeding as in \eqref{ES7HS}, and then using Lemma \ref{LI2} we obtain
\begin{displaymath}
\begin{split}
|(\Delta_ u J^{\alpha}_ z)(w)|^2 &\le C \left (\int_{\D} \frac{|u(\zeta)|^2\,dA_{\alpha+\varepsilon}(\zeta)}{|1-\bar{w}\zeta|^{2+\alpha-\varepsilon}}\right)\left (\int_{\D} \frac{|(J^{\alpha}_ z)'(\zeta)|^2\,dA_{2+\alpha-\varepsilon}(\zeta)}{|1-\bar{w}\zeta|^{2+\alpha+\varepsilon}}\right)
\\
&\le C \left (\int_{\D} \frac{|u(\zeta)|^2\,dA_{\alpha+\varepsilon}(\zeta)}{|1-\bar{w}\zeta|^{2+\alpha-\varepsilon}}\right)\left (\int_{\D} \frac{dA_{2+\alpha-\varepsilon}(\zeta)}{|1-\bar{z}\zeta|^{4+2\alpha}\,|1-\bar{w}\zeta|^{2+\alpha+\varepsilon}}\right)
\\
&\le C \left (\int_{\D} \frac{|u(\zeta)|^2\,dA_{\alpha+\varepsilon}(\zeta)}{|1-\bar{w}\zeta|^{2+\alpha-\varepsilon}}\right) \frac{(1-|z|^2)^{-\alpha-\varepsilon}}{|1-\bar{w}z|^{2+\alpha+\varepsilon}}.
\end{split}
\end{displaymath}
This, together with Lemma \ref{LI2}, gives
\begin{displaymath}
\begin{split}
\|\Delta_ u & J^{\alpha}_ z\|^2_{L^2(\D,dA_{\alpha})}=\int_{\D} |(\Delta_ u J^{\alpha}_ z)(w)|^2\,dA_{\alpha}(w)
\\
&\le C (1-|z|^2)^{-\alpha-\varepsilon}\!\!\int_{\D} |u(\zeta)|^2 \!\left (\int_{\D}\frac{dA_{\alpha}(w)}{|1-\bar{w}\zeta|^{2+\alpha-\varepsilon}|1-\bar{w}z|^{2+\alpha+\varepsilon}}\right)dA_{\alpha+\varepsilon}(\zeta)
\\
& \le C (1-|z|^2)^{-\alpha-2\varepsilon}\!\int_{\D} \frac{\,|u(\zeta)|^2\, dA_{\alpha+\varepsilon}(\zeta)}{|1-\bar{z}\zeta|^{2+\alpha-\varepsilon}}.
\end{split}
\end{displaymath}
Thus,
\begin{equation}\label{Ep2}
\begin{split}
\int_{\D} \|\Delta_ u & j^{\alpha}_ z\|_{L^2(\D,dA_{\alpha})}^p  d\lambda(z)=\int_{\D} \|\Delta_ u J^{\alpha}_ z\|_{L^2(\D,dA_{\alpha})}^p \|J^{\alpha}_ z\|_{\Da}^{-p}\,d\lambda(z)
\\
&=\int_{\D} \|\Delta_ u J^{\alpha}_ z\|_{L^2(\D,dA_{\alpha})}^p (1-|z|^2)^{(2+\alpha)\frac{p}{2}-2}\,dA(z)
\\
&\le C \!\int_{\D}\left ( \int_{\D} \frac{\,|u(\zeta)|^2\, dA_{\alpha+\varepsilon}(\zeta)}{|1-\bar{z}\zeta|^{2+\alpha-\varepsilon}}\right )^{\frac{p}{2}}(1-|z|^2)^{p-2-\varepsilon p}\,dA(z).
\end{split}
\end{equation}
Now, consider an $r$-lattice $\{a_ n\}$ with associated hyperbolic disks $\{D_ n\}$. Since $p/2\le 1$ we have
\begin{displaymath}
\begin{split}
\left ( \int_{\D} \frac{\,|u(\zeta)|^2\, dA_{\alpha+\varepsilon}(\zeta)}{|1-\bar{z}\zeta|^{2+\alpha-\varepsilon}}\right )^{\frac{p}{2}}
&\le \left ( \sum_ n \int_{D_ n} \frac{\,|u(\zeta)|^2\, dA_{\alpha+\varepsilon}(\zeta)}{|1-\bar{z}\zeta|^{2+\alpha-\varepsilon}}\right )^{\frac{p}{2}}
\\
&\le C\left ( \sum_ n \frac{(1-|a_ n|^2)^{\alpha+\varepsilon}}{|1-\bar{z}a_ n|^{2+\alpha-\varepsilon}}\int_{D_ n} |u(\zeta)|^2\, dA(\zeta)\right )^{\frac{p}{2}}
\\
&\le C\sum_ n \frac{(1-|a_ n|^2)^{(\alpha+\varepsilon)\frac{p}{2}}}{|1-\bar{z}a_ n|^{p+(\alpha-\varepsilon)\frac{p}{2}}}\left (\int_{D_ n} |u(\zeta)|^2\, dA(\zeta)\right )^{\frac{p}{2}}.
\end{split}
\end{displaymath}
Putting this into \eqref{Ep2} and applying Lemma \ref{Ict}, we obtain
\begin{displaymath}
\begin{split}
\int_{\D} \|\Delta_ u & j^{\alpha}_ z\|_{L^2(\D,dA_{\alpha})}^p  d\lambda(z)
\\
& \!\le C \!\sum_ n (1-|a_ n|^2)^{(\alpha+\varepsilon)\frac{p}{2}}
\left (\!\int_{D_ n} \!\! |u(\zeta)|^2\, dA(\zeta)\!\right )^{\frac{p}{2}}\!\!\int_{\D}\!\frac{(1-|z|^2)^{p-2-\varepsilon p}dA(z)}{|1-\bar{z}a_ n|^{p+(\alpha-\varepsilon)\frac{p}{2}}}
\\
& \le C \sum_ n \left (\int_{D_ n}  |u(\zeta)|^2\, dA(\zeta)\right )^{\frac{p}{2}} \le C \|u\|_{A^p_{p-2}}^p
\end{split}
\end{displaymath}
due to Theorem 0 of \cite{AS}. This establishes \eqref{SCb} completing the proof.
\end{proof}

\subsection{Multiplication operators}
It is well known that the multiplication operator $M_{g'}:\Da\rightarrow A^2_{\alpha}$ is bounded or compact if and only if
 $M_{g''}:\Da\rightarrow A^2_{2+\alpha}$ is bounded or compact. Thus, a natural question arises here: It is true that $M_{g'}:\Da\rightarrow A^2_{\alpha}$ is in the Schatten class $\mathcal{S}_ p$ if and only if
 $M_{g''}:\Da\rightarrow A^2_{2+\alpha}$ belongs to $S_
 p$? We are going to see that this happens when $p>1$, but the result is false for $p=1$.
 Let us consider the spaces $\dot{A}^2_{\alpha}=\left\{f\in A^2_\alpha:\,f(0)=0\right\}$ and $\dot{\mathcal{D}}_{\alpha}=\left\{f\in\mathcal{D}_\alpha:\,f(0)=0\right\}$.
\begin{thm}\label{eqSp}
Let $\alpha\ge 0$, $1<p<\infty$ and $g\in H(\D)$. The following are equivalent:
\begin{enumerate}
\item[(a)] $M_{g'}:\dot{\mathcal{D}}_{\alpha}\rightarrow \dot{A}^2_{\alpha}$ is in  $\mathcal{S}_ p$;
\item[(b)] $M_{g''}:\dot{\mathcal{D}}_{\alpha}\rightarrow A^2_{2+\alpha}$ is in  $\mathcal{S}_ p$.
\end{enumerate}
\end{thm}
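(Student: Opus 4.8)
\noindent\emph{Proof strategy.} The plan is to reduce the equivalence to the identity $(g'f)'=g''f+g'f'$, after identifying all three terms with genuine bounded operators, and then to the fact that only $B_p$-symbols make a Bergman multiplication of this type lie in $\mathcal{S}_p$.

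First I record the relevant Hilbert-space identifications. Let $D$ denote differentiation, $Df=f'$. Since $\|f\|_{\Da}^2=|f(0)|^2+\|f'\|_{A^2_\alpha}^2$, the map $D:\dot{\mathcal{D}}_\alpha\to A^2_\alpha$ is a surjective isometry. Because $2+\alpha>1$, one has $\mathcal{D}_{2+\alpha}=A^2_\alpha$ with equivalent norms (the quoted fact $\mathcal{D}_\beta=A^2_{\beta-2}$ for $\beta>1$), so $D:\dot{A}^2_\alpha\to A^2_{2+\alpha}$ is a bounded isomorphism with bounded inverse (antiderivative vanishing at $0$). Hence $M_{g'}:\dot{\mathcal{D}}_\alpha\to\dot{A}^2_\alpha$ lies in $\mathcal{S}_p$ iff $D\circ M_{g'}:\dot{\mathcal{D}}_\alpha\to A^2_{2+\alpha}$ does, and for $f\in\dot{\mathcal{D}}_\alpha$ one has $D(M_{g'}f)=(g'f)'=g''f+g'f'=M_{g''}f+M_{g'}(Df)$. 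The operator $f\mapsto g'f'$ is $M_{g'}\circ D$ with $D:\dot{\mathcal{D}}_\alpha\to A^2_\alpha$ the isometry above, so $M_{g'}\circ D\in\mathcal{S}_p$ iff the Bergman multiplier $M_{g'}:A^2_\alpha\to A^2_{2+\alpha}$ lies in $\mathcal{S}_p$; via $D$ on the target this last operator is $\mathcal{S}_p$-equivalent to $T_g$ acting on $A^2_\alpha=\mathcal{D}_{2+\alpha}$, and by Aleman--Siskakis (the $\beta>1$, $p>1$ case quoted in the introduction) this happens iff $g\in B_p$. Thus $M_{g'}\circ D\in\mathcal{S}_p\iff g\in B_p$.

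The logic now closes once each of (a) and (b) is shown to force $g\in B_p$. For (a) this is already in the paper: $M_{g'}:\dot{\mathcal{D}}_\alpha\to\dot{A}^2_\alpha\in\mathcal{S}_p$ is, up to a rank-one term, equivalent to $T_g\in\mathcal{S}_p(\mathcal{D}_\alpha)$, and Propositions~\ref{Pr4},~\ref{nece}, Theorem~\ref{th:3} and Proposition~\ref{pr:clasicop>2}(i) (for $0\le\alpha\le1$), together with Aleman--Siskakis (for $\alpha\ge1$), give $g\in B_p$. Granting also (b)$\Rightarrow g\in B_p$, we are done: if (a) holds then $g\in B_p$, so $M_{g'}\circ D\in\mathcal{S}_p$, so $M_{g''}=DM_{g'}-M_{g'}\circ D\in\mathcal{S}_p$, which is (b); symmetrically, if (b) holds then $g\in B_p$, so $M_{g'}\circ D\in\mathcal{S}_p$, so $DM_{g'}=M_{g''}+M_{g'}\circ D\in\mathcal{S}_p$, which is (a). (Whichever of (a),(b) is assumed makes the corresponding operator bounded, and $g\in B_p$ makes $M_{g'}\circ D$ bounded, so all three maps in the identity are honest bounded operators into $A^2_{2+\alpha}$.)

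The main obstacle is the implication (b)$\Rightarrow g\in B_p$. Here I would use the equivalent $B_p$-norm $|g(0)|^p+|g'(0)|^p+\int_\D|g''(z)|^p(1-|z|^2)^{2p-2}\,dA(z)$, valid for $p>1$ (see \cite{AFP}, \cite{ZhuJMAA91}); testing $M_{g''}$ on $f(z)=z$ first shows $g\in\Da$, so $M_{g''}$ extends to $\mathcal{D}_\alpha\to A^2_{2+\alpha}$ modulo a rank-one term. For $p\ge2$: since the reproducing kernel of $\dot{\mathcal{D}}_\alpha$ is $K^\alpha_z-1$, one computes $\|(M_{g''})^*B^{2+\alpha}_z\|_{\dot{\mathcal{D}}_\alpha}^2=\overline{(M_{g''}(M_{g''})^*B^{2+\alpha}_z)(z)}=|g''(z)|^2\big(K^\alpha_z(z)-1\big)$, whence after normalization $\|(M_{g''})^*b^{2+\alpha}_z\|^2\asymp|g''(z)|^2(1-|z|^2)^4$ as $|z|\to1^-$ (for $\alpha=0$ an extra factor $\log\frac{e}{1-|z|^2}$ appears, which only helps), and Proposition~\ref{th:general}(i) with parameter $2+\alpha$ gives $\int_\D|g''|^p(1-|z|^2)^{2p-2}\,dA<\infty$, i.e. $g\in B_p$. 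For $1<p<2$: argue as in Proposition~\ref{nece}. Let $M_{g''}^*M_{g''}=\sum_n\lambda_n\langle\cdot,e_n\rangle e_n$ be the canonical decomposition, with $\{e_n\}$ an orthonormal basis of $\dot{\mathcal{D}}_\alpha$ (else $g''\equiv0$). Writing the integrand of $\int_\D|g''|^p|e_n|^p|e_n'|^{2-p}(1-|z|^2)^{p+\alpha}\,dA$ as $\big[|g''|^2|e_n|^2(1-|z|^2)^{2+\alpha}\big]^{p/2}\big[|e_n'|^2(1-|z|^2)^{\alpha}\big]^{(2-p)/2}$, applying H\"older with exponents $\tfrac2p,\tfrac2{2-p}$ and using $\|e_n'\|_{A^2_\alpha}=1$, and then summing in $n$, gives $\int_\D|g''|^p\big(\sum_n|e_n|^p|e_n'|^{2-p}\big)(1-|z|^2)^{p+\alpha}\,dA\le C\sum_n\|M_{g''}e_n\|^p=C\|M_{g''}\|^p_{\mathcal{S}_p}$; Lemma~\ref{le:2} bounds the inner sum below by $c(1-|z|^2)^{p-2-\alpha}$, the weights telescope to $(1-|z|^2)^{2p-2}$, and again $g\in B_p$. (This argument breaks precisely at $p=1$, where the second-derivative characterization of $B_p$ fails — consistent with the failure of the equivalence at $p=1$ mentioned above.)
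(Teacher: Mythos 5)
Your proposal is correct, but it takes a genuinely different route from the paper's. The paper never isolates the operator $f\mapsto g'f'$ as an object in its own right: it works directly with the orthonormal-set criteria for $\mathcal{S}_p$, comparing $\sum_n\|M_{g'}e_n\|^p_{A^2_{\alpha}}$ with $\sum_n\|(M_{g'}e_n)'\|^p_{A^2_{2+\alpha}}$ for $p\ge 2$ and the bilinear sums $\sum_n|\langle (M_{g'}e_n)',f_n'\rangle_{A^2_{2+\alpha}}|^p$ for $1<p<2$, and in both regimes it bounds the cross term coming from $g'e_n'$ by $\|g\|^p_{B_p}$ through a short H\"older argument with \eqref{eqRK1} and \eqref{Id2}; the fact that (a) or (b) forces $g\in B_p$ is simply ``recalled''. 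You instead conjugate by the differentiation isomorphisms $D:\dot{\mathcal{D}}_\alpha\to A^2_\alpha$ and $D:\dot{A}^2_\alpha\to A^2_{2+\alpha}$, so that the product rule $DM_{g'}=M_{g''}+M_{g'}D$ reduces the whole theorem to the single statement $M_{g'}D\in\mathcal{S}_p\Leftrightarrow g\in B_p$, which you get from the Aleman--Siskakis Schatten-class theorem for $T_g$ on Bergman spaces (the $\alpha>1$, $p>1$ case quoted in the introduction, \cite{AS0,AS}) together with the ideal property of $\mathcal{S}_p$; this gives a uniform treatment of all $1<p<\infty$ with no case split in the main reduction, at the price of invoking the full Bergman-space theorem where the paper uses a self-contained estimate. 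A genuine added value of your write-up is that you actually prove (b)$\Rightarrow g\in B_p$ --- via Proposition \ref{th:general}(i) applied to $(M_{g''})^*:A^2_{2+\alpha}\to\dot{\mathcal{D}}_\alpha$ for $p\ge2$, and a Proposition~\ref{nece}-style eigenbasis argument with Lemma \ref{le:2} for $1<p<2$ --- a point the paper leaves as a recollection; both computations check out, including $\|(M_{g''})^*B^{2+\alpha}_z\|^2_{\dot{\mathcal{D}}_\alpha}=|g''(z)|^2\bigl(K^\alpha_z(z)-1\bigr)$. Two details worth making explicit: Lemma \ref{le:2} is stated for orthonormal bases of $\Da$, while your eigenbasis spans only $\dot{\mathcal{D}}_\alpha$ --- appending the constant function $1$ produces a basis of $\Da$ and contributes nothing to $\sum_n|e_n|^p|e_n'|^{2-p}$, so the lower bound still applies; and the equivalence $g\in B_p\Leftrightarrow\int_\D|g''(z)|^p(1-|z|^2)^{2p-2}\,dA(z)<\infty$ for $p>1$ should carry its reference (\cite{ZhuJMAA91}), as you indicate, since it is exactly where the argument breaks at $p=1$, consistently with Theorem \ref{FP}.
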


Taking into account Theorems \ref{th:1} and \ref{th:2}, the next result shows that it is no longer true that $M_{g'}$ being in the trace class $\mathcal{S}_ 1$ is equivalent to $M_{g''}$ being in the trace class. We recall that $g\in B_ 1$ if $g\in H(\D)$ and  $$\int_{\D} |g''(z)|\,\ \!\! dA(z)<\infty.$$
 \begin{thm} \label{FP}
 Let $g\in H(\D)$. Then,
 \begin{enumerate}
 \item[(a)] For $\alpha>0$, $M_{g''}\in \mathcal{S}_ 1 (\Da,A^2_{2+\alpha})$ if and only if $g\in B_ 1$.
 \item[(b)] If $M_ {g''}\in \mathcal{S}_ 1 (\mathcal{D},A^2_ 2)$ then $g\in B_ 1$.
 \item[(c)]  If $$\int_{\D} |g''(z)|\,\Big (\log \frac{e}{1-|z|^2}\Big
 )^{1/2} \!\! dA(z)<\infty,$$ then $M_{g''}\in \mathcal{S}_ 1 (\mathcal{D},A^2_ 2)$.
 \item[(d)] Neither of the two previous implications (b) and (c) can be reversed. Moreover, there is a function $g\in H(\D)$ with $M_{g''}\in \mathcal{S}_ 1 (\mathcal{D},A^2_ 2)$ such that
 $$\int_{\D} |g''(z)|\,\varphi(z)\, dA(z)=\infty$$ for any function $\varphi(r)$ increasing continuously to $\infty$ on $(0,1)$.
\end{enumerate}
 \end{thm}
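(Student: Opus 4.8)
We treat the four parts separately; (a) and (c) are short, (b) is the substantive point, and (d) is a matter of producing examples.

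\smallskip
\textbf{Part (a).} The plan is to reduce to a Toeplitz operator and invoke Luecking's criterion \eqref{LC}. Put $d\nu_g(z)=|g''(z)|^2\,dA_{2+\alpha}(z)$. A direct computation with the inner products shows $M_{g''}^*M_{g''}=Q_{\nu_g}$ on $\Da$, so $M_{g''}\in\mathcal{S}_1(\Da,A^2_{2+\alpha})$ if and only if $Q_{\nu_g}\in\mathcal{S}_{1/2}(\Da)$; since $\tfrac12(1-\alpha)<1$ for every $\alpha>-1$, \eqref{LC} applies and says this happens exactly when $\sum_j\big(\nu_g(D_j)/(1-|a_j|)^{\alpha}\big)^{1/2}<\infty$ for an $r$-lattice $\{a_j\}$. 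It then remains to prove
\[
\sum_j\Big(\tfrac{\nu_g(D_j)}{(1-|a_j|)^{\alpha}}\Big)^{1/2}\asymp\sum_j(1-|a_j|)\Big(\int_{D_j}|g''|^2\,dA\Big)^{1/2}\asymp\int_\D|g''|\,dA,
\]
the first comparison because $\nu_g(D_j)\asymp(1-|a_j|)^{2+\alpha}\int_{D_j}|g''|^2\,dA$, and the second because Cauchy--Schwarz on $D_j$ gives $\int_{D_j}|g''|\,dA\le|D_j|^{1/2}\big(\int_{D_j}|g''|^2\,dA\big)^{1/2}$, while the sub-mean-value inequality for the subharmonic $|g''|$ on a slightly larger disk $\tilde D_j$ gives $\big(\int_{D_j}|g''|^2\,dA\big)^{1/2}\le C(1-|a_j|)^{-1}\int_{\tilde D_j}|g''|\,dA$; the finite-overlap property of the lattice closes the estimate. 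This is exactly $g\in B_1$.

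\smallskip
\textbf{Part (c).} Here I would use that $\|T\|_{\mathcal{S}_1}=\sup\sum_n|\langle Te_n,f_n\rangle|$, the supremum over finite orthonormal systems $\{e_n\}\subset\mathcal{D}$, $\{f_n\}\subset A^2_2$. Two applications of Cauchy--Schwarz give
\[
\sum_n|\langle M_{g''}e_n,f_n\rangle_{A^2_2}|\le\int_\D|g''(z)|\Big(\sum_n|e_n(z)|^2\Big)^{1/2}\Big(\sum_n|f_n(z)|^2\Big)^{1/2}\,dA_2(z),
\]
and \eqref{eqRK1} together with \eqref{RKDA} and \eqref{RKBergman} bounds $\sum_n|e_n(z)|^2\le\|K_z^{\mathcal{D}}\|_{\mathcal{D}}^2\asymp\log\frac{e}{1-|z|^2}$ and $\sum_n|f_n(z)|^2\le\|B_z^2\|_{A^2_2}^2=(1-|z|^2)^{-4}$. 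Since $dA_2(z)\asymp(1-|z|^2)^2\,dA(z)$, the right-hand side is $\le C\int_\D|g''(z)|\big(\log\frac{e}{1-|z|^2}\big)^{1/2}\,dA(z)$, which is finite by hypothesis; hence $M_{g''}\in\mathcal{S}_1(\mathcal{D},A^2_2)$ with the stated norm bound. This runs parallel to the proof of Theorem \ref{th:3}(b).

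\smallskip
\textbf{Part (b).} This is the hard case, and the obstruction is precisely that Luecking's lattice criterion is unavailable: for $\alpha=0$ the kernel of $\mathcal{D}$ grows only logarithmically, the normalized kernels at an $r$-lattice fail to be Bessel (let alone Riesz), and the $\mathcal{D}$-Carleson measures are too large to interact with the $A^2_2$-Carleson measures. The plan is still to prove $\|g\|_{B_1}\le C\|M_{g''}\|_{\mathcal{S}_1}$ by duality. Starting from $M_{g''}K_z^{\mathcal{D}}=g''K_z^{\mathcal{D}}$ and the decomposition $M_{g''}=\sum_n\lambda_n\langle\cdot,e_n\rangle_{\mathcal{D}}f_n$ with $\{e_n\}$ an orthonormal basis of $\mathcal{D}$ (as in Proposition \ref{nece}) and $f_n=g''e_n/\lambda_n$, Parseval gives $\int_\D|g''(w)|^2|K_z^{\mathcal{D}}(w)|^2\,dA_2(w)=\sum_n\lambda_n^2|e_n(z)|^2$. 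Taking square roots, using $\big(\sum_n\lambda_n^2|e_n(z)|^2\big)^{1/2}\le\sum_n\lambda_n|e_n(z)|$, integrating against a test measure $\rho$, and replacing $\big(\int_{D(z,r)}|g''|^2\big)^{1/2}$ by $(1-|z|^2)^{-1}\int_{D(z,r)}|g''|\,dA$ via Cauchy--Schwarz on Bergman disks, one is led (after Fubini) to an estimate of the shape
\[
\int_\D|g''(w)|\,\log\tfrac{e}{1-|w|^2}\,\rho(D(w,r))\,dA(w)\le C\,\|M_{g''}\|_{\mathcal{S}_1}.
\]
The heart of the matter is to choose $\rho$ — necessarily not a finite measure, since $\sum_j1/\log\frac{e}{1-|a_j|}=\infty$ — so that $\log\frac{e}{1-|w|^2}\,\rho(D(w,r))\ge c>0$ while still controlling $\sum_n\lambda_n\int|e_n|\,d\rho$. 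I expect this to force combining the displayed inequality with the Hilbert--Schmidt necessary condition $\int_\D|g''|^2(1-|z|^2)^2\log\frac{e}{1-|z|^2}\,dA<\infty$, which comes for free from $\mathcal{S}_1\subset\mathcal{S}_2$ by an \eqref{HS}-type computation, and with a splitting of $\D$ according to whether $(1-|z|)|g''(z)|$ is large or not; interpolating between these two necessary conditions to recover $\int_\D|g''|\,dA$ is where the real work lies.

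\smallskip
\textbf{Part (d).} For the sharpness statements the plan is to exhibit explicit symbols and compute. To see that (c) is not reversible one needs $g$ with $M_{g''}\in\mathcal{S}_1(\mathcal{D},A^2_2)$ but $\int_\D|g''|(\log\frac{e}{1-|z|^2})^{1/2}\,dA=\infty$; to see that (b) is not reversible one needs $g\in B_1$ with $M_{g''}\notin\mathcal{S}_1(\mathcal{D},A^2_2)$. Both should be obtained from families of the kind already used in the paper — the monomials $g_j(z)=z^j$ and the functions $g_a(z)=(1-\bar az)^{-\gamma}$, or sparse/lacunary series $g''(z)=\sum_kc_kz^{n_k}$ with $n_k$ growing so fast that $M_{g''}$ is essentially block diagonal — by evaluating the $\mathcal{S}_1$-norm (or deciding $\mathcal{S}_1$-membership) directly and comparing with the relevant integrals of $g''$, in the spirit of Lemmas \ref{le:normagkxpa} and \ref{le:ga}; the stronger ``moreover'' assertion is then arranged by tuning the coefficients and exponents so that the mass of $|g''|$ near the boundary defeats every admissible improvement.
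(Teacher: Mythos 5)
Your parts (a) and (c) are fine: (c) is exactly the paper's argument, and (a) via $M_{g''}^*M_{g''}=Q_{\nu_g}$ with $d\nu_g=|g''|^2\,dA_{2+\alpha}$, Luecking's criterion \eqref{LC} with exponent $1/2$ (legitimate, since $\tfrac12(1-\alpha)<1$), and the lattice computation $\sum_j(1-|a_j|)\bigl(\int_{D_j}|g''|^2dA\bigr)^{1/2}\asymp\int_\D|g''|\,dA$ is a correct alternative to the paper's direct proof, modulo the routine remark that one should argue through the embedding operator (as in Theorem \ref{th:TC}) rather than forming $M_{g''}^*$ before boundedness is known.

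The genuine gap is part (b), which you explicitly leave open, and the mechanism you sketch (an auxiliary measure $\rho$, a splitting of $\D$, interpolation between necessary conditions) is not the right one and is not needed. Part (b) only claims the implication $M_{g''}\in\mathcal{S}_1\Rightarrow g\in B_1$, and for this the failure of Luecking's lattice criterion at $\alpha=0$ is irrelevant: the necessity argument you would use for (a) works verbatim at $\alpha=0$. Diagonalize the positive operator $M_{g''}^*M_{g''}=\sum_n\lambda_n\langle\cdot,e_n\rangle_{\mathcal D}\,e_n$; as in Proposition \ref{nece}, $\{e_n\}$ is an orthonormal \emph{basis} of $\mathcal D$ unless $g''\equiv0$. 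Lemma \ref{le:2} with $p=1$, $\alpha=0$ gives the pointwise bound $(1-|z|^2)^{-1}\le C\sum_n|e_n(z)|\,|e_n'(z)|$ --- this uses only $J_z^{0}(z)\asymp(1-|z|^2)^{-1}$, i.e.\ the derivative of the kernel, so the logarithmic growth of $K_z^{\mathcal D}$ never enters. Hence
\[
\int_\D|g''|\,dA\le C\sum_n\int_\D|g''(z)|(1-|z|^2)|e_n(z)||e_n'(z)|\,dA(z)
\le C\sum_n\Bigl(\int_\D|g''|^2|e_n|^2(1-|z|^2)^2\,dA\Bigr)^{1/2}
\]
by Cauchy--Schwarz together with $\|e_n'\|_{A^2}\le1$; the $n$-th summand is comparable to $\|M_{g''}e_n\|_{A^2_2}=\lambda_n^{1/2}$, the $n$-th singular number, so the right-hand side is $C\,\|M_{g''}\|_{\mathcal{S}_1}$. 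That is the whole proof of (b), and the same computation with weight $\alpha>0$ gives the necessity in (a) without Luecking.

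Part (d) in your write-up is only a plan: the test families are the right ones, but none of the quantitative facts is established --- the two-sided estimate $\|M_{g_a''}\|_{\mathcal{S}_1(\mathcal D,A^2_2)}\asymp(1-|a|)^{-\gamma}\bigl(\log\frac{e}{1-|a|}\bigr)^{1/2}$ (upper bound from (c), lower bound from the $\mathcal{S}_2$-norm computed with an orthonormal basis), the singular-value computation $\|M_{z^{j}}\|_{\mathcal{S}_1(\mathcal D,A^2_2)}\asymp j^{-1}$, and the consequent lacunary characterization $M_{g''}\in\mathcal{S}_1\Leftrightarrow\sum_k n_k|a_k|<\infty$, which is what allows one to defeat an arbitrary $\varphi$. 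Note also that to conclude that (b) cannot be reversed from a family lying in both classes one needs the norm discrepancy plus a closed-graph/uniform-boundedness argument, which should be stated explicitly.
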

\par One should compare Theorem \ref{FP} with the results obtained in Theorem 8 of \cite{ARSW2}, where trace class bilinear Hankel forms on the Dirichlet space are studied.
\begin{proof}[Proof of Theorem \ref{eqSp}]
We recall that if (a) or (b) holds, then $g\in B_ p$.
We first deal with the case $p\ge 2$. Since $\|f\|_{{A}^2_{\alpha}}\asymp
\|f'\|_{A^2_{2+\alpha}}$ for $f\in \dot{A}^2_{\alpha}$, then, for any orthonormal set $\{e_ n\}$ of $\dot{\mathcal{D}}_{\alpha}$, we have
\begin{equation}\label{EqMg1}
\sum_ n \|M_ {g'} e_ n\|_{{A}^2_{\alpha}}^p \asymp
\sum_ n \|(M_ {g'} e_ n)'\|_{A^2_{2+\alpha}}^p.
\end{equation}
Note that

\begin{equation}\label{EqMg2}
\begin{split}
\sum_ n \|(M_ {g'} e_ n)'\|_{A^2_{2+\alpha}}^p&\le C \left (\sum_ n \|M_ {g''} e_ n\|_{A^2_{2+\alpha}}^p+\sum_ n \|M_ {g'} e'_ n\|_{A^2_{2+\alpha}}^p\right),
\\
& \text{and}
\\
\sum_ n \|M_ {g''} e_ n\|_{A^2_{2+\alpha}}^p &\le C \left (\sum_ n \|(M_ {g'} e_ n)'\|_{A^2_{2+\alpha}}^p+\sum_ n \|M_ {g'} e'_ n\|_{A^2_{2+\alpha}}^p\right).
\end{split}
\end{equation}
Since $g\in B_ p$, it follows from H\"{o}lder's inequality that
\begin{displaymath}
\begin{split}
\sum_ n \|M_ {g'} e'_ n\|_{A^2_{2+\alpha}}^p&=\sum_ n \left (\int_{\D} |g'(z)|^2\,|e'_ n(z)|^2\,dA_{2+\alpha}(z)\right)^{p/2}
\\
&\le C \sum_ n \int_{\D} |g'(z)|^p\,|e'_ n(z)|^2\,(1-|z|^2)^{p+\alpha}\,dA(z)
\\
&\le C  \int_{\D} |g'(z)|^p\|J^{\alpha}_z\|_{{\mathcal{D}}^2_{\alpha}}^2\,(1-|z|^2)^{p+\alpha}\,dA(z)
\\
&=C  \int_{\D} |g'(z)|^p\,(1-|z|^2)^{p-2}\,dA(z)\le C \|g\|_{B_ p}^p.
\\
\end{split}
\end{displaymath}
From this, \eqref{EqMg1} and \eqref{EqMg2}, it is easy to see that (a) and (b) are equivalent.
\par Now we deal with the case $1<p<2$. Since $\dot{A}^2_{\alpha}$ coincides with ${\dot{\mathcal{D}}}_{2+\alpha}$ with equivalent norms, we will see that (b) is equivalent to $M_{g'}:\dot{\mathcal{D}}_{\alpha} \rightarrow {\dot{\mathcal{D}}}_{2+\alpha}$ being in $\mathcal{S}_ p$. For all orthonormal sets $\{e_ n\}$ of $\dot{\mathcal{D}}_{\alpha}$ and $\{f_ n\}$ of ${\dot{\mathcal{D}}}_{2+\alpha}$, we have that
$$\sum_ n |\langle M_{g'}e_ n,f_ n\rangle_{{{\mathcal{D}}}_{2+\alpha}}|^p\asymp  (II),$$
where
$$(II)=\sum_ n |\langle (M_{g'}e_ n)',f'_ n\rangle_{A^2_{2+\alpha}}|^p.$$

We see that $(II)\le C \left ((IIa)+(IIb)\right)$, where
$$(IIa)=\sum_ n \left|\int_{\D} g'(z)\, e'_ n(z)\,\overline{f'_ n(z)} (1-|z|^2)^2\,dA_{\alpha}(z)\right |^p,$$
and
$$(IIb)=\sum_ n \left|\int_{\D} g''(z)\, e_ n(z)\,\overline{f'_ n(z)} (1-|z|^2)^2\,dA_{\alpha}(z)\right |^p.$$
Next, we are going to see that the term $(IIa)$ is dominated by the $B_ p$ norm of $g$. Indeed, H\"{o}lder's inequality gives
\begin{displaymath}
\begin{split}
(IIa) &\le \sum_ n \left (\int_{\D} |g'(z)|\,|e'_ n(z)|\,|f'_ n(z)|\,(1-|z|^2)^2\,dA_{\alpha}(z) \right )^p
\\
&\le\left ( \sum_ n \int_{\D} |g'(z)|^p \,|e'_ n(z)|^p\,|f'_ n(z)|^{2-p}\,(1-|z|^2)^{2}\,dA_{\alpha}(z)\right) \|f_ n\|_{{\mathcal{D}}_{2+\alpha}}^{p-1}
\\
&\le \int_{\D} |g'(z)|^p \left (\sum_ n|e'_ n(z)|^p\,|f'_ n(z)|^{2-p}\right)(1-|z|^2)^{2}\,dA_{\alpha}(z).
\end{split}
\end{displaymath}
At this point, we use H\"{o}lder's inequality again together with \eqref{eqRK1} and \eqref{Id2} to obtain that
\begin{displaymath}
\begin{split}
(IIa) & \le \int_{\D} |g'(z)|^p \Big (\sum_ n|e'_ n(z)|^2 \Big)^{p/2} \Big (\sum_ n|f'_ n(z)|^2 \Big)^{\frac{2-p}{2}}(1-|z|^2)^{2}\,dA_{\alpha}(z)
\\
& \le \int_{\D} |g'(z)|^p \,\|J^{\alpha}_ z \|_{\mathcal{D}_{\alpha}}^p\,\|J^{2+\alpha}_ z\|_{\mathcal{D}_{2+\alpha}}^{2-p}\,(1-|z|^2)^{2}\,dA_{\alpha}(z)
\\
& =\int_{\D}|g'(z)|^p \,(1-|z|^2)^{-(2+\alpha)\frac{p}{2}}\,(1-|z|^2)^{-(4+\alpha)\frac{2-p}{2}}\,(1-|z|^2)^{2} \,dA_{\alpha}(z)
\\
&\le C \|g\|_{B_ p}^p.
\end{split}
\end{displaymath}
Thus, putting all together, we see that if $g\in B_ p$ then
\begin{displaymath}
\sum_ n |\langle M_{g'}e_ n,f_ n\rangle_{{{\mathcal{D}}}_{2+\alpha}}|^p<\infty \Leftrightarrow \sum_ n  |\langle M_{g''}e_ n,f'_ n\rangle_{A^2_{2+\alpha}} |^p<\infty.
\end{displaymath}
Observe that we have just proved one implication, but the other is proved exactly in the same way.
Finally, it is clear that  $\{f_n\}$ is an orthonormal set of ${\dot{\mathcal{D}}}_{2+\alpha}$ if and only if $\{f'_n\}$ is an orthonormal set of $A^{2}_{2+\alpha}$, which gives  $(a) \Leftrightarrow (b)$.
\end{proof}

 \begin{proof}[Proof of Theorem \ref{FP}]
 If the decomposition of the positive operator
 $M_{g''}^*M_{g''}$ is given by
$\sum_ n \lambda_ n \langle \cdot,e_
n\rangle_{\Da}\, e_ n,$ then, as in the proof of Proposition \ref{nece},  $\{e_ n\}$ is an orthonormal
basis of $\Da$. Thus, by Lemma \ref{le:2}, we have that
$$(1-|z|^2)^{-1-\alpha}\le C\,\sum_ n |e_ n(z)|\,|e_ n'(z)|,$$
and we deduce
\begin{displaymath}
\begin{split}
\int_{\D} |g''(z)|\,dA(z)&\le C \int_{\D} |g''(z)| (1-|z|^2)\left
(\sum_ n |e_ n(z)|\,|e'_ n(z)|\right )\,dA_{\alpha}(z)
\\
&=C\sum_ n \int_{\D} |g''(z)| (1-|z|^2)\, |e_ n(z)|\,|e'_
n(z)|\,dA_{\alpha}(z)
\\
&\le C\sum_ n \left (\int_{\D} |g''(z)|^2\,|e_
n(z)|^2\,(1-|z|^2)^2\,dA_{\alpha}(z)\right )^{1/2}
\\
&=C\sum_ n \|M_{g''}e_ n\|_{A^2_{2+\alpha}}
=C\sum_ n \big(\langle M_{g''}e_ n,M_{g''}e_ n \rangle _{A^2_{2+\alpha}}\big )^{1/2}
\\
&=C\sum_ n |\lambda_ n|^{1/2}=C \|M_{g''}\|_{\mathcal{S}_ 1},
\end{split}
\end{displaymath}
which gives (b) and the necessity in (a).

Now we proceed to show part (c), and the sufficiency in (a). Bearing in mind \eqref{eqRK1} and \eqref{RKBergman}, for all orthonormal sets  $\{e_ n\}$ of $\Da$ and  $\{f_
n\}$ of $A^2_{2+\alpha}$, we have
 \begin{equation}\label{S7Eq1}
\begin{split}
\|M_{g''}\|_{\mathcal{S}_ 1}&\le \sum_ n \big |\langle M_{g''}e_ n,f_ n\rangle_{A^2_{2+\alpha}}\big |
\\
& \le
 \int_{\D} |g''(z)|\left (\sum_ n |e_ n(z)|\,|f_
n(z)|\right ) \,dA_{2+\alpha}(z)
\\
& \le \int_{\D} |g''(z)|\left (\sum_ n |e_ n(z)|^2\right
)^{\!1/2}\!\!\left (\sum_ n |f_ n(z)|^2\right )^{\!1/2}\!\!\! \,dA_{2+\alpha}(z)
\\
& \le \int_{\D} |g''(z)|\, \|K_ z\|_{\Da}\,\|B^{2+\alpha}_ z\|_{A^2_
{2+\alpha}}\,dA_{2+\alpha}(z)
\\
&\asymp \int_{\D} |g''(z)|\,\|K_ z\|_{\Da}\, dA_{\alpha/2}(z)
\end{split}
 \end{equation}
 and, according to \eqref{RKDA}, this is comparable to
 $$\int_{\D} |g''(z)|\,\left (\log \frac{e}{1-|z|^2}\right
 )^{1/2} \!\! dA(z)\qquad \textrm{if}\quad \alpha=0$$
 establishing part (c); and is comparable to
 $$\int_{\D} |g''(z)|\,dA(z) \qquad \textrm{for}\quad \alpha>0$$ that gives the remaining part in (a).

 Now we prove (d). To see that part (b) can not be reversed, consider the functions $g_ a(z)=(1-\bar{a}z)^{-\gamma}$, $\gamma>0$ and $a\in \D$. Then, the same argument leading to \eqref{gabplog3} yields
 \begin{equation}\label{S7Eq2}
 \int_{\D} |g_ a''(z)|\,\left (\log \frac{e}{1-|z|^2}\right
 )^{1/2} \!\! dA(z)\asymp (1-|a|^2)^{-\gamma}\,\left (\log \frac{e}{1-|a|^2}\right
 )^{1/2},
 \end{equation}
 and by what we have just proved (see \eqref{S7Eq1} and the comments after that), one gets
 $$\|M_{g_ a''}\|_{\mathcal{S}_ 1(\mathcal{D},A^2_ 2)} \le C (1-|a|^2)^{-\gamma}\,\left (\log \frac{e}{1-|a|^2}\right
 )^{1/2}.$$
 On the other hand, we can estimate the trace of $M_{g_ a''}$ from below as follows. Let $\{e_ n\}$ be any orthonormal basis of $\mathcal{D}$. Then
 \begin{displaymath}
 \begin{split}
 \|M_{g_ a''}\|_{\mathcal{S}_ 1(\mathcal{D},A^2_ 2)}&\ge \|M_{g_ a''}\|_{\mathcal{S}_ 2(\mathcal{D},A^2_ 2)} =\left (\sum_ n \|M_{g_ a''} e_ n\|_{A^2_ 2}^2\right )^{1/2}
 \\
 &=\left (\sum_ n \int_{\D} |g_ a''(z)|^2 \,|e_ n(z)|^2 \,dA_ 2(z) \right )^{1/2}
 \\
 &=\left ( \int_{\D} |g_ a''(z)|^2 \log \frac{e}{1-|z|^2}\ \,dA_ 2(z) \right )^{1/2}
 \\
 &\ge C (1-|a|^2)^{-\gamma}\,\Big (\log \frac{e}{1-|a|^2}\Big
 )^{1/2}.
 \end{split}
 \end{displaymath}
 All together yields
 $$\|M_{g_ a''}\|_{\mathcal{S}_ 1(\mathcal{D},A^2_ 2)} \asymp (1-|a|^2)^{-\gamma}\,\Big (\log \frac{e}{1-|a|^2}\Big
 )^{1/2}.$$
 Now, from that and \eqref{S7Eq2}, we see that the sufficient condition in part (c) is sharp in a certain sense. Also, since $\|g_ a\|_{B_ 1}\asymp (1-|a|^2)^{-\gamma}$, we see that part (b) can not be reversed.
\par  To see that part (c) can not be reversed, consider a lacunary series $g(z)=\sum_{k=0}^{\infty} a_ k z^{n_ k}$ with $n_{k+1}/n_ k\ge c>1$. Now, we claim that, if $\sum_ k n_ k |a_ k|<\infty$ then $M_{g''}$ belongs to the trace class $\mathcal{S}_ 1(\mathcal{D},A^2_{2})$. Indeed,
 \begin{equation}\label{Mtrace}
 \|M_{g''}\|_{\mathcal{S}_ 1}=\Big\|\sum_ {k=1}^{\infty} n_ k(n_ k-1) a_ kM_{z^{n_{k}-2}}\Big\|_{\mathcal{S}_ 1}\le C\sum_ k n_ k^{2}\, |a_ k| \,\|M_{z^{n_ k-2}}\|_{\mathcal{S}_ 1}.
 \end{equation}
 Since $\{e_ n\}_{n\ge 0}=\big \{\frac{z^n}{(n+1)^{1/2}}\big \}_{n\ge 0}$ is an orthonormal basis of $\mathcal{D}$ and $\{\sigma_ n\}_{n\ge 0}=\{c_ n z^n\}_{n\ge 0}$ with $c_ n\asymp (n+1)^{3/2}$ is an orthonormal basis of $A^2_ 2$, an easy computation gives that, for $f(z)=\sum_{n=0}^{\infty} c_ n z^n$, the multiplication operator $M_{z^j}$ has the decomposition
 $$M_{z^j}(f)=\sum_{n=j}^{\infty}\frac{1}{c_ n (n-j+1)^{1/2}}\, \langle f, e_{n-j}  \rangle _{\mathcal{D}}\,\sigma_ n,$$ and therefore, the singular numbers $\lambda_ n$ satisfy that $\lambda_ n \asymp \sqrt{\frac{1}{(n+1)^3 (n-j+1)}}$. Thus,
 $$\|M_{z^j}\|_{\mathcal{S}_ 1}=\sum_{n\ge j} \lambda_ n \asymp \sum_{n\ge j} \sqrt{\frac{1}{(n+1)^3 (n-j+1)}}\,\asymp\, j^{-1},$$
 and putting this into \eqref{Mtrace} gives
 $$\|M_{g''}\|_{\mathcal{S}_ 1} \le C \sum_ k  n_ k  |a_ k|.$$
This shows together  with part $(b)$ that given a lacunary series $g(z)=\sum_ k a_ k z^{n_ k}$, the multiplication operator $M_{g''}:\mathcal{D}\rightarrow A^2_ 2$ belongs to $\mathcal{S}_ 1$ if and only if $\sum_ k n_ k |a_ k|<\infty$, and it is well known that this condition is equivalent to $g$ being in $B_ 1$
\cite[p. $100$]{Zhu}.
\par Now, given a function $\varphi$ as described in part (d), it is straightforward to select the numbers $\{a_ k\} $ and the sequence $\{n_ k\}$ so that the summability condition $\sum_ k n_ k |a_ k|<\infty$ is met, but $\int_{\D} |g''|\,\varphi \,dA=\infty$.
\end{proof}

\textbf{Acknowledgments:} The authors would like to thank the referee for his/her
comments and suggestions that improved the final version of the paper, and also for pointing out  the relation between the integration operator $T_ g$  and the previous work of A. Calder\'{o}n.


\end{document}